\newtheorem{theorem}{Theorem}[section]
\newtheorem{proposition}[theorem]{Proposition}
\newtheorem{lemma}[theorem]{Lemma}
\newtheorem{assumption}[theorem]{Assumption}
\theoremstyle{remark}
\def\supp{\operatorname{{supp}}}
\numberwithin{equation}{section}
\begin{document}

	\pagenumbering{arabic}	
\title[Delayed KdV-type system]{Stabilization results for delayed fifth order KdV-type equation in a bounded domain}
\author[Capistrano-Filho]{Roberto de A. Capistrano-Filho}
\address{Departamento de Matem\'atica,  Universidade Federal de Pernambuco (UFPE), 50740-545, Recife (PE), Brazil.}
\email{roberto.capistranofilho@ufpe.br}
\author[Gonzalez Martinez]{Victor H. Gonzalez Martinez*}
%\address{Departamento de Matem\'atica,  Universidade Federal de Pernambuco (UFPE), 50740-545, Recife (PE), Brazil.}
\email{victor.hugo.gonzalez.martinez@gmail.com}
\thanks{*Corresponding author: victor.hugo.gonzalez.martinez@gmail.com}
\subjclass[2010]{35Q53, 93D15,93D30,93C20}
\keywords{KdV-type system, Delayed system, Damping mechanism, Stabilization}
%\date{Version 2021-12}

\begin{abstract}Studied here is the Kawahara equation, a fifth order Korteweg-de Vries type equation, with time-delayed internal feedback. Under suitable assumptions on the time delay coefficients we prove that solutions of this system are exponentially stable.  First, considering a damping and delayed system,  with some restriction of the spatial length of the domain, we prove that the Kawahara system is exponentially stable for $T>  T_{\min}$.  After that,  introducing a more general delayed system, and by introducing suitable energies, we show using Lyapunov approach, that the energy of the Kawahara equation goes to zero exponentially,  considering the initial data small and a restriction in the spatial length of the domain. To remove these hypotheses, we use the \textit{compactness-uniqueness argument} which reduces our problem to prove an \textit{observability inequality}, showing a semi-global stabilization result.
\end{abstract}

\maketitle

\section{Introduction\label{intr}}

\subsection{Model under consideration} The full water wave system is too complex to allow to easily derive and rigorously from it relevant qualitative information on the dynamics of the waves. Alternatively, under suitable assumption on amplitude, wavelength, wave steepness and so on, the study on asymptotic models for water waves has been extensively investigated to understand the full water wave system, see, for instance, \cite{ASL,BCL, BLS, Lannes, Saut} and references therein for a rigorous justification of various asymptotic models for surface and internal waves.

Particularly, formulating the waves as a free boundary problem of the  incompressible, \linebreak irrotational Euler equation in an appropriate non-dimensional form, one has two non-dimensional parameters $\delta := \frac{h}{\lambda}$ and $\varepsilon := \frac{a}{h}$, where the water depth, the wave length and the amplitude of the free surface are parameterized as $h, \lambda$ and $a$, respectively. Moreover, another non-dimensional parameter $\mu$ is called the Bond number, which measures the importance of gravitational forces compared to surface tension forces. The physical condition $\delta \ll 1$ characterizes the waves, which are called long waves or shallow water waves, but there are several long wave approximations according to relations between $\varepsilon$ and $\delta$. For example,  when $\varepsilon = \delta^4 \ll 1$, $\mu = \frac13 + \nu\varepsilon^{\frac12}$ and, in connection with the critical Bond number, $\mu = \frac13$, Hasimoto \cite{Hasimoto1970} derived a fifth-order KdV equation of the form
\[\pm2 u_t + 3uu_x - \nu u_{xxx} +\frac{1}{45}u_{xxxxx} = 0,\]
which is nowadays called the Kawahara equation.

Our main focus in this work is to investigate the behavior of the solution of the Kawahara equation \cite{Hasimoto1970,Kawahara}, a fifth higher-order Korteweg-de Vires (KdV) equation
\begin{equation}\label{fda1}
u_{t}+u_{x}+u_{xxx}-u_{xxxxx}+uu_{x}=0
\end{equation}
which is a dispersive PDE describing numerous wave phenomena such as
magneto-acoustic waves in a cold plasma \cite{Kakutani}, the propagation of
long waves in a shallow liquid beneath an ice sheet \cite{Iguchi}, gravity
waves on the surface of a heavy liquid \cite{Cui}, etc. In the literature this
equation is also referred as the fifth-order KdV equation \cite{Boyd}, or
singularly perturbed KdV equation \cite{Pomeau}.

There are some valuable efforts in the last years that focus on the analytical
and numerical methods for solving \eqref{fda1}. These methods
include the tanh-function method \cite{Berloff}, extended tanh-function method
\cite{Biswas}, sine-cosine method \cite{Yusufoglu}, Jacobi elliptic functions
method \cite{Hunter}, direct algebraic method \cite{Polat}, decompositions
methods \cite{Kaya}, as well as the variational iterations and homotopy
perturbations methods \cite{Jin}. 
%For more details see \cite{Bridges, Shuangping, Sirendaoreji, Wazwaz1, DZhang}, among others.  These approaches deal, as a rule, with soliton-like solutions obtained while one considers problems posed on a whole real line. 
For numerical simulations, however, there
appears the question of cutting-off the spatial domain \cite{Bona1, Bona2}.
This motivates the detail qualitative analysis of problems for \eqref{fda1}
in bounded regions.

\subsection{Setting of the problem and main results} Our goal in this manuscript is to analyze qualitative properties of solutions to the
initial-boundary value problem for \eqref{fda1} posed on a bounded interval
under the presence of a localized damping and delay terms, that is
\begin{equation}
\begin{cases}\label{fd1}
u_{t}(x,t)+u_{x}(x,t)+u_{xxx}(x,t)-u_{xxxxx}(x,t)+u(x,t)u_{x}(x,t)\\ \quad  \quad  \quad  \quad  \quad  \quad  \quad  \quad  \quad  \quad \quad  +a\left(x\right)u(x,t) +b(x)u(x,t-h)=0 & x \in (0,L),~ t>0,\\
u\left(  0,t\right)  =u\left(  L,t\right)  =u_{x}\left(  0,t\right)
=u_{x}\left(  L,t\right)  =u_{xx}\left(  L,t\right)  =0 & t>0,\\
u\left(  x,0\right)  =u_{0}\left(  x\right)  & x \in (0,L),\\
u(x,t)=z_0(x,t) & x \in (0,L),~t \in (-h,0),
\end{cases}
\end{equation}
where $h>0$ is the time delay,  $L>0$ is the length of the spatial domain,  $u(x,t)$ is the amplitude of the water wave at position $x$ at time $t$, and $a = a(x)$ and $b = b(x)$ are nonnegative functions belonging to $L^\infty(\Omega)$. For our purpose let us introduce the following assumption.
\begin{assumption}\label{A1}
	The real functions $a=a\left(x\right)$, $b=b\left(x\right)$ are nonnegative functions belonging to $L^\infty(\Omega)$. Moreover, $a(x) \geq a_0>0$ almost everywhere in a nonempty subset $\omega \subset (0,L)$.
\end{assumption}

Note that the term $a(x)u$ designs a feedback damping mechanism (see, for instance \cite{CaArDo}); therefore, one can expect the global well-posedness of \eqref{fd1} for all $L>0$, and the decay of solutions.  Thus, defining the energy of system \eqref{fd1} by
	\begin{equation}\label{Eb}
	E_u(t)=\frac{1}{2}\int_0^L u^2(x,t)dx+\frac{h}{2}\int_0^L\int_0^1 b(x)u^2(x,t-\rho h) d \rho dx,
	\end{equation}
the following questions arise: 
	\begin{itemize}
	\item[i.]Does $E_u(t)\longrightarrow0$, as $t\to\infty$?
	\item[ii.] If it is the case, can we give the decay rate?
	\end{itemize}
So, the main purpose of this paper is to answer these questions. There are basically three features to be emphasized in this way.
\begin{itemize}
\item The damping is effectively important, i.e.
there are solutions to undamped model (at least to its linear version) that do
not decay \cite{CaArDo};

\item The nonlinear term can be estimated in appropriate
norms, i.e. there are suitable functional spaces that allow to apply
corresponding methods;

\item The delay in the feedback does not destabilize the
system, which can be the case for other delayed systems, see for instance \cite{Datko, Nicaise2006, Valein}.
\end{itemize}

Our first result ensures that with a restrictive assumption on the length $L$ of the domain and with the weight of the delayed feedback small enough the solutions of the system  \eqref{fd1} are locally stable. 

\begin{theorem}\label{T1.2J}
Assume that $\supp b(x) \not\subset \supp a(x)$ and the functions satisfying the conditions given in Assumption \ref{A1}. Let $L<\pi \sqrt{3}$, $\xi >1$ and $T_0$ given by
\begin{equation}\label{To}
T_0=\frac{1}{2\gamma}\ln\left( \frac{2\xi \kappa }{\eta}\right)+1.
\end{equation}
Thus, for $s \in [0,T_0)$ with
\begin{equation}\label{Tmin}
T_{\min}:=-\frac{1}{\nu}\ln\left(\frac{\eta}{2}\right)+\left(\frac{2\|b\|_\infty}{\nu}+1\right)s,
\end{equation}
there exists $\delta>0$ (depending on $\xi, L, h$) and $r>0$ sufficiently small such that if $\|b\|_\infty<\delta$, for every $(u_0, z_0) \in\mathcal{H}=L^2(0,L) \times L^2((0,L)\times(0,1))$ satisfying $\|(u_0,z_0)\|_{ \mathcal{H}} \leq r$, the energy \eqref{Eb} of the system \eqref{fd1} decays exponentially for $t>T_{\min}$.
\end{theorem}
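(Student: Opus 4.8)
The strategy is to upgrade the exponential stability of the underlying linear damped--delayed Kawahara flow to the nonlinear problem \eqref{fd1}, treating both the nonlinearity $uu_x$ and the delayed feedback $b(x)u(\cdot,t-h)$ as perturbations that are small --- the former because the data are small, the latter because $\|b\|_\infty<\delta$. I would first rewrite \eqref{fd1} in the usual way as a first order system: setting $z(x,\rho,t)=u(x,t-\rho h)$ on $(0,L)\times(0,1)$, the equation becomes the Kawahara equation for $u$ with forcing $-a(x)u-b(x)z(\cdot,1,t)$ coupled to the transport equation $hz_t+z_\rho=0$ with $z(\cdot,0,t)=u(\cdot,t)$, so that $\mathcal H$ is the natural phase space. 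Multiplying the $u$-equation by $u$, integrating on $(0,L)$ and using the five boundary conditions, all boundary contributions cancel except $-\tfrac12 u_{xx}(0,t)^2$; combining with the time derivative of the delay part of $E_u$ in \eqref{Eb} and handling the crossed term by Young's inequality gives the dissipation relation
\[
\frac{d}{dt}E_u(t)\le -\tfrac12 u_{xx}(0,t)^2-\int_0^L a(x)u^2\,dx+\|b\|_\infty\int_0^L u^2\,dx,
\]
so that $E_u$ is almost nonincreasing, the defect being of order $\|b\|_\infty$.

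The crucial second ingredient is the multiplier $xu$. Since the weight $x$ vanishes at the left endpoint the boundary observation drops out, and integration by parts (using $u(0)=u(L)=u_x(0)=u_x(L)=u_{xx}(L)=0$) yields
\[
\frac12\frac{d}{dt}\int_0^L x u^2\,dx+\frac32\int_0^L u_x^2\,dx+\frac52\int_0^L u_{xx}^2\,dx-\frac12\int_0^L u^2\,dx+\int_0^L x a u^2\,dx+\int_0^L x b\,u(\cdot,t)u(\cdot,t-h)\,dx-\frac13\int_0^L u^3\,dx=0 .
\]
The Poincar\'e inequality $\int_0^L u_x^2\ge(\pi^2/L^2)\int_0^L u^2$ (valid since $u(0)=u(L)=0$) shows $\frac32\int u_x^2-\frac12\int u^2\ge c_0\int u^2$ with $c_0:=\tfrac{3\pi^2}{2L^2}-\tfrac12>0$ \emph{precisely} when $L<\pi\sqrt3$; this is exactly where the length restriction enters. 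Discarding $\frac52\int u_{xx}^2$ and $\int xau^2$, estimating the cubic term by Gagliardo--Nirenberg (absorbed for small data) and the delay term by $\tfrac{L\|b\|_\infty}{2}\big(\|u(\cdot,t)\|^2+\|u(\cdot,t-h)\|^2\big)$, and integrating in time, one bounds $\int_0^T\|u(\cdot,t)\|_{L^2}^2\,dt$ by $\|(u_0,z_0)\|_{\mathcal H}^2$ plus controllable remainders. Together with the near monotonicity from the first step this produces, for the linear flow and $\|b\|_\infty$ small, decay $E_u(T)\le e^{-2\nu T}E_u(0)$, i.e. $\|S(t)\|_{\mathcal L(\mathcal H)}\le\kappa e^{-\gamma t}$ --- the origin of $\gamma,\kappa,\nu$. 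The hypothesis $\supp b\not\subset\supp a$ is harmless because the $xu$-estimate controls $u$ on all of $(0,L)$, not only on $\supp a$.

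It remains to carry the nonlinearity through. Writing the solution of \eqref{fd1} by Duhamel's formula $U(t)=S(t)U_0-\int_0^tS(t-\tau)\big(u(\tau)u_x(\tau),0\big)\,d\tau$ in the class $C([0,T];L^2(0,L))\cap L^2(0,T;H^2(0,L))$ --- the extra regularity being the Kato smoothing of the linear group --- and using a bilinear bound of the type $\int_0^T\|uu_x\|_{L^2}\,dt\le C(T)\big(\|u\|_{C([0,T];L^2)}+\|u\|_{L^2(0,T;H^2)}\big)^2$, one sees that for $\|(u_0,z_0)\|_{\mathcal H}\le r$ small the forcing in the energy inequality for $E_u$ is quadratically small. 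Feeding the linear decay of the previous step into the resulting integral inequality and iterating on consecutive intervals --- with $T_0$ in \eqref{To} chosen so that the gain $\kappa e^{-\gamma T_0}$ beats the loss $\xi$ at each step, and $\delta$ (depending on $\xi,L,h$) so that the $O(\|b\|_\infty)$ defects in both multiplier identities are absorbed --- gives $E_u(t)\le Ce^{-\nu t}E_u(0)$ for $t>T_{\min}$; the term $\big(2\|b\|_\infty/\nu+1\big)s$ in \eqref{Tmin} records the crude Gronwall control of the delayed feedback over the transient window $[0,s]$ before the decay regime sets in, and the freedom $s\in[0,T_0)$ reflects that this window may end at any such time.

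The main obstacle is the interplay of the two multipliers in the presence of the delay: one must check that the boundary dissipation $u_{xx}(0,t)^2$ together with the internal damping $a(x)u$ genuinely dominates the destabilising delayed feedback, that the surplus $\frac32\int u_x^2-\frac12\int u^2$ from the $xu$-identity is positive --- which holds exactly for $L<\pi\sqrt3$ --- and then to propagate this balance through the nonlinear iteration with a smallness threshold $\delta$ uniform over the (small) initial data. Everything else is a fixed point argument together with bookkeeping of constants.
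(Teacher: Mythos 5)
Your overall architecture matches the paper's in its outer layers: reduce to exponential stability of the linear delayed flow for $\|b\|_\infty$ small, then handle $uu_x$ by a Duhamel/splitting argument with small data and iterate on successive time intervals (the paper's Step 5 writes $u=u^1+u^2$ exactly as you do), and your two multiplier identities, the role of Poincar\'e and of the restriction $L<\pi\sqrt3$, and the bilinear estimate for $uu_x$ are all consistent with Propositions \ref{P1J} and \ref{P3J} and Steps 1--4 of the proof of Theorem \ref{T1.2J}.

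The genuine gap is the step you dispose of in one sentence: ``together with the near monotonicity from the first step this produces \dots decay $E_u(T)\le e^{-2\nu T}E_u(0)$''. An a priori bound $\int_0^T\|u(\cdot,t)\|_{L^2}^2\,dt\lesssim\|(u_0,z_0)\|_{\mathcal H}^2$ combined with almost-monotonicity of $E_u$ gives boundedness, not decay; to get a strict contraction $E_u(T_0)\le\theta E_u(0)$, $\theta<1$, you need an observability-type inequality bounding $E_u(0)$ by the dissipation terms, or some substitute, and this is exactly where $\supp b\not\subset\supp a$ bites: the available dissipation $-\tfrac12u_{xx}^2(0,t)-\int_0^L a u^2\,dx$ does not control the destabilizing contribution $\int_0^L b u^2\,dx$, so the standard route does not close as stated. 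The paper resolves this with a decoupling your sketch does not contain: it introduces the auxiliary system \eqref{J42}/\eqref{J44} with the extra artificial damping $\xi b(x)u$, $\xi>1$, whose energy is genuinely decreasing and decays exponentially via the Lyapunov functional of Proposition \ref{P7J} (Appendix \ref{ApA}); then, in Proposition \ref{P8J}, the linear solution is written as $u=v+w$ with $v$ solving the auxiliary system and $w$ driven by the source $\xi b(x)v$, the latter estimated through the semigroup growth bound $\|e^{t\mathcal A}\|_{\mathcal L(\mathcal H)}\le e^{\frac{(3\xi+1)}{2}t}$ and made small by $\|b\|_\infty\le\delta$, which yields $E_u(T_0)\le(\eta+\varepsilon)E_u(0)$ and, upon iteration, the rate $\nu$ in \eqref{nu}. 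This construction is also what produces the specific quantities $T_0$, $\eta$, $\xi$, $\kappa$, $\gamma$ and hence $T_{\min}$ appearing in the statement; your proposal neither derives them nor supplies an alternative mechanism. A direct argument in your spirit (a single Lyapunov functional $E+\alpha\int_0^L xu^2\,dx+\beta\,(\text{history term})$ with all $O(\|b\|_\infty)$ defects, including the pointwise delayed term $\int_0^L b\,u^2(x,t-h)\,dx$, absorbed for $\|b\|_\infty<\delta$) may well be workable, but it has to be carried out quantitatively rather than asserted.
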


Another goal of this paper,  inspired  by the work of Nicaise and Pignotti \cite{Nicaise2006}, is to consider the following system
\begin{equation}
	\begin{cases}\label{J1}
		u_{t}(x,t)+u_{x}(x,t)+u_{xxx}(x,t)-u_{xxxxx}(x,t)+u(x,t)u_{x}(x,t)\\ \quad  \quad  \quad  \quad  \quad  \quad  \quad  \quad  \quad  \quad \quad +a\left(x\right)(\mu_1 u(x,t)+\mu_2u(x,t-h))=0 & x \in (0,L),~ t>0,\\
		u\left(  0,t\right)  =u\left(  L,t\right)  =u_{x}\left(  0,t\right)
		=u_{x}\left(  L,t\right)  =u_{xx}\left(  L,t\right)  =0 & t>0,\\
		u\left(  x,0\right)  =u_{0}\left(  x\right)  & x \in (0,L),\\
		u(x,t)=z_0(x,t) & x \in (0,L),~t \in (-h,0),
	\end{cases}
\end{equation}
called now on of $\mu_i-$system. Here $h>0$ is the time delay,  $\mu_1>\mu_2$ are positive real number and the initial data $(u_0, z_0)$ belong to a suitable space.  If $a(x)$ satisfies Assumption \ref{A1}, consider the following energy associated to the solutions of the system \eqref{J1}
\begin{equation}\label{J6}
E_u(t)=\frac{1}{2}\int_0^Lu^2(x,t)dx+\frac{\xi}{2}\int_0^L\int_0^1 a(x)u^2(x,t-\rho h) d\rho dx,
\end{equation}
where $\xi$ is a positive constant verifying the following
\begin{equation}\label{cdelay}
	h\mu_2<\xi<h(2\mu_1-\mu_2).
\end{equation}
Again, we are interested to see the questions previously mentioned.  Note that, in a different way of our first goal, the derivative of the energy \eqref{J6} satisfies 
$$
	E_u'(t)\leq-C\left[u_{xx}^2(0)+\int_{0}^{L}a(x)u^2(x)dx+\int_{0}^{L}a(x)u^2(x,t-h)dx\right],
$$
for some positive constant $C:=C(\mu_1,\mu_2,\xi,h)$. This indicates that the function $a(x)$ plays the role of a feedback damping mechanism, at least for the linearized system. Therefore, questions previously mentioned again arise to the solution of the system \eqref{J1}.
	\begin{itemize}
	\item[i.]Does $E_u(t)\longrightarrow0$, as $t\to\infty$?
	\item[ii.] If it is the case, can we give the decay rate?
	\end{itemize}

With all this information in hand,  the following two results are related to the system \eqref{J1}.  For this system we split the result of the behavior in two parts, the first one, by using Lyapunov approach, gives that the energy \eqref{J6} tends to zero, however, if the initial data are small enough. Precisely,  the local result can be read as follows.

\begin{theorem}\label{P6J}
Let $L>0$, assume that $a \in L^\infty(\Omega)$, \eqref{cdelay} holds and $L<\pi \sqrt{3}$. Then, there exists $0<r<\frac{9\pi^2-3L^2}{2L^{\frac{3}{2}}\pi^2}$ such that for every $(u_0,z_0(\cdot,-h(\cdot))) \in \mathcal{H}$ satisfying $\|(u_0,z_0(\cdot,-h(\cdot)))\|_{\mathcal{H}} \leq r,$ the energy \eqref{J6} of the system \eqref{J1} decays exponentially. More precisely, there exist two positive constants $\gamma$ and $\kappa$ such that
\begin{equation*}
	E(t)\leq \kappa E(0) e^{-2\gamma t} \hbox{ for all }  t>0.
\end{equation*}
Here, 
\begin{align*}
\gamma \leq & \min\left\{\frac{9\pi^2-3L^2-2L^{\frac{3}{2}} r\pi^2}{3L^2(1+2L\alpha)}\alpha,\frac{\beta \xi}{2h(\xi \beta+\xi)} \right\},\\
	\kappa = & \left(1+\max\{2\alpha L,\beta\}\right),
\end{align*}
with $\alpha$ and $\beta$ positive constants such that
\begin{align*}
\alpha<& \min\left\{\frac{1}{2L\mu_1+L\mu_2}\left(\mu_1-\frac{\xi}{2h}-\frac{\mu_2}{2}-\frac{\beta \xi}{2h}\right),\frac{1}{L\mu_2}\left(\frac{\xi}{2h}-\frac{\mu_2}{2}\right) \right\},\\
\beta<& \frac{2h}{\xi} \left(\mu_1-\frac{\xi}{2h}-\frac{\mu_2}{2}\right).
\end{align*}
\end{theorem}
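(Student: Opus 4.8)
The plan is to run a Lyapunov functional argument directly on the nonlinear system \eqref{J1}, combining the standard energy estimate for $E_u$ with an auxiliary multiplier that controls the $L^2$ norm of the state. Concretely, first I would establish the dissipation inequality $E_u'(t) \le -C\big[u_{xx}^2(0,t) + \int_0^L a(x)u^2(x,t)\,dx + \int_0^L a(x)u^2(x,t-h)\,dx\big]$ announced in the excerpt: multiply the equation by $u$, integrate over $(0,L)$, use the boundary conditions to handle the fifth-, third- and first-order terms (the boundary term $u_{xx}^2(0,t)$ appears with a favorable sign), note that the nonlinear term $\int_0^L u^2 u_x\,dx$ vanishes by the boundary conditions, and then differentiate the delayed part of \eqref{J6} in $\rho$ to convert $\int_0^1 a(x)u^2(x,t-\rho h)\,d\rho$ into a boundary contribution at $\rho=0$ and $\rho=1$; the algebraic condition \eqref{cdelay} on $\xi$ is exactly what is needed to absorb the cross term $\mu_2 a(x)u(x,t)u(x,t-h)$ by Young's inequality and keep $C=C(\mu_1,\mu_2,\xi,h)>0$.

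Next I would introduce the perturbed Lyapunov functional $V(t) = E_u(t) + \alpha V_1(t) + \beta V_2(t)$, where $V_1(t) = \int_0^L x\, u^2(x,t)\,dx$ (or a similar spatial multiplier) is designed so that $V_1'$ produces a negative multiple of $\int_0^L u_{xx}^2\,dx$ plus lower-order terms — this is where the length restriction $L < \pi\sqrt{3}$ enters, via the Poincaré-type inequality on $(0,L)$ (whose sharp constant involves $9\pi^2 - 3L^2$, matching the constants displayed in the statement) — and where $V_2(t) = h\int_0^L\int_0^1 (1-\rho)\, a(x) u^2(x,t-\rho h)\,d\rho\,dx$ is the usual delay Lyapunov term giving $V_2' \le -\frac{1}{h}\int_0^L\int_0^1 a(x)u^2\,d\rho\,dx + \int_0^L a(x)u^2(x,t)\,dx$, i.e. a term comparable to $-V_2$ plus a damping remainder. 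One checks that $V$ is equivalent to $E_u$ precisely when $\kappa = 1 + \max\{2\alpha L,\beta\}$, and that with $\alpha,\beta$ chosen to satisfy the two displayed smallness conditions all the "bad" terms in $V'$ coming from $V_1$ and $V_2$ are dominated by the good terms from $E_u'$, yielding $V'(t) \le -2\gamma V(t)$ with $\gamma$ as in the theorem; Gronwall then gives $E(t)\le \kappa E(0)e^{-2\gamma t}$.

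The main obstacle — and the reason the smallness hypothesis $\|(u_0,z_0)\|_{\mathcal{H}}\le r$ with $r < \frac{9\pi^2-3L^2}{2L^{3/2}\pi^2}$ is needed — is controlling the nonlinear contribution to $V_1'$. Differentiating $V_1$ brings in a term of the form $\int_0^L x\, u^2 u_x\,dx$, which after integration by parts is $-\tfrac13\int_0^L u^3\,dx$; estimating $\big|\int_0^L u^3\,dx\big| \le \|u\|_{L^\infty}\|u\|_{L^2}^2 \lesssim L^{1/2}\|u_x\|_{L^2}\|u\|_{L^2}^2$ (via Agmon/Gagliardo–Nirenberg on the bounded interval together with the Poincaré inequality) shows this term is controlled by $\|u(t)\|_{L^2}$ times $\int_0^L u_{xx}^2\,dx$ — hence it can be absorbed into the good dissipative term in $V_1'$ only when $\|u(t)\|_{L^2}$ stays small. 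So I would close the argument by a continuity/bootstrap argument: from the energy estimate $E_u$ is nonincreasing along the flow, so $\|u(t)\|_{L^2}^2 \le 2E_u(t)\le 2E_u(0)$ remains below the threshold $r$ for all $t\ge 0$ provided $\|(u_0,z_0)\|_{\mathcal H}\le r$ is small enough; this keeps the coefficient of $\int_0^L u_{xx}^2\,dx$ in $V'$ negative for all time — precisely the factor $9\pi^2 - 3L^2 - 2L^{3/2}r\pi^2 > 0$ appearing in $\gamma$ — and the exponential decay propagates globally in time. A minor technical point I would also address is the justification of these formal computations for the actual (finite-regularity) solutions, which is handled by working first with smooth solutions given by the well-posedness theory and passing to the limit by density.
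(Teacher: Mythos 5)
Your proposal is correct and follows essentially the same route as the paper's proof: the Lyapunov functional $V=E+\alpha V_1+\beta V_2$ with the multiplier $x$ in $V_1$, a $(1-\rho)$-weighted delay term $V_2$, Poincar\'e's inequality (whence $L<\pi\sqrt{3}$), absorption of the cubic term via $H_0^1(0,L)\hookrightarrow C([0,L])$ together with the smallness of $\|u(t)\|_{L^2}$ coming from the non-increasing energy, and a final density argument. The only differences are cosmetic: the paper weights $V_2$ by $\tfrac{\xi}{2}a(x)$ rather than $h\,a(x)$ (this normalization is what produces the exact constants $\kappa$, $\gamma$ and the bound on $\beta$ in the statement), the cubic remainder is absorbed into the $-3\alpha\int_0^L u_x^2\,dx$ term rather than a $u_{xx}$ term, and no bootstrap is needed since monotonicity of $E_u$ directly keeps $\|u(t)\|_{L^2}\le r$ for all $t$.
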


The last result of the manuscript, still related with the system \eqref{J1}, removes the hypothesis of the initial data being small.  To do that, we use compactness-uniqueness argument due to J.-L. Lions \cite{Lions}, which reduces our problem to prove an \textit{observability inequality} for the nonlinear system \eqref{J1}.  Thus,  we have the following semi-global result.

\begin{theorem}\label{T1.1J}
	Assume that $a(x)$ satisfying the Assumption \ref{A1}. Suppose that $\mu_1 > \mu_2$ and let $\xi>0$ satisfying \eqref{cdelay}. Let $R>0$, then there exists $C=C(R)>0$ and $\nu=\nu(R)>0$ such that $E_u$, defined in \eqref{J6}, satisfies 
	\begin{equation*}
		E_u(t)\leq CE_u(0)e^{-\nu t}, \quad \forall t>0,
	\end{equation*}
for solutions of \eqref{J1} provided that $\|(u_0,z_0)\|_{\mathcal{H}} \leq R$.
\end{theorem}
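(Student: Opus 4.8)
The plan is to run the classical Lions \emph{compactness–uniqueness} scheme to upgrade the local exponential decay of Theorem~\ref{P6J} to a global one, the only size restriction being $\|(u_0,z_0)\|_{\mathcal H}\le R$. First I would establish the requisite \emph{a priori} bounds: from the energy dissipation identity quoted just before the statement,
\begin{equation*}
E_u(T)+C\int_0^T\!\!\Big[u_{xx}^2(0,t)+\int_0^L a(x)u^2(x,t)\,dx+\int_0^L a(x)u^2(x,t-h)\,dx\Big]dt\le E_u(0),
\end{equation*}
so the map $t\mapsto E_u(t)$ is nonincreasing and the dissipation terms are globally integrable; in particular solutions with $\|(u_0,z_0)\|_{\mathcal H}\le R$ stay in a fixed ball of $\mathcal H$ for all time and, on any window $[0,T]$, enjoy the usual Kato smoothing bound in $L^2(0,T;H^2(0,L))$ inherited from the Kawahara boundary conditions. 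The standard semigroup-plus-fixed-point reduction then shows it suffices to prove, for each $T>0$, an \textbf{observability inequality}
\begin{equation*}
E_u(0)\le C(R,T)\int_0^T\!\!\Big[u_{xx}^2(0,t)+\int_0^L a(x)u^2(x,t)\,dx+\int_0^L a(x)u^2(x,t-h)\,dx\Big]dt
\end{equation*}
for all solutions of \eqref{J1} with data of norm $\le R$: combined with the dissipation identity this yields $E_u(T)\le(1-1/C)E_u(0)$ on a fixed-length window, and iterating over consecutive windows gives the asserted exponential rate with $\nu=\nu(R)$.

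I would prove the observability inequality by contradiction. Suppose it fails: there is a sequence of solutions $u^n$ with data bounded by $R$ such that $\lambda_n:=E_{u^n}(0)$ stays bounded below while the right-hand side integral tends to $0$. Two cases. If $\lambda_n\not\to0$, normalize and pass to the limit: by the a priori bounds $u^n$ is bounded in $L^2(0,T;H^2)\cap\dots$, so (Aubin–Lions) a subsequence converges strongly in $L^2((0,L)\times(0,T))$ to a limit $u$ which solves \eqref{J1}, has $E_u(0)>0$, yet satisfies $u_{xx}(0,t)=0$ and $a(x)u=0$, $a(x)u(\cdot,t-h)=0$ on $(0,T)$. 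Because $a\ge a_0>0$ on the nonempty open set $\omega$ (Assumption~\ref{A1}), $u\equiv0$ on $\omega\times(0,T)$, and then a \emph{unique continuation} argument for the Kawahara operator forces $u\equiv0$ everywhere — contradicting $E_u(0)>0$. If instead $\lambda_n\to0$, one rescales $v^n=u^n/\sqrt{\lambda_n}$; the nonlinearity $u^nu^n_x=\sqrt{\lambda_n}\,v^nv^n_x$ becomes negligible in the limit, and $v^n$ converges to a solution $v$ of the \emph{linear} delayed system with $E_v(0)=1$, again satisfying the degenerate observation $v_{xx}(0,\cdot)=0$, $a(x)v=0$; invoking the linear observability inequality (which is where the hypothesis $a\ge a_0>0$ on $\omega$, together with multiplier/compactness estimates as in \cite{Nicaise2006,CaArDo}, is used) gives $v\equiv0$, a contradiction. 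Either way the observability inequality holds.

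The main obstacle is the \textbf{unique continuation property} for the stationary/limit Kawahara system with delay: one must show that a solution of
\[
u_t+u_x+u_{xxx}-u_{xxxxx}+uu_x=0
\]
on $(0,L)\times(0,T)$ with the five boundary conditions of \eqref{J1} and vanishing on $\omega\times(0,T)$ is identically zero. For the linear case this follows from a Carleman estimate for the fifth-order operator (or a Holmgren-type argument after noting that the trace $u_{xx}(0,t)=0$ adds an extra homogeneous boundary condition), and for the nonlinear limit one absorbs $uu_x$ as a bounded potential times $u$, which is legitimate precisely because the Kato smoothing bound keeps $u$ in $L^2(0,T;H^2)\hookrightarrow L^2(0,T;W^{1,\infty})$. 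Assembling these ingredients — dissipation identity, a priori compactness, the two-case contradiction argument, the linear observability estimate, and unique continuation — and then iterating the resulting window estimate completes the proof of Theorem~\ref{T1.1J}.
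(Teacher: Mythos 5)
Your global scheme (dissipation inequality, observability inequality, iteration over time windows, compactness--uniqueness with unique continuation) is the same as the paper's, but the contradiction argument as you set it up has a genuine gap at its crucial point: producing a \emph{nonzero} limit. First, the negation of the observability inequality only gives a sequence $u^n$ for which the quotient $E_{u^n}(0)/(\text{observation terms})$ blows up; it does not give ``$E_{u^n}(0)$ bounded below while the observation tends to $0$'' unless you normalize, and since the equation is nonlinear the normalization must be tracked in the equation. More seriously, in both of your cases you conclude that the limit has $E_u(0)>0$ (resp.\ $E_v(0)=1$). This does not follow: the Aubin--Lions compactness you invoke gives strong convergence only in $L^2((0,L)\times(0,T))$, while the initial traces $u^n(\cdot,0)$ and the delay data converge merely weakly, so the initial energy can drop in the limit and the contradiction does not close. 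The paper avoids exactly this pitfall by first \emph{reducing} the observability inequality \eqref{J35} to the estimate \eqref{J38} on $\|u\|_{L^2(0,T;L^2(0,L))}^2$, using the multiplier $q=T-t$ identity \eqref{J37} together with the delay-term estimates \eqref{d7}; it then normalizes by $\lambda_n=\|u^n\|_{L^2(0,T;L^2(0,L))}$, so that the quantity equal to $1$ is precisely the norm in which strong compactness holds, and the limit $v$ genuinely satisfies $\|v\|_{L^2(0,T;L^2(0,L))}=1$. This reduction step is missing from your proposal and is not a cosmetic detail.

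Two further points. In your case (ii) you ``invoke the linear observability inequality'' for the delayed linear system, but this inequality is nowhere established and proving it would require its own argument; the paper instead handles the linear limit (its case $\lambda=0$, where $\lambda$ is the limit of the coefficients $\lambda_n$ multiplying the nonlinearity $v^nv^n_x$) directly by Holmgren's uniqueness theorem applied to the limit system \eqref{vlimit}. In the nonlinear limit case ($\lambda>0$) the paper does not simply absorb $vv_x$ as a potential with the $L^2(0,T;H^2)$ bound: it first bootstraps regularity by considering $u=v_t$, showing $v\in L^2(0,T;H^5(0,L))\cap H^1(0,T;H^2(0,L))$, and only then applies the Saut--Scheurer unique continuation result; your sketch should at least acknowledge that this extra regularity step is needed for the unique continuation theorem to apply. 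Finally, note that the boundedness of $\lambda_n$ (where the radius $R$ enters, via $\lambda_n^2\le T\,\|(u_0^n,z_0^n)\|_{\mathcal H}^2\le TR^2$ from \eqref{J24}) is what makes the constants, and hence the decay rate $\nu$, depend on $R$ --- this is the semi-global character of the theorem.
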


\subsection{Previous results} Let us now mention some bibliography comments about the stabilization problem for KdV-type models.  With respect to the Kawahara equation with damping term recently,  in \cite{CaArDo}, the authors considered this system
\begin{equation}\label{fda1a}
u_{t}+u_{x}+u_{xxx}-u_{xxxxx}+u^pu_{x}+a(x)u=0,\quad (x,t)\in(0,L)\times(0,T),
\end{equation}
for $p\in[1,4)$, with a presence of an extra damping term $a(x)$, such that 
\begin{equation}\label{fd2}
\left\{\begin{array}{l}
a \in L^{\infty}(0, L) \text { and } a(x) \geqslant a_{0}>0 \quad \text { a.e. in } \omega \\
\text { with a nonempty } \omega \subset(0, L).
\end{array}\right.
\end{equation}
This damping mechanism is essential already in a linear case: if $a(x)\equiv0,$ then a nontrivial solution to
\begin{equation*}
	\begin{cases}
	u_{t}+u_{x}+u_{xxx}-u_{xxxxx}=0,&(x,t)\in (0,L) \times (0,T)\\
	u\left(  0,t\right)  =u\left(  L,t\right)  =u_{x}\left(  0,t\right)
=u_{x}\left(  L,t\right)  =u_{xx}\left(  L,t\right)  =0,&t\in\left(  0,T\right)\\
u\left(  x,0\right)  =u_{0}\left(  x\right), &x\in(0,L),
	\end{cases}
	\end{equation*}
is constructed to be not decayed as $t\rightarrow\infty$ if the length of an
interval is critical (see \cite{CaArDo}). Observe that due to the drift term $u_{x}$ the same
occurs for the KdV equation \cite{Rosier}. Indeed, if for instance $L=2\pi
n,\ n\in\mathbb{N},$ then the function $v(x)=1-\cos x$ solves
\begin{equation*}
	\begin{cases}
	u_{t}+u_{x}+u_{xxx}=0,&(x,t)\in (0,L) \times (0,T)\\
	u\left(  0,t\right)  =u\left(  L,t\right) =u_{x}\left(  L,t\right)  =0,&t\in\left(  0,T\right)\\
u\left(  x,0\right)  =u_{0}\left(  x\right), &x\in(0,L),
	\end{cases}
	\end{equation*}
and clearly $v(x)\not \rightarrow 0$ as $t\rightarrow\infty.$ Despite the
valuable advances in \cite{Cerpa, Cerpa1, Coron, Goubet}, the question whether
solutions of undamped problems associated to nonlinear KdV and Kawahara equations decay as $t\to\infty$ for all finite $L>0$ is still open.

To overcome these difficulties, a damping of the type $a(x)u$ was introduced
in \cite{Zuazua1} to stabilize the KdV system. More precisely, considering the
damping localized at a subset $\omega\subset\left(  0,L\right)  $ containing
nonempty neighborhoods of the end-points of an interval, it was shown that
solutions of both linear and nonlinear problems for the KdV equation decay,
independently on $L>0.$ In \cite{Pazoto} it was proved that the same holds
without cumbersome restrictions on $\omega\subset(0,L).$ In
\cite{vasconcellos, vasc1} the damping like in \eqref{fd2} was used for
\eqref{fda1a} without the drift term $u_{x}.$ If, however, the linear term
$u_{x}$ is dropped, both the KdV and Kawahara equations do not possess
critical set restrictions \cite{Rosier, vasc2}, and the damping is not
necessary. The decay of solutions in such case was also proved in
\cite{Doronin2, Faminskii} by different methods.

Once the damping term $a(x)u\not \equiv 0$ is added to \eqref{fda1a}, the
nonlinearity $uu_{x}$ provides the second difficulty which should be
treated with accurateness. In this context the mixed problems for the
generalized KdV equation
\begin{equation}
u_{t}+u_{x}+u_{xxx}+u^{p}u_{x}+a\left(  x\right)  u=0, \label{kdv1}%
\end{equation}
were studied in \cite{Rosier1} when $p\in[2,4).$ For the critical exponent,
$p=4,$ the global well-posedness and the exponential stability were studied in
\cite{Lin-Paz}. The reader is also referred to \cite{Komornik, RussellZhang}
and the references therein for an overall literature review. 

Still related with damping mechanism for dispersive model,  more recently,  Cavalcanti \textit{et al.}  \cite{CCKR} studied a damped KdV--Burgers equation in the real line,
\begin{equation}\label{14}
\begin{cases}
u_t(x,t)+u_{xxx}(x,t)-u_{xx}(x,t) +\lambda_0 u(x,t) +u(x,t) u_x(x,t)=0
&(x,t)\in\mathbb{R}\times (0,\infty),\\
u(x, 0)=u_0(x)&x\in\mathbb{R}
\end{cases}
\end{equation}
The authors were able to show the well-posedness and exponential stability for an indefinite damping $\lambda_0(x),$ giving exponential decay estimates on the $L^2-$norm of solutions to \eqref{14} under appropriate conditions on the damping coefficient $\lambda_0.$ Additionally,  recently a work due to Komornik and Pignotti \cite{KomPig2020} studied the following equation
\begin{equation}\label{11}
\begin{cases}
u_t(x,t)+u_{xxx}(x,t)-u_{xx}(x,t) +\lambda_0 u(x,t)\\
\hspace{4cm}+\lambda u(x,t-\tau )+u(x,t) u_x(x,t)=0
&(x,t)\in\mathbb{R}\times (0,\infty),\\
u(x, s)=u_0(x, s)&x\in\mathbb{R}\times [-\tau, 0].
\end{cases}
\end{equation}
Precisely,  the authors consider the system \eqref{11} in presence of a damping term and delay feedback.  They proved the exponential decay estimates under appropriate conditions on the damping \linebreak coefficients. 

It is important to point out that very recently, the robustness with respect to the delay of the boundary stability of the nonlinear KdV equation has been studied in \cite{BaCrVa}. The authors obtain, under an appropriate condition on the feedback gains with and without delay  the locally exponentially stable result for non critical length.  Moreover,  in \cite{Valein}, the authors extend this result for the nonlinear Korteweg-de Vries equation in the presence of an internal delayed term. This work is our motivation to treat more general dispersive systems in this manuscript.

\subsection{Heuristic of the article}
In this article, we investigated and discussed the stabilization problems with the damping mechanism and feedback delay of a fifth order KdV, known as the Kawahara equation. As we can see in this introduction, the agenda of the research of stabilization problems for the Kawahara equation is quite new and does not acknowledge many results in the literature. With this proposal to fill this gap, we intend to introduce two ways to treat the stabilization of the solution for the Kawahara equation with damping and delay terms. In both cases, under suitable assumptions, we prove exponential stability results of the solution which are obtained by introducing suitable energies, using Lyapunov approach and compactness-uniqueness argument.

First, the strategy to treat Theorem \ref{T1.2J} is the following: We first prove the exponential stability for the Kawahara system linearized around $0$ (see Appendix \ref{ApA}) by the Lyapunov approach for all $L< \pi \sqrt{3}$ (allowing to have an estimation of the decay rate), then for $\|b\|_\infty$ small enough, we show the local exponential stability result by a decoupling approach inspired in \cite{Valein}.

The second result or the manuscript, Theorem \ref{P6J}, has a local character, that is, is necessary to make the initial data small enough.  The local stability result is based on the appropriate choice of Lyapunov functional, which one gives a restriction of the lengths $L$. This happens by the fact from the choice of the Morawetz multipliers $x$ in the expression of $V_1$ (see \eqref{J27}).

Finally,  Theorem \ref{T1.1J}, is based to prove an observability inequality, for the nonlinear delayed Kawahara equation which one is proved using a contradiction argument.  Consequently, the value of the decay rate can not be estimated in this approach, differently than before. The two main difficulties to the semi-global stability result are the pass to the limit in the nonlinear term and the fact that this nonlinear term does not allow to use Holmegren's uniqueness theorem. Instead we will use the unique continuation property for the nonlinear system due to Saut and Scheurer \cite{Saut}.  In this case,  the results follows without restriction in the length $L>0$.

\vspace{0.1cm}

We finish our introduction with a few comments that give a generality of the problems in consideration.
\begin{itemize}
\item First, observe that to prove Theorem \ref{T1.2J} we do not need to localize the solution of the transport equation\footnote{See the equation \eqref{J10} below.} in a small subset of $(0,L)$ as in \cite[Section 4]{Valein}. Moreover, we emphasize that we can take $a=0$ in the Theorem \ref{T1.2J}. Finally, it is important to mention that we do not know if the time $T_{\min}$ is optimal. 
\vspace{0.1cm}
\item It is important to point out that the Theorem \ref{P6J} gives an estimation of the decay rate $\gamma$. In particular, we can note that when the delay $h$ increases, the decay rate $\gamma$ decreases. 
\vspace{0.1cm}
\item Note that in Theorems \ref{P6J} and \ref{T1.1J} the relation \eqref{cdelay} is more general that one used in \cite{Valein}.  Our motivation is the general framework introduced by Nicaise and Pignotti in \cite{Nicaise2006}.
\vspace{0.1cm}
\item As mentioned before, Theorem  \ref{T1.1J} has a semi-global character. This comes from the fact that even if we are able to choose any radius $R$ for the initial data, the decay rate $\nu$ (see \eqref{nu_a}) depends on $R$.
\vspace{0.1cm}
\item The previous results are not only true for the nonlinearity $uu_x$. Using the same approach as in \cite{CaArDo}, we can deal with a general nonlinearity as $u^{p}u_{x}$ for  $p\in[1,4)$.  For simplicity here we will treat the case $p=1$.
\vspace{0.1cm}
\item As mentioned in the beginning of the introduction, there are several long wave \linebreak approximations according to relations between $\varepsilon$ and $\delta$, for instance, to the KdV equation \cite{Korteweg}\footnote{This equation was firstly introduced by Boussinesq \cite{Boussinesq}, and Korteweg and de Vries rediscovered it twenty years later.} we can take $\varepsilon = \delta^2 \ll 1$ and $\mu \neq \frac13$ which give us
\[\pm2 u_t + 3uu_x +\left( \frac13 - \mu\right)u_{xxx} = 0.\]
Connecting the KdV and Kawahara equations, in \cite{LucWan} the authors studied limit behavior of the solutions of the Kawahara equation $$ u_{t}+u_{xxx}+\varepsilon u_{xxxxx}+u u_{x}=0, \quad \varepsilon>0 $$ as $\varepsilon \rightarrow 0 .$ Note that in this previous equation  $u_{xxx}$ and $\varepsilon u_{xxxxx}$ compete each other and cancel each other at frequencies of order $1 / \sqrt{\varepsilon}$.  Thus, the authors proved that the solutions to this equation converge in $C\left([0, T] ; H^{1}(\mathbb{R})\right)$ towards the solutions of the KdV equation for any fixed $T>0$. Due to this previous fact, we believe that considering an approximation of the delayed system in the bounded domain
\begin{equation*}
\begin{cases}
u_{t}(x,t)+u_{x}(x,t)+u_{xxx}(x,t)+\varepsilon u_{xxxxx}(x,t)+u(x,t)u_{x}(x,t)\\ \quad  \quad  \quad  \quad  \quad  \quad  \quad  \quad  \quad  \quad \quad \quad  +a\left(x\right)u(x,t)  +b(x)u(x,t-h)=0 & x \in (0,L),~ t>0,\\
%u\left(  0,t\right)  =u\left(  L,t\right)  =u_{x}\left(  0,t\right)=u_{x}\left(  L,t\right)  =u_{xx}\left(  L,t\right)  =0 & t>0,\\
u\left(  x,0\right)  =u_{0}\left(  x\right)  & x \in (0,L),\\
u(x,t)=z_0(x,t) & x \in (0,L),~t \in (-h,0),
\end{cases}
\end{equation*}
with the compatible $\varepsilon-$boundary condition,  using the approach of our work, we can recover (as $\varepsilon \rightarrow 0$) the results proposed by \cite{Valein}.
\end{itemize}
%\end{remarks}

\subsection{Organization of the article} Our manuscript is outlined as follows: First, Section \ref{Sec2} is related with the well-posedness results for $\mu_i-$system \eqref{J1} and its adjoint.  After that, Section \ref{Sec3} is devoted to prove properties of the damping--delayed system \eqref{fd1},  that is,  we show the Theorem \ref{T1.2J}, where the analysis developed in the Appendix \ref{ApA} is crucial.  In Section \ref{Sec4}, we give a rigorous proof of the asymptotic stability for the solutions of the system \eqref{J1}, precisely, we prove Theorem \ref{P6J}. After that, in this same section, to remove restrictions of the Theorem \ref{P6J}, we prove an \textit{observability inequality},  which is the key to prove Theorem \ref{T1.1J}.

\section{Well-posedness of  $\mu_i-$system \label{Sec2}}
Our goal in this section is to prove the well-posedness theory for the system \eqref{J1}. This analysis is useful for the stability properties for the solutions of this system.

\subsection{Linear system\label{S1}}
For the sake of completeness, we provide below the well-posedness results for
the linear system
\begin{equation}
	\begin{cases}\label{KW3}
		u_{t}(x,t)+u_{x}(x,t)+u_{xxx}(x,t)-u_{xxxxx}(x,t)\\ \quad  \quad  \quad  \quad  \quad  \quad  \quad  \quad  \quad  \quad \quad +a\left(x\right)(\mu_1 u(x,t)+\mu_2u(x,t-h))=0 & x \in (0,L),~ t>0,\\
		u\left(  0,t\right)  =u\left(  L,t\right)  =u_{x}\left(  0,t\right)
		=u_{x}\left(  L,t\right)  =u_{xx}\left(  L,t\right)  =0 & t>0,\\
		u\left(  x,0\right)  =u_{0}\left(  x\right)  & x \in (0,L),\\
		u(x,t)=z_0(x,t) & x \in (0,L),~t \in (-h,0).
	\end{cases}
\end{equation}

A classical way to deal with the well-posedness of the delayed equations (see e.g. \cite{Nicaise2006}) is to consider $z(x,\rho,t)=u(x,t-\rho h)$, for any $x \in (0,L)$, $\rho \in (0,1)$ and $t>0$.  So, its easily verified that $z$ satisfies the transport equation
\begin{equation}\label{T1}
	\begin{cases}
h z_t(x,\rho,t)+z_\rho(x,\rho,t)=0 & x \in (0,L), ~\rho \in (0,1), ~ t>0,\\
z(x,0,t)=u(x,t) & x \in (0,L),~ t>0,\\
z(x,\rho,0)=z_0(x,-\rho h) & x \in (0,L), ~\rho \in (0,1).
	\end{cases}
\end{equation}
We equipped the Hilbert space $\mathcal{H}=L^2(0,L) \times L^2((0,L)\times(0,1))$ with the following inner product 
\begin{equation*}
	((u,z), (v,w))_{\mathcal{H}} = \int_0^L uv dx+\xi \|a\|_\infty \int_{0}^{L}\int_0^1 z(x,\rho)w(x,\rho) dxd\rho,
\end{equation*}
where $\xi$ is a positive constant satisfying \eqref{cdelay} or, equivalently, 
\begin{equation}\label{2.7}
	\mu_2<\frac{\xi}{h}<2\mu_1-\mu_2.
\end{equation}
that we will use from now on.

To study the well-posedness theory in the sense of Hadamard,  we need to put the equation \eqref{KW3} into an abstract setting. To do it, let us rewrite this system as follows: Consider $U(t)=(u,z(\cdot,\cdot,t))$,  so the equation \eqref{KW3} can be reformulated as the following system
\begin{equation}
\begin{cases}\label{J9}
	u_{t}(x,t)+u_{x}(x,t)+u_{xxx}(x,t)-u_{xxxxx}(x,t)\\ \quad  \quad  \quad  \quad  \quad  \quad  \quad  \quad  \quad  \quad \quad +a\left(x\right)(\mu_1 u(x,t)+\mu_2z(1))=0 & x \in (0,L),~ t>0,\\
	u\left(  0,t\right)  =u\left(  L,t\right)  =u_{x}\left(  0,t\right)
	=u_{x}\left(  L,t\right)  =u_{xx}\left(  L,t\right)  =0 & t>0,\\
	u\left(  x,0\right)  =u_{0}\left(  x\right)  & x \in (0,L),\\
	h z_t(x,\rho,t)+z_\rho(x,\rho,t)=0 & x \in (0,L), ~\rho \in (0,1), ~ t>0,\\
	z(x,0,t)=u(x,t) & x \in (0,L),~ t>0,\\
	z(x,\rho,0)=z_0(x,-\rho h) & x \in (0,L), ~\rho \in (0,1),
\end{cases}
\end{equation}
which is equivalent to the following abstract Cauchy problem
\begin{equation}\label{J10}
	\begin{cases}
		\displaystyle \frac{\partial U}{\partial t}(t)=\mathcal{A}U(t),\\
		U(0)={}  (u_{0}(x), z_0(x,-\rho h )).
	\end{cases}
\end{equation}
Here,  the unbounded operator $\mathcal{A}:\mathscr{D}(\mathcal{A}) \subset \mathcal{H} \rightarrow \mathcal{H}$ is given by
\begin{equation}\label{A}
	\mathcal{A}(u,z)=(-u_x-u_{xxx}+u_{xxxxx}-a(x)(\mu_1 u+\mu_2z(\cdot,1)),-h^{-1}z_\rho)
\end{equation}
with domain
\begin{equation}\label{2.13}
	\mathscr{D}(\mathcal{A})  = \left\{(u,z) \in \mathcal{H}: 
	\begin{array}{c}
	\displaystyle	u \in H^5(0,L), u(0)=u(L)=u_x(0)=u_x(L)=u_{xx}(L)=0,\\
	\displaystyle  z_\rho \in L^2((0,L)\times (0,1)),	z(0)=u
	\end{array}
	\right\}. 
\end{equation}
Let us now denote $z(1):=z(x,1,t)$ in system \eqref{J9} which will be used throughout the article. Thus, the first result of this section gives some properties of the operator $A$ and its adjoint $A^{*}$.
\begin{lemma}
	The operator $\mathcal{A}$ is closed and its adjoint $\mathcal{A}^{*}:\mathscr{D}(\mathcal{A}^*)\subset \mathcal{H} \rightarrow \mathcal{H}$ is given by
	\begin{equation}\label{Astar}
		\mathcal{A}^*(u,z)=(u_x+u_{xxx}-u_{xxxxx}-a(x)\mu_1 u+\frac{\xi \|a\|_\infty}{h}z(\cdot,0),h^{-1}z_\rho)
	\end{equation}
	with domain
	\begin{equation}\label{2.14}
		\mathscr{D}(\mathcal{A}^*)  = \left\{(u,z) \in \mathcal{H}: 
		\begin{array}{c} \displaystyle
			u \in H^5(0,L),  u(0)=u(L)=u_x(0)=u_x(L)=u_{xx}(0)=0,\\
			\displaystyle z_\rho \in L^2((0,L) \times (0,1)), z(x,1)=-\frac{a(x) h \mu_2}{\|a\|_\infty\xi}u(x)
		\end{array}
		\right\}. 
	\end{equation}
\end{lemma}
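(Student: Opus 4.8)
**Proof strategy for the lemma characterizing $\mathcal{A}^*$.**

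The plan is to proceed in two stages: first verify that $\mathcal{A}$ is closed, then compute the adjoint by the standard duality pairing. For closedness, I would observe that $\mathcal{A}$ is a lower-order perturbation (the bounded multiplication operators $u \mapsto -a(x)\mu_1 u$ and $z \mapsto -a(x)\mu_2 z(\cdot,1)$, the latter making sense on $\mathscr{D}(\mathcal{A})$ via the trace $\rho = 1$) of the operator $(u,z)\mapsto(-u_x-u_{xxx}+u_{xxxxx},-h^{-1}z_\rho)$, which is closed on its domain because the Kawahara spatial operator is closed on $H^5(0,L)$ with the given boundary conditions and $-h^{-1}\partial_\rho$ is closed on $\{z_\rho \in L^2\}$ with the coupling $z(x,0)=u(x)$ as a closed constraint (traces in $\rho$ are continuous on this graph-norm space). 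Alternatively, and perhaps more cleanly, closedness follows a posteriori from the fact that $\mathcal{A}$ is densely defined and $\mathcal{A}^*$ exists with dense domain, so $\mathcal{A}=\mathcal{A}^{**}$ is closed; I would likely just remark this.

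For the computation of $\mathcal{A}^*$, take $(u,z)\in\mathscr{D}(\mathcal{A})$ and $(v,w)\in\mathcal{H}$, and expand $(\mathcal{A}(u,z),(v,w))_{\mathcal{H}}$ using the inner product with weight $\xi\|a\|_\infty$ on the second component. The first component contributes $\int_0^L(-u_x-u_{xxx}+u_{xxxxx})v\,dx - \int_0^L a(x)(\mu_1 u+\mu_2 z(\cdot,1))v\,dx$; integrating by parts five times in $x$ and using the boundary conditions in $\mathscr{D}(\mathcal{A})$ (namely $u(0)=u(L)=u_x(0)=u_x(L)=u_{xx}(L)=0$) transfers the derivatives onto $v$, producing $\int_0^L u\,(v_x+v_{xxx}-v_{xxxxx})\,dx$ together with boundary terms that vanish precisely when $v$ satisfies $v(0)=v(L)=v_x(0)=v_x(L)=v_{xx}(0)=0$ — note the switch of the second-derivative condition from the right endpoint to the left, which is the hallmark of the backward Kawahara operator. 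The second component contributes $-\xi\|a\|_\infty h^{-1}\int_0^L\int_0^1 z_\rho w\,d\rho\,dx$; integrating by parts in $\rho$ gives $\xi\|a\|_\infty h^{-1}\int_0^L\int_0^1 z\,w_\rho\,d\rho\,dx$ plus the boundary terms $-\xi\|a\|_\infty h^{-1}\int_0^L\big(z(x,1)w(x,1)-z(x,0)w(x,0)\big)dx$, and here I substitute $z(x,0)=u(x)$.

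Collecting everything, $(\mathcal{A}(u,z),(v,w))_{\mathcal{H}}$ equals $((u,z),\mathcal{A}^*(v,w))_{\mathcal{H}}$ with $\mathcal{A}^*(v,w)=(v_x+v_{xxx}-v_{xxxxx}-a(x)\mu_1 v+\xi\|a\|_\infty h^{-1}w(\cdot,0),\,h^{-1}w_\rho)$ provided the leftover uncancelled terms — the piece $-\mu_2\int_0^L a(x)z(x,1)v\,dx$ from the first component and the piece $-\xi\|a\|_\infty h^{-1}\int_0^L z(x,1)w(x,1)\,dx$ from the second — cancel for all admissible $z$. Since $z(x,1)$ ranges freely (it is not constrained in $\mathscr{D}(\mathcal{A})$), this forces $w(x,1)=-\dfrac{a(x)h\mu_2}{\|a\|_\infty\xi}v(x)$, which is exactly the constraint appearing in $\mathscr{D}(\mathcal{A}^*)$. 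One then checks the reverse inclusion: if $(v,w)\in\mathcal{H}$ is such that $(u,z)\mapsto(\mathcal{A}(u,z),(v,w))_{\mathcal{H}}$ is $\mathcal{H}$-continuous, running the integration by parts backwards and testing against suitable $u$ and $z$ (first with compactly supported test functions to get the regularity $v\in H^5$, $w_\rho\in L^2$, then with functions having prescribed boundary/trace values to extract each boundary condition and the trace constraint) shows $(v,w)\in\mathscr{D}(\mathcal{A}^*)$ as described.

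The main obstacle is bookkeeping rather than conceptual: keeping track of the five successive integrations by parts for the fifth-order term and correctly identifying which five of the ten possible boundary terms $\{u^{(j)}v^{(k)}\}$ survive, so as to read off the "adjoint" boundary conditions $v_{xx}(0)=0$ (replacing $u_{xx}(L)=0$) — a sign/endpoint error here is the easy way to go wrong. The coupling between the transport boundary terms at $\rho=0,1$ and the damping term $-\mu_2 a(x)z(\cdot,1)$ must be matched carefully, as it is this matching that both fixes the weight $\xi\|a\|_\infty$ in the inner product as the natural one and produces the nonhomogeneous trace condition on $w$ in $\mathscr{D}(\mathcal{A}^*)$.
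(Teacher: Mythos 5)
Your computation is correct (the five integrations by parts do yield the adjoint boundary conditions $v(0)=v(L)=v_x(0)=v_x(L)=v_{xx}(0)=0$, and matching the leftover $-\mu_2\int_0^L a\,z(\cdot,1)v\,dx$ against the transport boundary term forces exactly the trace condition $w(x,1)=-\frac{a(x)h\mu_2}{\xi\|a\|_\infty}v(x)$), and it is precisely the ``standard'' verification the paper omits. Your closedness argument via $\mathcal{A}=\mathcal{A}^{**}$ is the same one the paper uses (citing Brezis), so the proposal follows essentially the paper's approach, only with the adjoint computation written out.
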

\begin{proof}
The proof that $A^*$ is given as in the statement of the lemma is standard. To show that $A$ is closed, note that $A^{**}=A$ and the result follows from \cite[Proposition 2.17]{Brezis}.
\end{proof}

Now, we are able to prove that $\mathcal{A}$ is the infinitesimal generator of a $C_0$-semigroup. Precisely, the result can be read as follows.
\begin{proposition}\label{Agi}
	Assume that $a \in L^\infty(\Omega)$ and \eqref{2.7} holds. Then, $\mathcal{A}$ is the infinitesimal generator of a $C_0$-semigroup in $\mathcal{H}$.
\end{proposition}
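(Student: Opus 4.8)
The plan is to invoke the Lumer--Phillips theorem in the following form: a \emph{closed}, densely defined operator $B$ on a Hilbert space is the infinitesimal generator of a $C_0$-semigroup of contractions provided that both $B$ and its adjoint $B^{*}$ are dissipative. Here $\mathcal{A}$ is closed by the previous lemma, its adjoint is explicitly known from \eqref{Astar}--\eqref{2.14}, and $\mathscr{D}(\mathcal{A})$ is dense in $\mathcal{H}$ (it contains, for instance, $C_c^\infty(0,L)$ in the first component together with suitable liftings $z$ in the second). Since $\mathcal{A}$ itself will turn out not to be dissipative, the strategy is to exhibit a constant $c\ge 0$ for which $\mathcal{A}-cI$ and $(\mathcal{A}-cI)^{*}=\mathcal{A}^{*}-cI$ are both dissipative; then $\mathcal{A}-cI$ generates a contraction semigroup, whence $\mathcal{A}$ generates a $C_0$-semigroup with $\|e^{t\mathcal{A}}\|\le e^{ct}$ on $\mathcal{H}$.

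For the first dissipativity estimate, fix $(u,z)\in\mathscr{D}(\mathcal{A})$ and compute $(\mathcal{A}(u,z),(u,z))_{\mathcal{H}}$ with the inner product of $\mathcal{H}$. Integrating $-u_x-u_{xxx}+u_{xxxxx}$ against $u$ and integrating by parts repeatedly, the boundary conditions $u(0)=u(L)=u_x(0)=u_x(L)=u_{xx}(L)=0$ annihilate every boundary term except the one from the fifth-order term, which leaves $-\tfrac12 u_{xx}(0)^2$. The transport component contributes, after integrating in $\rho$ and using $z(x,0)=u(x)$, the quantity $-\tfrac{\xi\|a\|_\infty}{2h}\int_0^L z(x,1)^2\,dx+\tfrac{\xi\|a\|_\infty}{2h}\int_0^L u^2\,dx$. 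The remaining cross term $-\mu_2\int_0^L a(x)z(x,1)u(x)\,dx$ is handled with Young's inequality; bounding $a\le\|a\|_\infty$ where needed and using \eqref{2.7}, which guarantees $\mu_1-\tfrac{\mu_2}{2}>0$ and $\tfrac{\xi}{h}-\mu_2>0$, all the damping and delay contributions are nonpositive and one is left with
\[
(\mathcal{A}(u,z),(u,z))_{\mathcal{H}}\le -\tfrac12 u_{xx}(0)^2+\frac{\xi\|a\|_\infty}{2h}\,\|(u,z)\|_{\mathcal{H}}^{2}.
\]
Hence $\mathcal{A}-cI$ is dissipative with $c:=\tfrac{\xi\|a\|_\infty}{2h}$.

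The adjoint estimate is entirely analogous, using the data of $\mathscr{D}(\mathcal{A}^{*})$ in \eqref{2.14}. For $(u,z)\in\mathscr{D}(\mathcal{A}^{*})$ the boundary conditions are $u(0)=u(L)=u_x(0)=u_x(L)=u_{xx}(0)=0$, so the same integrations by parts leave the surviving boundary term $-\tfrac12 u_{xx}(L)^2$; the transport component now produces, via the relation $z(x,1)=-\tfrac{a(x)h\mu_2}{\|a\|_\infty\xi}u(x)$, a term controlled by $\tfrac{h\mu_2^{2}}{2\xi}\int_0^L a(x)u^2\,dx$ (after $a^2\le\|a\|_\infty a$), while the cross term $\tfrac{\xi\|a\|_\infty}{h}\int_0^L z(x,0)u(x)\,dx$ is split by Young's inequality so that the $\int_0^L z(x,0)^2\,dx$ contributions cancel. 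What remains is $\big(-\mu_1+\tfrac{h\mu_2^{2}}{2\xi}\big)\int_0^L a(x)u^2\,dx$ plus $c\,\|(u,z)\|_{\mathcal{H}}^{2}$; condition \eqref{2.7} forces $\tfrac{h\mu_2^{2}}{2\xi}<\tfrac{\mu_2}{2}<\mu_1$, so the first coefficient is negative and $\mathcal{A}^{*}-cI$ is dissipative with the same constant $c$. Combining this with the closedness and density already recorded, the Lumer--Phillips characterization yields the proposition.

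I expect the only genuinely delicate point to be the careful bookkeeping in these two dissipativity computations --- in particular, noticing that $\mathcal{A}$ is not dissipative because the transport part produces the uncontrolled positive term $\tfrac{\xi\|a\|_\infty}{2h}\int_0^L u^2$, so the shift by $cI$ is unavoidable, and checking that the delay-weight condition \eqref{2.7} (respectively, the adjoint boundary relation $z(x,1)=-\tfrac{a h\mu_2}{\|a\|_\infty\xi}u$) is exactly what makes the residual damping and delay terms have the right sign. As an alternative to the adjoint computation, one could verify maximality directly, i.e. that $\lambda I-\mathcal{A}$ maps $\mathscr{D}(\mathcal{A})$ onto $\mathcal{H}$ for $\lambda>c$: solving the transport equation explicitly as $z(x,\rho)=u(x)e^{-\lambda h\rho}+h\int_0^{\rho}e^{-\lambda h(\rho-\sigma)}g(x,\sigma)\,d\sigma$ reduces the resolvent problem to a stationary fifth-order boundary value problem for $u$ with nonnegative zeroth-order coefficient $\lambda+a(\mu_1+\mu_2e^{-\lambda h})$, solvable by a variational/Galerkin argument together with elliptic regularity; the route through $\mathcal{A}^{*}$ is, however, shorter since the adjoint is already at hand.
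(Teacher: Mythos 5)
Your proposal is correct and follows essentially the same route as the paper: shift by $c=\frac{\xi\|a\|_\infty}{2h}$, verify via the same integrations by parts and Young's inequality (using \eqref{2.7} for the sign of the damping/delay terms, and the adjoint boundary relation $z(\cdot,1)=-\frac{a h\mu_2}{\|a\|_\infty\xi}u$ for $\mathcal{A}^*$) that both $\mathcal{A}-cI$ and $(\mathcal{A}-cI)^*$ are dissipative, and conclude with the closed-densely-defined-operator form of Lumer--Phillips (the paper's citation of \cite[Corollary 4.4]{Pazy}). The alternative resolvent/maximality argument you sketch is not needed and is not the paper's route; the adjoint computation you carry out is exactly what the paper does.
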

\begin{proof}
Let $U =(u,z) \in \mathscr{D}(\mathcal{A})$, then integrating by parts, using the definition of \eqref{2.13} and Young's inequality we have 
%\begin{align*}
%	(\mathcal{A}U,U)_{\mathcal{H}} =
%& ((-u_x-u_{xxx}+u_{xxxxx}-a(x)(\mu_1 u+\mu_2z(\cdot,1)),-h^{-1}z_\rho),(u,z))_{\mathcal{H}}\\
%%= & \int_{0}^{L} u_{xxxxx}udx-\int_{0}^{L} u_{xxx}u dx-\int_{0}^{L} u_x u dx-\int_{0}^{L}a(x)\mu_1 u^2 dx\\
%%&-\int_{0}^{L}a(x)\mu_2 z(\cdot, 1) u dx-\frac{\xi \|a\|_\infty}{h}\int_{0}^{1}\int_{0}^{L} z_\rho z d\rho dx\\
%%= & u_{4x}u|_{0}^{L}-\int_{0}^{L} u_{xxxx}u_x dx-u_{xx}u|_{0}^{L}+\int_{0}^{L}u_{xx}u_x dx-\int_{0}^{L}\frac{1}{2}(u^2)_xdx\\
%%&- \int_{0}^{L} a(x)\mu_1 u^2 dx-\int_{0}^{L}a(x)\mu_2 z(\cdot,1)u dx-\frac{\xi \|a\|_\infty}{h}\int_{0}^{L}\int_{0}^{1}\frac{1}{2}(z^2)_\rho d\rho dx\\
%%= & - u_{xxx}u_x|_{0}^{L}+\int_{0}^{L}u_{xxx}u_{xx}dx+\int_{0}^{L}\frac{1}{2}(u_x^2)_xdx-\int_{0}^{L}a(x)\mu_1u^2dx\\
%%&-\int_{0}^{L}a(x)\mu_2z(\cdot,1)udx-\frac{\xi\|a\|_\infty}{2h}\int_{0}^{L}\left[z^2(\cdot,1)-z^2(\cdot,0)\right]dx\\
%%= &\int_{0}^{L}\frac{1}{2}(u_{xx}^2)_xdx-\int_{0}^{L}a(x)\mu_1u^2 dx -\int_{0}^{L}a(x)\mu_2z(\cdot,1)udx\\
%%&-\frac{\xi\|a\|_\infty}{2h}\int_{0}^{L}z^2(\cdot,1)dx +\frac{\xi\|a\|_\infty}{2h}\int_{0}^{L}z^2(\cdot,0)dx\\
%=& -\frac{1}{2}u_{xx}^{2}(0)-\int_{0}^{L}a(x)\mu_1u^2 dx -\int_{0}^{L}a(x)\mu_2z(\cdot,1)udx\\
%&-\frac{\xi\|a\|_\infty}{2h}\int_{0}^{L}
%z^2(\cdot,1)dx +\frac{\xi\|a\|_\infty}{2h}\int_{0}^{L}u^2(x)dx.
%\end{align*}
%From Young's inequality we obtain
\begin{align*}
	(\mathcal{A}U,U)_{\mathcal{H}} \leq & -\frac{1}{2}u_{xx}^{2}(0)+\left(-\mu_1+\frac{\mu_2}{2}\right)\int_{0}^{L}a(x)u^2(x)dx+\frac{\xi\|a\|_\infty}{2h}\int_{0}^{L}u^2(x)dx\\
	&+\left(\frac{\mu_2}{2}-\frac{\xi}{2h}\right)\|a\|_\infty\int_{0}^{L}z^2(x,1)dx\\
	\leq & \frac{\xi\|a\|_\infty}{2h}\int_{0}^{L}u^2(x)dx.
\end{align*}
Hence, for $\lambda=\frac{\xi\|a\|_\infty}{2h}$ we have
\begin{equation*}
	((\mathcal{A}-\lambda I)U,U)_{\mathcal{H}}\leq 0.
\end{equation*}

Now, let $U =(u,z) \in \mathscr{D}(\mathcal{A}^*)$, then analogously as done previously, we get
%\begin{align*}
%	(\mathcal{A}^*U,U)_{\mathcal{H}} 
%	%= & ((u_x+u_{xxx}-u_{xxxxx}-a(x)\mu_1 u+\frac{\xi\|a\|_\infty}{h}z(\cdot,0),h^{-1}z_\rho),(u,z))_{\mathcal{H}}\\
%%	= & -\int_{0}^{L} u_{xxxxx}udx+\int_{0}^{L} u_{xxx}u dx+\int_{0}^{L} u_x u dx-\int_{0}^{L}a(x)\mu_1 u^2 dx\\
%%	&+\frac{\xi\|a\|_\infty}{h}\int_{0}^{L}z(x, 0) u dx+\frac{\xi \|a\|_\infty}{h}\int_{0}^{1}\int_{0}^{L} z_\rho z d\rho dx\\
%%	= & -u_{4x}u|_{0}^{L}+\int_{0}^{L} u_{xxxx}u_x dx+u_{xx}u|_{0}^{L}-\int_{0}^{L}u_{xx}u_x dx+\int_{0}^{L}\frac{1}{2}(u^2)_xdx\\
%%	&- \int_{0}^{L} a(x)\mu_1 u^2 dx+\frac{\xi\|a\|_\infty}{h}\int_{0}^{L}z(x,0)u dx+\frac{\xi \|a\|_\infty}{h}\int_{0}^{L}\int_{0}^{1}\frac{1}{2}(z^2)_\rho d\rho dx\\
%%	= &  u_{xxx}u_{x}|_{0}^{L}-\int_{0}^{L}u_{xxx}u_{xx}dx-\int_{0}^{L}\frac{1}{2}(u_x^2)_xdx-\int_{0}^{L}a(x)\mu_1u^2dx\\
%%	&+\frac{\xi\|a\|_\infty}{h}\int_{0}^{L}z(x,0)udx+\frac{\xi\|a\|_\infty}{2h}\int_{0}^{L}\left[z^2(x,1)-z^2(x,0)\right]dx\\
%%	= & -\int_{0}^{L}u_{xxx}u_{xx}dx-\int_{0}^{L}\frac{1}{2}(u_x^2)_xdx-\int_{0}^{L}a(x)\mu_1u^2dx\\
%	%&+\frac{\xi\|a\|_\infty}{h}\int_{0}^{L}z(x,0)udx+\frac{\xi\|a\|_\infty}{2h}\int_{0}^{L}\left(-\frac{a(x)\mu_2h}{\xi\|a\|_\infty}u(x)\right)^2dx -\frac{\xi\|a\|_\infty}{2h}\int_{0}^{L}z^2(x,0)dx\\
%	=& -\frac{1}{2}u_{xx}^2(L)-\int_{0}^{L}a(x)\mu_1u^2dx+\frac{\xi\|a\|_\infty}{h}\int_{0}^{L}z(x,0)udx\\
%	&+\frac{h \mu_2^2}{2\xi\|a\|_\infty}\int_{0}^{L}a^2(x)u^2(x)dx -\frac{\xi\|a\|_\infty}{2h}\int_{0}^{L}z^2(x,0)dx\\
%	\end{align*}
%Therefore, 
\begin{align*}
	(\mathcal{A}^*U,U)_{\mathcal{H}} \leq &  
	%-\frac{1}{2}u_{xx}^2(L) -\int_{0}^{L}a(x)\mu_1u^2dx+\frac{\xi\|a\|_\infty}{2h}\int_{0}^{L}z^2(x,0)dx\\
	%&+\frac{\xi\|a\|_\infty}{2h}\int_{0}^{L}u^2(x)dx+\frac{h \mu_2^2}{2\xi}\int_{0}^{L}a(x)u^2(x)dx -\frac{\xi\|a\|_\infty}{2h}\int_{0}^{L}z^2(x,0)dx\\
	  -\frac{1}{2}u_{xx}^2(L)+ \left(-\mu_1+\frac{\mu_2^2h}{2\xi}\right)\int_{0}^{L}a(x)u^2(x)dx+\frac{\xi \|a\|_\infty}{2h}\int_{0}^{L}u^2(x)dx.
\end{align*}
So, the following relation, 
\begin{equation*}
	\mu_2<\frac{\xi}{h} \Rightarrow 2 \mu_2<\frac{2\xi}{h} \Rightarrow \frac{2}{\mu_2}<\frac{2\xi}{h\mu_2^2} \Rightarrow \frac{\mu_2^2h}{2\xi}<\frac{\mu_2}{2}
\end{equation*}
yields that
\begin{align*}
	(\mathcal{A}^*U,U)_{\mathcal{H}} \leq & -\frac{1}{2}u_{xx}^2(L)+ \left(-\mu_1+\frac{\mu_2}{2}\right)\int_{0}^{L}a(x)u^2(x)dx+\frac{\xi \|a\|_\infty}{2h}\int_{0}^{L}u^2(x)dx\\ \leq & \ \frac{\xi \|a\|_\infty}{2h}\int_{0}^{L}u^2(x)dx.
\end{align*}
Hence,
\begin{equation*}
	((\mathcal{A}-\lambda I)^*U,U)_{\mathcal{H}} \leq 0,
\end{equation*}
for all $U \in \mathscr{D}(\mathcal{A}^*)$. Finally, since $\mathcal{A}-\lambda I$ is densely defined closed linear operator, and both $\mathcal{A}-\lambda I$ and $(\mathcal{A}-\lambda I)^*$ are dissipative, then $\mathcal{A}$ is the infinitesimal generator of a $C_0$-semigroup on $\mathcal{H}$ (see, for instance, \cite[Corollary 4.4]{Pazy} and \cite[Remark before Corollary 3.8]{Pazy}).
\end{proof}

The following theorem gives the existence of solutions for the abstract system \eqref{J10}.
\begin{theorem}
	Assume that $a \in L^\infty(\Omega)$ and \eqref{2.7} holds. Then, for each $U_0 \in \mathcal{H}$ there exists a unique mild solution $U \in C([0,\infty),\mathcal{H})$ for the system \eqref{J10}. Moreover, if $U_0 \in \mathscr{D}(\mathcal{A})$ the solutions are classical and satisfies the following regularity
	\begin{equation}\label{cCauhy}
	U \in C([0,\infty),\mathscr{D}(\mathcal{A}))\cap C^1([0,\infty),\mathcal{H}).
	\end{equation}
\end{theorem}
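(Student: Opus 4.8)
The plan is to deduce this theorem directly from Proposition \ref{Agi} together with the classical generation theory for $C_0$-semigroups, so that essentially no new analytic work is needed. By Proposition \ref{Agi}, the operator $\mathcal{A}$ is the infinitesimal generator of a $C_0$-semigroup $(S(t))_{t\geq 0}$ on $\mathcal{H}$; moreover, the proof of that proposition shows that $\mathcal{A}-\lambda I$ with $\lambda=\frac{\xi\|a\|_\infty}{2h}$ is densely defined, closed and dissipative along with its adjoint, so by Lumer--Phillips $\mathcal{A}-\lambda I$ generates a contraction semigroup and hence $\|S(t)\|_{\mathcal{L}(\mathcal{H})}\leq e^{\lambda t}$ for all $t\geq 0$.

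First I would treat the case $U_0\in\mathcal{H}$ by simply setting $U(t):=S(t)U_0$. From the defining properties of a $C_0$-semigroup it follows immediately that $U\in C([0,\infty);\mathcal{H})$, that $U(0)=U_0$, and that $U$ is a mild solution of \eqref{J10} in the usual sense that $\int_0^t U(s)\,ds\in\mathscr{D}(\mathcal{A})$ and $U(t)=U_0+\mathcal{A}\int_0^t U(s)\,ds$. Uniqueness in the class $C([0,\infty);\mathcal{H})$ is the standard fact that mild solutions of the homogeneous abstract Cauchy problem associated with a generator are unique; see, e.g., \cite[Chapter 4]{Pazy}. This establishes the first assertion for every $U_0\in\mathcal{H}$.

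Next, for $U_0\in\mathscr{D}(\mathcal{A})$ I would invoke the classical regularity statement for generators of $C_0$-semigroups: the map $t\mapsto S(t)U_0$ is continuously differentiable on $[0,\infty)$, one has $S(t)U_0\in\mathscr{D}(\mathcal{A})$ for all $t\geq 0$, and $\frac{d}{dt}S(t)U_0=\mathcal{A}S(t)U_0=S(t)\mathcal{A}U_0$. This yields at once $U\in C^1([0,\infty);\mathcal{H})$, and continuity of $U$ into $\mathscr{D}(\mathcal{A})$ equipped with the graph norm follows from the estimate $\|U(t)-U(s)\|_{\mathscr{D}(\mathcal{A})}=\|S(t)U_0-S(s)U_0\|_{\mathcal{H}}+\|S(t)\mathcal{A}U_0-S(s)\mathcal{A}U_0\|_{\mathcal{H}}\to 0$ as $s\to t$, using the strong continuity of $S(\cdot)$ applied to the two fixed vectors $U_0$ and $\mathcal{A}U_0$ of $\mathcal{H}$. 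Hence \eqref{cCauhy} holds and $U$ is a classical solution of \eqref{J10}.

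The only point that calls for a little care, rather than being a genuine obstacle, is the passage between the abstract problem \eqref{J10} and the original coupled system: one should observe that membership of $U(t)=(u(\cdot,t),z(\cdot,\cdot,t))$ in $\mathscr{D}(\mathcal{A})$ for all $t$ encodes exactly the boundary conditions $u(0,t)=u(L,t)=u_x(0,t)=u_x(L,t)=u_{xx}(L,t)=0$ together with the compatibility relation $z(x,0,t)=u(x,t)$, and that the equation $\frac{\partial U}{\partial t}=\mathcal{A}U$ reproduces both the Kawahara equation with the damping/delay term $a(x)(\mu_1 u+\mu_2 z(1))$ and the transport equation $hz_t+z_\rho=0$ of \eqref{T1}; all of this is immediate from the definitions \eqref{A}--\eqref{2.13}. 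Since the whole analytic substance has already been placed in Proposition \ref{Agi}, I expect no real difficulty in this last bookkeeping step.
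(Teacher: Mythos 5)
Your proposal is correct and follows the same route as the paper: the paper's proof simply states that the theorem is a direct consequence of Proposition \ref{Agi}, i.e.\ of the fact that $\mathcal{A}$ generates a $C_0$-semigroup, combined with the standard semigroup theory from \cite{Pazy} that you spell out. Your write-up merely makes explicit the classical details (mild solution $U(t)=S(t)U_0$, uniqueness, and classical regularity for $U_0\in\mathscr{D}(\mathcal{A})$) that the paper leaves implicit.
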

\begin{proof}
	The result is a direct consequence of Proposition \ref{Agi}.
\end{proof}

For $T>0$, $L>0$ let us introduce the following set
\begin{equation}\label{spaceB}
\mathcal{B}=C([0,T],L^2(0,L))\cap L^2(0,T,H_{0}^{2}(0,L))
\end{equation}
endowed with its natural norm
\begin{equation}\label{normB}
\|y\|_{\mathcal{B}}=\max_{t \in [0,T]}\|y(\cdot,t)\|_{L^2(0,L)}+\left( \int_{0}^{T}\|y(\cdot,t)\|_{H^2(0,L)}^2dt\right)^{\frac{1}{2}}.
\end{equation}
Next results are devoted to show \textit{a priori} and regularity estimates for the solutions of \eqref{J10}.

\begin{proposition}\label{P5JL}
Let $a \in L^\infty(\Omega)$ and consider that \eqref{2.7} holds. Then, for any mild solution of \eqref{J10} the energy $E_u$, defined by \eqref{J6}, is non-increasing and there exists a positive constant $C$ such that
\begin{equation}\label{L24L}
	E_u'(t)\leq-C\left[u_{xx}^2(0)+\int_{0}^{L}a(x)u^2(x)dx+\int_{0}^{L}a(x)u^2(x,t-h)dx\right]
\end{equation}
where $C$ is given by
\begin{equation}\label{C}
	C=\min\left\{\frac{1}{2},\mu_1-\frac{\xi}{2h}-\frac{\mu_2}{2},-\frac{\mu_2}{2}+\frac{\xi}{2h}\right\}.
\end{equation}
\end{proposition}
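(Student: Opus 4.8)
The plan is to prove the inequality first for classical solutions, that is for initial data in $\mathscr{D}(\mathcal{A})$, where the regularity \eqref{cCauhy} makes every manipulation below legitimate, and then to pass to general mild solutions by density. Throughout I set $z(x,\rho,t)=u(x,t-\rho h)$, so that by \eqref{J6} one has $E_u(t)=\tfrac12\int_0^L u^2(x,t)\,dx+\tfrac{\xi}{2}\int_0^L\int_0^1 a(x)z^2(x,\rho,t)\,d\rho\,dx$ and $z$ solves the transport problem \eqref{T1}. I would then differentiate $E_u$ in $t$ and treat the two pieces separately.

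For the $L^2$--piece, substituting $u_t$ from the first line of \eqref{J9} and integrating by parts using the boundary conditions in \eqref{2.13} gives $\int_0^L uu_x\,dx=\int_0^L uu_{xxx}\,dx=0$ and, after peeling off four boundary terms that all vanish because $u(0)=u(L)=u_x(0)=u_x(L)=u_{xx}(L)=0$, the crucial identity $\int_0^L uu_{xxxxx}\,dx=-\tfrac12 u_{xx}^2(0)$. Hence
\[
\frac{d}{dt}\,\frac12\int_0^L u^2\,dx=-\frac12 u_{xx}^2(0)-\mu_1\int_0^L a(x)u^2\,dx-\mu_2\int_0^L a(x)\,u(x,t)\,z(1)\,dx .
\]
For the delayed piece, using $hz_t=-z_\rho$ and $z(x,0,t)=u(x,t)$,
\[
\frac{d}{dt}\,\frac{\xi}{2}\int_0^L\int_0^1 a(x)z^2\,d\rho\,dx=-\frac{\xi}{2h}\int_0^L a(x)\bigl(z^2(x,1)-u^2(x)\bigr)\,dx .
\]
Adding these and bounding the cross term by Young's inequality, $\mu_2\bigl|\int_0^L a\,u\,z(1)\,dx\bigr|\le\tfrac{\mu_2}{2}\int_0^L a\,u^2\,dx+\tfrac{\mu_2}{2}\int_0^L a\,z^2(1)\,dx$, I obtain
\[
E_u'(t)\le-\frac12 u_{xx}^2(0)-\Bigl(\mu_1-\frac{\mu_2}{2}-\frac{\xi}{2h}\Bigr)\int_0^L a(x)u^2\,dx-\Bigl(\frac{\xi}{2h}-\frac{\mu_2}{2}\Bigr)\int_0^L a(x)z^2(x,1)\,dx .
\]

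Since $z(x,1,t)=u(x,t-h)$, it remains only to check that the two coefficients above are nonnegative, and this is exactly the content of \eqref{2.7}: the left inequality $\mu_2<\xi/h$ gives $\tfrac{\xi}{2h}-\tfrac{\mu_2}{2}>0$, while the right inequality $\xi/h<2\mu_1-\mu_2$ gives $\mu_1-\tfrac{\mu_2}{2}-\tfrac{\xi}{2h}>0$. This yields \eqref{L24L} with $C$ given by \eqref{C}, and in particular $E_u'(t)\le0$, so $E_u$ is non-increasing along classical solutions.

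Finally I would extend the conclusion to mild solutions. Since mild solutions depend continuously on the data in $C([0,T],\mathcal{H})$ and classical solutions are dense in $\mathcal{H}$, $E_u$ is non-increasing for every mild solution by passing to the limit in $E_u^n(t)\le E_u^n(s)$; to obtain the quantitative bound \eqref{L24L} one passes to the limit in its integrated form $E_u^n(t)-E_u^n(0)\le-C\int_0^t[\,\cdots\,]\,ds$. The one point needing care here is that the boundary trace $u_{xx}(0,\cdot)$ is not controlled by the $\mathcal{H}$--norm, so the limit of the right-hand side must be handled through the hidden (Kato-type) smoothing of the Kawahara operator, exactly as in \cite{CaArDo}; I expect this density/trace step to be the only non-routine part of the argument, everything else being the elementary computation above.
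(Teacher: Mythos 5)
Your proposal is correct and follows essentially the same route as the paper: multiply the Kawahara equation by $u$ (yielding the boundary term $-\tfrac12 u_{xx}^2(0)$), multiply the transport equation by the weighted delayed state, add, apply Young's inequality to the cross term, and use \eqref{2.7} to make the coefficients nonnegative, giving exactly \eqref{L24L} with \eqref{C}. Your closing density/trace discussion for passing from classical to mild solutions is a reasonable extra layer of care that the paper leaves implicit, but it does not change the argument.
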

\begin{proof}
Multiplying $\eqref{J9}_1$ by $u(x,t)=z(x,0,t)$ and integrating over $(0,L)$ we infer that
\begin{equation}\label{2.45}
	\begin{gathered}
		\frac{1}{2}\frac{d}{dt} \| u(t)\|_{L^2(0,L)} =-\frac{1}{2}u_{xx}^{2}(0)- \mu_1\int_{0}^{L}a(x) u^2(x,t)dx-\mu_2\int_{0}^{L}a(x)u(x,t-h)u(x,t)dx.
	\end{gathered}
\end{equation}
Now, Multiplying $\eqref{J9}_4$ by $a(x)\xi u(x,t-\rho h)$ and integrating over $(0,L) \times (0,1)$ we obtain,
\begin{equation}\label{2.47}
	\begin{split}
		\frac{\xi h}{2}\frac{d}{dt} \int_{0}^{L}a(x)\int_{0}^{1}u^2(x,t-\rho h) \, d \rho dx =
		%{}&- \int_{0}^{1}\xi \int_{0}^{L}a(x) \partial_\rho z(x,\rho,t)z(x,\rho,t) \, d\rho dx \\
	%	={}& -\int_{0}^{L}a(x)\xi\int_{0}^{1} \partial_\rho z(x,\rho,t) z(x,\rho,t)  \, d\rho dx \\
		{}&-\int_{0}^{L} a(x)\frac{\xi}{2} \int_{0}^{1} \frac{d}{d\rho } (z(x,\rho,t))^2\, d\rho dx\\
		={}&-\int_{0}^{L} a(x)\frac{\xi}{2} [(z(x,1,t))^2-(z(x,0,t))^2  ] \,dx\\
		={}&\frac{\xi}{2}\int_{0}^{L} a(x)[(z(x,0,t))^2  -(z(x,1,t))^2] \,dx.
	\end{split}
\end{equation}
From \eqref{2.45}, \eqref{2.47}  and applying Young's inequality we obtain
%\begin{equation}\label{Ederivative}
%	\begin{split}
%E_u'(t)=&-\frac{1}{2}u_{xx}^{2}(0)- \mu_1\int_{0}^{L}a(x) u^2(x,t)dx-\mu_2\int_{0}^{L}a(x)u(x,t-h)u(x,t)dx \\
%&+\frac{\xi}{2h}\int_{0}^{L}a(x)u^2(x,t)dx-\frac{\xi}{2h}\int_{0}^{L}a(x)u^2(x,t-h)dx.
%	\end{split}
%\end{equation}
%Employing the Young's inequality in \eqref{Ederivative} we obtain
\begin{align*}
	E_u'(t)\leq & -\frac{1}{2}u_{xx}^{2}(0)-\mu_1\int_{0}^{L}a(x) u^2(x,t)dx+\frac{\mu_2}{2}\int_{0}^{L}a(x)u^2(x,t)dx \nonumber\\
	&+\frac{\mu_2}{2}\int_{0}^{L}a(x)u^2(x,t-h)dx+\frac{\xi}{2h}\int_{0}^{L}a(x)u^2(x,t)dx-\frac{\xi}{2h}\int_{0}^{L}a(x)u^2(x,t-h)dx
\end{align*}
and the results holds directly from the previous estimate.
\end{proof}

\begin{proposition}\label{P1J}
	Assume that $a \in L^\infty(\Omega)$ and \eqref{2.13} holds. Then, the map
	\begin{equation}\label{J11}
	(u_0,z_0(\cdot,-h(\cdot)) \mapsto (u,z)
	\end{equation}
is continuous from $\mathcal{H}$ to $\mathcal{B}\times C([0,T],L^2((0,L)\times(0,1))$, and for $(u_0,z_0(\cdot,-h(\cdot))) \in \mathcal{H}$, the following estimates hold
\begin{equation}\label{J12}
\begin{split}
	\frac{1}{2}\int_{0}^{L}u^2(x,t) dx +\frac{\xi}{2}\int_{0}^{L}\int_{0}^{1}a(x)u^2(x,t-\rho h)d\rho dx \leq &\frac{1}{2}\int_{0}^{L}u_0^2(x)dx\\&+\frac{\xi}{2}\int_{0}^{L}\int_{0}^{1}a(x)z_0^2(x,-\rho h )d \rho dx
	\end{split}
\end{equation}
and
\begin{equation}\label{J13}
\begin{split}
	\|u_0\|_{L^2(0,L)}^2\leq & \frac{1}{T}\int_{0}^{T}\int_{0}^{L}u^2dxdt+ \int_{0}^{T}u_{xx}^{2}(0)dt  \\
	&+(2\mu_1+\mu_2)\int_{0}^{T}\int_{0}^{L}a(x)u^2dxdt+\int_{0}^{T}\int_{0}^{L}a(x)\mu_2u^2(x,t-h)dxdt.
	\end{split}
\end{equation}
\end{proposition}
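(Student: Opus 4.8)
The plan is to establish the three assertions in turn, starting from the two that follow quickly from what has already been proved. For the energy inequality \eqref{J12} I would simply integrate the differential inequality of Proposition \ref{P5JL}: if $U_0\in\mathscr{D}(\mathcal A)$ the solution is classical, $E_u$ is absolutely continuous, and \eqref{L24L} gives $E_u'(t)\le 0$, so integrating on $(0,t)$ yields $E_u(t)\le E_u(0)$, which, after expanding the definition \eqref{J6} at $t$ and at $0$ and recalling $u(\cdot,-\rho h)=z_0(\cdot,-\rho h)$, is exactly \eqref{J12}. The general case $U_0\in\mathcal H$ then follows by a density argument, both sides being continuous for the topology of $\mathcal H$.

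For \eqref{J13} I would again work with classical solutions and pass to the limit, starting from the $L^2$-identity \eqref{2.45}, i.e.
\[
\frac{d}{dt}\|u(t)\|_{L^2(0,L)}^2=-u_{xx}^2(0)-2\mu_1\int_0^L a(x)u^2(x,t)\,dx-2\mu_2\int_0^L a(x)u(x,t-h)u(x,t)\,dx .
\]
Multiplying this by the weight $(T-t)$, integrating over $(0,T)$ and using the elementary identity $\int_0^T(T-t)\frac{d}{dt}\|u\|^2\,dt=-T\|u_0\|^2+\int_0^T\|u\|^2\,dt$, one obtains
\[
T\|u_0\|_{L^2(0,L)}^2=\int_0^T\|u(t)\|_{L^2(0,L)}^2\,dt+\int_0^T(T-t)\Big[u_{xx}^2(0)+2\mu_1\!\int_0^L\! a\,u^2+2\mu_2\!\int_0^L\! a\,u(t-h)u(t)\Big]\,dt .
\]
Bounding $0\le T-t\le T$ and estimating the cross term by Young's inequality, $2\mu_2 a\,u(t-h)u(t)\le\mu_2 a\,u^2(t)+\mu_2 a\,u^2(t-h)$, then dividing by $T$, produces \eqref{J13} verbatim.

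The continuity of the solution map is the substantive part. By linearity of \eqref{J9}, continuity is equivalent to the a priori bound $\|(u,z)\|_{\mathcal B\times C([0,T],L^2((0,L)\times(0,1)))}\le C\|(u_0,z_0(\cdot,-h(\cdot)))\|_{\mathcal H}$. The $C([0,T],L^2)$-pieces are essentially free: the mild solution already lies in $C([0,T],\mathcal H)$ with $\|(u,z)(t)\|_{\mathcal H}\le C(T)\|U_0\|_{\mathcal H}$, which controls $\sup_{[0,T]}\|u(t)\|_{L^2(0,L)}$, and the $z$-norm can also be read off directly from $z(x,\rho,t)=u(x,t-\rho h)$, since $\|z(\cdot,\cdot,t)\|_{L^2((0,L)\times(0,1))}^2=h^{-1}\int_{t-h}^{t}\|u(s)\|_{L^2(0,L)}^2\,ds$ is bounded on $[0,T]$ by \eqref{J12} together with the data on $(-h,0)$. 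What is left is the smoothing estimate $u\in L^2(0,T;H_0^2(0,L))$, which I would obtain from the Morawetz-type multiplier $x\,u$: multiplying $\eqref{J9}_1$ by $x\,u(x,t)$ and integrating over $(0,L)$, all boundary terms vanish because $u(0)=u(L)=u_x(0)=u_x(L)=u_{xx}(L)=0$, and the integrations by parts lead to
\[
\frac12\frac{d}{dt}\int_0^L x u^2\,dx+\frac32\int_0^L u_x^2\,dx+\frac52\int_0^L u_{xx}^2\,dx=\frac12\int_0^L u^2\,dx-\int_0^L x\,a(x)\big(\mu_1 u+\mu_2 z(1)\big)u\,dx .
\]
Integrating in time over $(0,T)$, discarding $\tfrac12\int_0^L x u^2(\cdot,T)\ge 0$, and estimating the right-hand side by $\tfrac{L}{2}\|u_0\|_{L^2(0,L)}^2+C\int_0^T\big(\|u(t)\|_{L^2(0,L)}^2+\|z(1,t)\|_{L^2(0,L)}^2\big)\,dt$, where the last integral equals $\int_{-h}^{T-h}\|u(s)\|_{L^2(0,L)}^2\,ds$ and is bounded by \eqref{J12} and the data, I get $\int_0^T\!\int_0^L (u_{xx}^2+u_x^2)\,dx\,dt\le C\|(u_0,z_0(\cdot,-h(\cdot)))\|_{\mathcal H}^2$; combined with the $\sup$-bound on $\|u\|_{L^2(0,L)}$ this is the $\mathcal B$-estimate. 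A final density argument transfers all the estimates from $\mathscr D(\mathcal A)$ to $\mathcal H$.

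I expect the main obstacle to be precisely this last step: carrying out the repeated integrations by parts in the $x\,u$-multiplier identity so that the fifth-order term yields $\tfrac52\int_0^L u_{xx}^2$ with no surviving boundary contribution at $x=0$, and keeping the delayed term $a(x)\mu_2 z(1)=a(x)\mu_2 u(\cdot,t-h)$ under control by shifting time and invoking \eqref{J12} on $(-h,0)$ and on $(0,T)$. Everything else is either an immediate consequence of Proposition \ref{P5JL} or a routine weighted energy computation.
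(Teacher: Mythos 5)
Your proposal is correct and follows essentially the same route as the paper: \eqref{J12} is obtained by integrating the decay estimate of Proposition \ref{P5JL}, \eqref{J13} by the weight $T-t$ (the paper phrases this as the multiplier $q(x,t)=T-t$ in its identity \eqref{J17}), and the $L^2(0,T;H^2_0)$ bound by the Morawetz multiplier $x\,u$, with the delayed term controlled exactly as the paper does via the identity relating $\int_0^T\|u(\cdot,t-h)\|^2_{L^2}\,dt$ to $\int_0^T\|u(\cdot,t)\|^2_{L^2}\,dt$ plus the history data (the paper derives this with the multiplier $p\equiv1$ on the transport equation, you read it off from $z(x,\rho,t)=u(x,t-\rho h)$ — same content). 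No gaps worth noting.
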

\begin{proof}
First, note that \eqref{J12} follows from Proposition \ref{L24L}.
%we have that
%\begin{equation*}
%\begin{split}
%	\frac{1}{2}\int_{0}^{L}u^2(x,t) dx +\frac{\xi}{2}\int_{0}^{L}\int_{0}^{1}a(x)u^2(x,t-\rho h)d\rho dx \leq & \frac{1}{2}\int_{0}^{L}u_0^2(x)dx+\frac{\xi}{2}\int_{0}^{L}\int_{0}^{1}a(x)u^2(x,-\rho h )d \rho dx  \\
%	= & \frac{1}{2}\int_{0}^{L}u_0^2(x)dx+\frac{\xi}{2}\int_{0}^{L}\int_{0}^{1}a(x)z_0^2(x,-\rho h )d \rho dx.
%	\end{split}
%\end{equation*}
Now, let $p \in C^\infty([0,1]\times [0,T])$, $q \in C^\infty([0,L] \times [0,T])$ and $(u_0,z_0(\cdot,-h(\cdot)))$. Then multiplying $\eqref{J9}_4$ by $p(\rho,t)z(x,\rho,t)$, and using integration by parts we get
%%\begin{gathered}	hp(\rho,t)z_t(x,\rho,t)z(x,\rho,t)+z_\rho(x,\rho,t)z(x,\rho,t)p(\rho,t)=0\\%\int_{0}^{T}\int_{0}^{L}\int_{0}^{1}hp(\rho,t)z_t z+z_\rho z p(\rho,t)d\rho dx dt=0\\
%%h\int_{0}^{L}\int_{0}^{1}\int_{0}^{T}p(\rho,t)z_t zdtd\rho dx+\int_{0}^{T}\int_{0}^{L}\int_{0}^{1}z_\rho z p(\rho,t)d\rho dx dt=0\\
%%h\int_{0}^{L}\int_{0}^{1}\int_{0}^{T}p(\rho,t)\frac{1}{2}(z^2)_tdtd\rho dx+\int_{0}^{T}\int_{0}^{L}\int_{0}^{1}p(\rho,t)\frac{1}{2}(z^2)_\rho d\rho dx dt=0\\
%%h\int_{0}^{L}\int_{0}^{1}p(\rho,T)z^2(x,\rho,T)-p(\rho,0)z^2(x,\rho,0)d\rho dx-h\int_{0}^{L}\int_{0}^{1}\int_{0}^{T}p_tz^2(x,\rho,t)dt d\rho dx\\
%%+\int_{0}^{T}\int_{0}^{L}p(1,t)z^2(x,1,t)-p(0,t)z^2(x,0,t)dx dt-\int_{0}^{T}\int_{0}^{L}\int_{0}^{1}p_\rho z^2(x,\rho,t)d\rho dx dt=0\\
%\int_{0}^{1}\int_{0}^{L}p(\rho,T)z^2(x,\rho,T)-p(\rho,0)z^2(x,\rho,0)dxd\rho-\frac{1}{h}\int_{0}^{T}\int_{0}^{1}\int_{0}^{L}(hp_t+p_\rho)z^2dxd\rho dt\\
%+\frac{1}{h}\int_{0}^{T}\int_{0}^{L}p(1,t)z^2(x,1,t)-p(0,t)z^2(x,0,t)dxdt=0.
%\end{gathered}
\begin{equation}\label{J16}
\begin{split}
\int_{0}^{1}\int_{0}^{L}&p(\rho,T)z^2(x,\rho,T)-p(\rho,0)z^2(x,\rho,0)dxd\rho-\frac{1}{h}\int_{0}^{T}\int_{0}^{1}\int_{0}^{L}(hp_t+p_\rho)z^2dxd\rho dt\\
&+\frac{1}{h}\int_{0}^{T}\int_{0}^{L}p(1,t)z^2(x,1,t)-p(0,t)z^2(x,0,t)dxdt=0.
\end{split}
\end{equation}
Taking $p(\rho,t)=1$ in \eqref{J16} we obtain
\begin{equation*}
	h\int_{0}^{1}\int_{0}^{L}z^2(x,\rho,T)-z^2(x,\rho,0)dxd\rho+\int_{0}^{T}\int_{0}^{L}z^2(x,1,t)-z^2(x,0,t)dxdt=0.
\end{equation*}
which implies that
\begin{align}\label{I5}
\frac{1}{2}\int_{0}^{T}\int_{0}^{L}u^2(x,t-h)dxdt\leq & \frac{1}{2}\int_{0}^{T}\int_{0}^{L}u^2(x,t)dxdt+\frac{h}{2} \int_{0}^{1} \int_{0}^{L}z_{0}^2(x,-\rho h)dxd\rho.
\end{align}

Now, multiplying $\eqref{J9}_1$ by $q(x,t)u(x,t)$ and integrating by parts we have 
% using $I_1$, $I_2$, $I_3$ and $I_4$, we obtain 
\begin{equation}\label{J17}
\begin{split}
&	\frac{1}{2}\int_{0}^{L} q(x,T)u^2(x,T)dx-\frac{1}{2}\int_{0}^{L}q(x,0)u^2(x,0)dx\\
&	-\frac{1}{2}\int_{0}^{T}\int_{0}^{L} (q_t+q_x+q_{xxx}-q_{xxxxx})u^2dxdt+\frac{3}{2}\int_{0}^{T}\int_{0}^{L}q_xu_x^2dxdt\\
&	-\frac{5}{2}\int_{0}^{T}\int_{0}^{L}q_{xxx}u_x^2dxdt+\frac{5}{2}\int_{0}^{T}\int_{0}^{L}q_xu_{xx}^{2}dxdt+\frac{1}{2}\int_{0}^{T}q(0,t)u_{xx}^{2}(0,t)dt\\
&	+\int_{0}^{T}\int_{0}^{L}a(x)\mu_1qu^2dxdt+\int_{0}^{T}\int_{0}^{L}a(x)\mu_2qu(x,t-h)udxdt=0.
\end{split}
\end{equation}
Taking $q(x,t)=x$ in \eqref{J17} 
%we can write
%\begin{equation*}
%\begin{gathered}
%	\frac{1}{2}\int_{0}^{L}x(u^2(x,T)-u_0^2(x))dx-\frac{1}{2}\int_{0}^{T}\int_{0}^{L}u^2 dxdt+\frac{3}{2}\int_{0}^{T} u_{x}^{2}dxdt\\
%	+\frac{5}{2}\int_{0}^{T}\int_{0}^{L}u_{xx}^{2}dxdt+\int_{0}^{T}\int_{0}^{L}xa(x)\mu_1u^2dxdt+\int_{0}^{T}\int_{0}^{L}xa(x)\mu_2u(x,t-h)u(x,t)dxdt=0.
%\end{gathered}
%\end{equation*}
follows from \eqref{I5} that
\begin{equation*}
\begin{split}
	\frac{3}{2}\int_{0}^{T}\int_{0}^{L} u_x^2dx&+\frac{5}{2}\int_{0}^{T}\int_{0}^{L} u_{xx}^{2}dx = \frac{1}{2}\int_{0}^{L}x(u_{0}^{2}(x)-u^2(x,T))dx+\frac{1}{2}\int_{0}^{T}\int_{0}^{L}u^2 dxdt\\
	&-\int_{0}^{T}\int_{0}^{L}xa(x)\mu_1u^2 dxdt -\int_{0}^{T} \int_{0}^{L}xa(x)\mu_2u(x,t-h)u(x,t)dxdt\\
%	\leq &\frac{L}{2}\|u_0\|_{L^2(0,L)}^2+\frac{1}{2}\int_{0}^{T}\int_{0}^{L}u^2(x,t)dxdt+L\|a\|_\infty\mu_1\int_{0}^{T}\int_{0}^{L}u^2(x,t)dxdt\\
%	&+\frac{L}{2}\|a\|_\infty\mu_2\int_{0}^{L}\int_{0}^{T}u^2(x,t-h)dxdt+\frac{L}{2}\|a\|_\infty\mu_2\int_{0}^{T}\int_{0}^{L}u^2(x,t)dxdt\\
%	\leq & \frac{L}{2}\|u_0\|_{L^2(0,L)}^2+\frac{1}{2}\int_{0}^{T}\int_{0}^{L}u^2(x,t)dxdt+L\|a\|_\infty\mu_1\int_{0}^{T}\int_{0}^{L}u^2(x,t)dxdt\\
%	&+\frac{L}{2}\|a\|_\infty\mu_2\int_{0}^{T}\int_{0}^{L}u^2(x,t)dxdt+\frac{L}{2}h\|a\|_\infty\mu_2 \int_{0}^{1} \int_{0}^{L}z_{0}^2(x,-\rho h)dxd\rho\\
%	&+\frac{L}{2}\|a\|_\infty\mu_2\int_{0}^{T}\int_{0}^{L}u^2(x,t)dxdt\\
	%\leq & \frac{L}{2}\|u_0\|_{L^2(0,L)}^2+\left(\frac{1}{2}+L\|a\|_\infty(\mu_1+\mu_2)\right)\int_{0}^{T}\int_{0}^{L}u^2(x,t)dxdt\\
	%&+\frac{L}{2}h\|a\|_\infty\mu_2 \int_{0}^{1} \int_{0}^{L}z_{0}^2(x,-\rho h)dxd\rho\\
	\leq & \frac{L}{2}\|u_0\|_{L^2(0,L)}^2+\frac{L}{2}h\|a\|_\infty\mu_2 \int_{0}^{1} \int_{0}^{L}z_{0}^2(x,-\rho h)dxd\rho\\
	&+\left(\frac{1}{2}+L\|a\|_\infty(\mu_1+\mu_2)\right)T \left( \int_{0}^{L}u_0^2(x)dx+\xi\|a\|_{\infty}\int_{0}^{L}\int_{0}^{1}z_0^2(x,-\rho h )d \rho dx \right)\\
%	=&\frac{L}{2}\|u_0\|_{L^2(0,L)}^2+\frac{L \xi}{2 \xi }h\|a\|_\infty\mu_2 \int_{0}^{1} \int_{0}^{L}z_{0}^2(x,-\rho h)dxd\rho\\
%	&+\left(\frac{1}{2}+L\|a\|_\infty(\mu_1+\mu_2)\right)T \left( \int_{0}^{L}u_0^2(x)dx+\xi\|a\|_{\infty}\int_{0}^{L}\int_{0}^{1}z_0^2(x,-\rho h )d \rho dx \right)\\
 \leq &C(a,h,\mu_1,\mu_2,\xi,L)(1+T)\|u_0,z_{0}(\cdot,-h(\cdot)\|_{L^2(0,L) \times L^2(0,1)}^2,
 \end{split}
\end{equation*}
where
\begin{equation*}
	C(a,h,\mu_1,\mu_2,\xi,L)=\left(\frac{L}{2}+\frac{Lh \mu_2}{2\xi}+1+2L\|a\|_\infty(\mu_1+\mu_2)\right).
\end{equation*}

Finally,  choosing $q(x,t)=T-t$ in \eqref{J17} we obtain
\begin{equation*}
	\begin{gathered}
-\frac{1}{2}\int_{0}^{L}Tu_0^2(x)dx+\frac{1}{2}\int_{0}^{T}\int_{0}^{L}u^2dxdt+\frac{1}{2}\int_{0}^{T}(T-t)u_{xx}^{2}(0)dt\\
+\int_{0}^{T}\int_{0}^{L}(T-t)a(x)\mu_1u^2dxdt+\int_{0}^{T}\int_{0}^{L}(T-t)a(x)\mu_2u(x,t)u(x,t-h)dxdt=0.
	\end{gathered}
\end{equation*}
Therefore,
%\begin{equation*}
%\begin{split}
%	T\|u_0\|_{L^2(0,L)}^2=&\int_{0}^{T}\int_{0}^{L}u^2dxdt+\int_{0}^{T}(T-t)u_{xx}^{2}(0)dt\\
%	&+2\int_{0}^{T}\int_{0}^{L}(T-t)a(x)\mu_1u^2dxdt+2\int_{0}^{T}\int_{0}^{L}(T-t)a(x)\mu_2u(x,t)u(x,t-h)dxdt\\
%	\leq & \int_{0}^{T}\int_{0}^{L}u^2dxdt+T \int_{0}^{T}u_{xx}^{2}(0)dt\\
%	&+2T\int_{0}^{T}\int_{0}^{L}a(x)\mu_1u^2dxdt+2T\int_{0}^{T}\int_{0}^{L}a(x)\mu_2u(x,t)u(x,t-h)dxdt\\
%	\leq & \int_{0}^{T}\int_{0}^{L}u^2dxdt+T \int_{0}^{T}u_{xx}^{2}(0)dt\\
%	&+2T\int_{0}^{T}\int_{0}^{L}a(x)\mu_1u^2dxdt+T\int_{0}^{T}\int_{0}^{L}a(x)\mu_2u^2(x,t)dxdt\\
%	&+T\int_{0}^{T}\int_{0}^{L}a(x)\mu_2u^2(x,t-h)dxdt,
% \end{split}
%\end{equation*}
%which implies that
\begin{align*}
\|u_0\|_{L^2(0,L)}^2
%\leq & \frac{1}{T}\int_{0}^{T}\int_{0}^{L}u^2dxdt+ \int_{0}^{T}u_{xx}^{2}(0)dt\\
%&+(2\mu_1+\mu_2)\int_{0}^{T}\int_{0}^{L}a(x)u^2dxdt+\int_{0}^{T}\int_{0}^{L}a(x)\mu_2u^2(x,t-h)dxdt\\
\leq & \frac{1}{T}\int_{0}^{T}\int_{0}^{L}u^2dxdt+ \int_{0}^{T}u_{xx}^{2}(0)dt\\
&+(2\mu_1+\mu_2)\int_{0}^{T}\int_{0}^{L}a(x)u^2dxdt+\|a\|_\infty \mu_2 \int_{0}^{T}\int_{0}^{L}u^2(x,t)dxdt\\
&+\frac{h}{2}\|a\|_\infty \mu_2 \int_{0}^{1} \int_{0}^{L}z_{0}^2(x,-\rho h)dxd\rho,
\end{align*}
%Consequently,
%\begin{align*}
%	\|u_0\|_{L^2(0,L)}^2\leq & \left( \frac{1}{T}+\|a\|_\infty \mu_2 \right)\int_{0}^{T}\int_{0}^{L}u^2dxdt+ \int_{0}^{T}u_{xx}^{2}(0)dt\\
%	&+(2\mu_1+\mu_2)\int_{0}^{T}\int_{0}^{L}a(x)u^2dxdt+\frac{h}{2}\|a\|_\infty \mu_2 \int_{0}^{1} \int_{0}^{L}z_{0}^2(x,-\rho h)dxd\rho.	
%\end{align*}
showing \eqref{J13}, and the proof is complete.
\end{proof}

\subsection{Linear system with source term} Consider the higher order KdV linear equation with a source term $f(x,t)$, in the right hand side:
\begin{equation}
	\begin{cases}\label{J19}
u_{t}(x,t)+u_{x}(x,t)+u_{xxx}(x,t)-u_{xxxxx}(x,t)\\ \quad  \quad  \quad  \quad  \quad  \quad  \quad  +a\left(x\right)(\mu_1 u(x,t)+\mu_2u(x,t-h))=f(x,t) & x \in (0,L),~ t>0,\\
		u\left(  0,t\right)  =u\left(  L,t\right)  =u_{x}\left(  0,t\right)
		=u_{x}\left(  L,t\right)  =u_{xx}\left(  L,t\right)  =0 & t>0,\\
		u\left(  x,0\right)  =u_{0}\left(  x\right)  & x \in (0,L),\\
		u(x,t)=z_0(x,t) & x \in (0,L),~t \in (-h,0),
	\end{cases}
\end{equation}
where $\mu_1>\mu_2$ and $a(x)$ satisfies the same hypothesis of the previous section. The next result deal with the existence of solution of this system.

\begin{proposition}\label{P2J}
	Assume that $a(x)\in L^\infty(\Omega)$ and \eqref{2.7} holds. For any $(u_0,z_0(\cdot,-h(\cdot)) \in \mathcal{H}$ and $f \in L^2(0,T,L^2(0,L))$, there exists a unique mild solution for \eqref{J19} in the class $$(u,u(\cdot,t-h(\cdot))) \in \mathcal{B} \times C([0,T],L^2((0,L)\times(0,1))).$$ Moreover, we have the following estimates
	\begin{equation}\label{J20}
		\|(u,z)\|_{C([0,T],\mathcal{H})} \leq e^{\frac{\xi \|a\|_\infty}{2h} T} \left(\|(u_0,z_0(\cdot,-h(\cdot)))\|_{\mathcal{H}}+\|f\|_{L^1(0,T,L^2(0,L))} \right)
	\end{equation}
and
\begin{equation}\label{J21}
\|u\|_{L^2(0,T,H^2(0,L))}^{2}\leq C(1+T+e^{\frac{\xi\|a\|_{\infty}}{h}T})\left(\|(u_0,z_0(\cdot,-h(\cdot)))\|_{\mathcal{H}}^{2}+\|f\|_{L^1(0,T,L^2(0,L))}^{2} \right),
\end{equation}
where
\begin{equation*}
	C=C(a,h,\mu_1,\mu_2,\xi,L)=\left(\frac{3L}{2}+\frac{Lh \mu_2}{2\xi}+1+2L\|a\|_\infty(\mu_1+\mu_2)\right).
\end{equation*}
\end{proposition}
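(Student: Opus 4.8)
The plan is to combine the semigroup theory established in Proposition \ref{Agi} with the a priori estimates of Proposition \ref{P1J}, applied to the inhomogeneous problem. First I would write the solution of \eqref{J19} via Duhamel's formula: setting $U(t) = (u(\cdot,t), z(\cdot,\cdot,t))$ and $F(t) = (f(\cdot,t),0)$, the mild solution is
\begin{equation*}
U(t) = S(t)U_0 + \int_0^t S(t-s)F(s)\,ds,
\end{equation*}
where $\{S(t)\}_{t\geq 0}$ is the $C_0$-semigroup generated by $\mathcal{A}$. Existence and uniqueness of $U \in C([0,T],\mathcal{H})$ are then immediate from standard semigroup theory (e.g. Pazy). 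Since Proposition \ref{Agi} shows $\mathcal{A}-\lambda I$ is dissipative with $\lambda = \frac{\xi\|a\|_\infty}{2h}$, the semigroup satisfies $\|S(t)\| \leq e^{\lambda t}$, so
\begin{equation*}
\|U(t)\|_{\mathcal{H}} \leq e^{\lambda t}\|U_0\|_{\mathcal{H}} + \int_0^t e^{\lambda(t-s)}\|f(s)\|_{L^2(0,L)}\,ds \leq e^{\lambda T}\left(\|U_0\|_{\mathcal{H}} + \|f\|_{L^1(0,T,L^2(0,L))}\right),
\end{equation*}
which is exactly \eqref{J20}.

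For the $L^2(0,T,H^2(0,L))$ estimate \eqref{J21}, the plan is to revisit the multiplier computation from the proof of Proposition \ref{P1J}, now carrying the source term $f$ through the integrations by parts. Multiplying the first equation of \eqref{J19} by $q(x,t)u(x,t)$ and integrating, the identity \eqref{J17} acquires an extra term $\int_0^T\int_0^L q\, f\, u\,dx\,dt$ on the right-hand side. Choosing the Morawetz-type multiplier $q(x,t)=x$ as before gives control of $\frac{3}{2}\int_0^T\int_0^L u_x^2 + \frac{5}{2}\int_0^T\int_0^L u_{xx}^2$ by the $L^2$-norms of $u$ and $u(\cdot,t-h)$ (estimated via \eqref{I5} and \eqref{J20}), plus the boundary/data terms, plus $L\int_0^T\int_0^L |f||u|\,dx\,dt \leq L\,\|f\|_{L^1(0,T,L^2(0,L))}\,\|u\|_{C([0,T],L^2(0,L))}$. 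Using \eqref{J20} to bound $\|u\|_{C([0,T],L^2(0,L))}$ and combining with the Poincaré-type inequality to pass from the $H^2$-seminorm to the full $H^2$-norm yields \eqref{J21}, with the stated constant $C$ (the coefficient $\frac{3L}{2}$ rather than $\frac{L}{2}$ accounting for the extra contribution of the source term). Finally, continuity of the solution map from $\mathcal{H}$ (and $L^2(0,T,L^2(0,L))$) into $\mathcal{B}\times C([0,T],L^2((0,L)\times(0,1)))$ follows from linearity together with estimates \eqref{J20} and \eqref{J21}.

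The main obstacle is bookkeeping rather than conceptual: one must carefully track how the source term propagates through the multiplier identity \eqref{J17} and make sure the cross term $\int_0^T\int_0^L q f u$ is absorbed using Young's inequality in a way that only costs a factor involving $\|f\|_{L^1(0,T,L^2(0,L))}$ and the already-established bound \eqref{J20}, without creating a circular dependence on $\|u\|_{L^2(0,T,H^2)}$. A secondary point is that, unlike in Proposition \ref{P1J} where the energy identity was exact, here the energy is only known to satisfy the differential inequality $E_u'(t) \leq \lambda\|u(t)\|_{L^2}^2 + \|f(t)\|_{L^2}\|u(t)\|_{L^2}$ coming from the dissipativity estimate plus the source; integrating this Gronwall-type inequality is what produces the exponential factor $e^{\xi\|a\|_\infty T/h}$ in both \eqref{J20} and \eqref{J21}.
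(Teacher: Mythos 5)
Your proposal is correct and follows essentially the same route as the paper: Duhamel's formula with the semigroup bound $\|e^{t\mathcal{A}}\|_{\mathcal{L}(\mathcal{H})}\leq e^{\frac{\xi\|a\|_\infty}{2h}t}$ gives \eqref{J20}, and then the multiplier argument of Proposition \ref{P1J} with $q(x,t)=x$, where the extra term $\int_0^T\int_0^L x f u\,dx\,dt$ is absorbed via Young's inequality into $\frac{L}{2}\|u\|_{C([0,T],L^2)}^2+\frac{L}{2}\|f\|_{L^1(0,T,L^2)}^2$ and controlled by \eqref{J20}, gives \eqref{J21} with the enlarged constant $\frac{3L}{2}$. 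No gaps to report.
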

\begin{proof}Thanks to the fact that $\mathcal{A}$ is an infinitesimal generator of a $C_0$-semigroup $(e^{t\mathcal{A}})$ satisfying $\|e^{t\mathcal{A}}\|_{\mathcal{L}(\mathcal{H})} \leq e^{\frac{\xi \|a\|_\infty}{2h}t}$ and together with the fact that we can rewrite system \eqref{J19} as a first order system (see \eqref{J10}) with source term $(f(\cdot,t),0)$, we have that  \eqref{J19} is well-posed in $C([0,T],\mathcal{H})$. Additionally,  the proof of \eqref{J21} follows exactly the same steps of the proof of Proposition \ref{P1J}. However, we have to be careful to the fact that the right hand side terms are not homogeneous anymore, so, we need to note that
\begin{align*}
	\left| \int_{0}^{T}\int_{0}^{L} x f(x,t) u(x,t) dxdt\right| 
%	\leq & L\int_{0}^{T} \|f(\cdot,t)\|_{L^2(0,L)}\|u(\cdot,t)\|_{L^2(0,L)}dt\\
%	\leq& L \|u\|_{C([0,T],L^2(0,L))}\int_{0}^{T} \|f(\cdot,t)\|_{L^2(0,L)}dt\\
	\leq & \frac{L}{2}\|u\|_{C([0,T],L^2(0,L))}^2+\frac{L}{2}\|f\|_{L^1(0,T,L^2(0,L))}^2,
\end{align*}
and the result is achieved. 
\end{proof}

\subsection{Nonlinear system: Global results\label{ss2.3}} In this section we prove the global well-posedness result for the nonlinear system \eqref{J1}. 
% we follows \cite{Zuazua1} (see also \cite{Cerpa0} and \cite{Coron}).
The first step is to show that the nonlinear term $uu_x$ can be considered as a source term of the linear equation \eqref{J19}. Precisely, the result is the following.
\begin{proposition}\label{P3J} 
Let $u \in \mathcal{B}$. Then $uu_x \in L^1(0,T,L^2(0,L))$ and the map
\begin{equation*}
	u \in \mathcal{B} \mapsto uu_x \in L^1(0,T,L^2(0,L))
\end{equation*}
is continuous. In particular, there exists $K>0$ ($K=\sqrt{2}$) such that, for any $u,v \in \mathcal{B}$, we have
\begin{equation*}
	\int_{0}^{T} \|uu_x-vv_x\|_{L^2(0,L)}dt \leq KT^{\frac{1}{4}}(\|u\|_{\mathcal{B}}+\|v\|_{\mathcal{B}})\|u-v\|_{\mathcal{B}}
\end{equation*}
\end{proposition}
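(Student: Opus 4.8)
The plan is to reduce the whole statement to the displayed quadratic estimate. Once that inequality is available, taking $v=0$ gives $\int_0^T\|uu_x\|_{L^2(0,L)}\,dt\le\sqrt2\,T^{1/4}\|u\|_{\mathcal B}^2<\infty$, so $uu_x\in L^1(0,T,L^2(0,L))$; and the inequality itself is precisely a local Lipschitz bound, hence it delivers the continuity of $u\mapsto uu_x$ from $\mathcal B$ into $L^1(0,T,L^2(0,L))$. So I would focus entirely on the bilinear bound.

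At (a.e.) fixed time $t$ I would start from the algebraic identity $uu_x-vv_x=u(u-v)_x+(u-v)v_x$ and the triangle inequality in $L^2(0,L)$, giving $\|uu_x-vv_x\|_{L^2(0,L)}\le\|u\|_{L^\infty(0,L)}\|(u-v)_x\|_{L^2(0,L)}+\|u-v\|_{L^\infty(0,L)}\|v_x\|_{L^2(0,L)}$. The only analytic ingredient is the one–dimensional Sobolev inequality $\|w\|_{L^\infty(0,L)}\le\sqrt2\,\|w\|_{L^2(0,L)}^{1/2}\|w_x\|_{L^2(0,L)}^{1/2}$ for $w\in H^1_0(0,L)$, which follows from $w(x)^2=2\int_0^x w\,w_x$ and Cauchy--Schwarz — this is where the constant $\sqrt2$ in $K$ comes from. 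Applying it with $w=u$ and $w=u-v$ (recalling that $u(\cdot,t),v(\cdot,t)\in H^2_0(0,L)\subset H^1_0(0,L)$ for a.e. $t$ since $u,v\in\mathcal B$) and using $\|w_x\|_{L^2}\le\|w\|_{H^1}$ yields a pointwise-in-$t$ bound of the form $\|uu_x-vv_x\|_{L^2}\le\sqrt2\,\|u\|_{L^2}^{1/2}\|u_x\|_{L^2}^{1/2}\|(u-v)_x\|_{L^2}+\sqrt2\,\|u-v\|_{L^2}^{1/2}\|(u-v)_x\|_{L^2}^{1/2}\|v_x\|_{L^2}$. I stress that only the $H^1$ part of the $H^2_0$-regularity is used; keeping the remaining half-derivative in reserve is exactly what produces the small power $T^{1/4}$.

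Next I would integrate over $(0,T)$, bound the $\|u(t)\|_{L^2}^{1/2}$ (resp. $\|(u-v)(t)\|_{L^2}^{1/2}$) factors by their suprema and pull them out, and apply H\"older's inequality in $t$ to each remaining product of $L^2(0,L)$-norms of $x$-derivatives with the exponent triple $(4,2,4)$ (so the factors carrying exponent $\tfrac12$ become squares and the constant $1$ contributes $T^{1/4}$). This turns the two terms into $\sqrt2\,T^{1/4}\|u\|_{C([0,T],L^2)}^{1/2}\|u\|_{L^2(0,T,H^1)}^{1/2}\|u-v\|_{L^2(0,T,H^1)}$ and $\sqrt2\,T^{1/4}\|u-v\|_{C([0,T],L^2)}^{1/2}\|u-v\|_{L^2(0,T,H^1)}^{1/2}\|v\|_{L^2(0,T,H^1)}$, respectively. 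I would then absorb everything into $\|\cdot\|_{\mathcal B}$ using the trivial bounds $\|w\|_{C([0,T],L^2)}\le\|w\|_{\mathcal B}$ and $\|w\|_{L^2(0,T,H^1)}\le\|w\|_{L^2(0,T,H^2)}\le\|w\|_{\mathcal B}$, hence $\|w\|_{C([0,T],L^2)}^{1/2}\|w\|_{L^2(0,T,H^1)}^{1/2}\le\|w\|_{\mathcal B}$; summing the two contributions gives the claimed inequality with $K=\sqrt2$.

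There is no deep obstacle: this is a routine bilinear estimate of the kind standard for KdV/Kawahara-type equations. The only points requiring attention are the bookkeeping of the H\"older exponents — making the power of $T$ come out exactly $\tfrac14$, rather than the larger power one gets by naively exploiting the full $H^2$-smoothing or by Cauchy--Schwarz in $t$ — and tracking the constants to land on the explicit value $K=\sqrt2$. If the sharp exponent $T^{1/4}$ were not needed, a plain Cauchy--Schwarz in $t$ would instead produce a $T$-independent bound in the full $L^2(0,T,H^2)$-norms, which would still suffice for the usual contraction argument on short time intervals.
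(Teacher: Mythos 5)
Your proof is correct and takes essentially the same route as the paper: its sketch also rests on the one--dimensional inequality $\|w\|_{L^\infty(0,L)}^2\le 2\|w\|_{L^2(0,L)}\|w_x\|_{L^2(0,L)}$ for functions vanishing at the boundary, a bilinear splitting of $uu_x-vv_x$, and a H\"older-in-time step producing the $T^{\frac14}$ factor, with the time integration packaged (following Rosier--Zhang) as $\|z\|_{L^2(0,T,L^\infty(0,L))}\le\sqrt2\,T^{\frac14}\|z\|_{L^\infty(0,T,L^2(0,L))}^{\frac12}\|z\|_{L^2(0,T,H_0^2(0,L))}^{\frac12}$. Your direct $(4,2,4)$-H\"older bookkeeping is just a reorganization of that same computation and lands on the same constant $K=\sqrt2$.
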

\begin{proof} The proof is a variant of \cite[ Proposition 4.1]{Rosier1} so, we just give a sketch of the proof.  First, note that for $z \in H_{0}^{2}(0,L)$ we have
%\begin{align}\label{G3}
%z^2(x)-z^2(0)=& \int_{0}^{x}(z^2(s))'ds \nonumber \\
%=& \int_{0}^{x}2z(s)z'(x)ds \nonumber \\
%\leq & 2 \|z\|_{L^2(0,L)}\|z'\|_{L^2(0,L)}.
%\end{align}
%So, from \eqref{G3} we obtain
\begin{equation}\label{G4}
\|z\|_{L^\infty(0,L)}^2 \leq 2 \|z\|_{L^2(0,L)}\|z'\|_{L^2(0,L)}.
\end{equation}
From Hölder's inequality and \eqref{G4} we obtain
\begin{equation}\label{G5}
\begin{split}
	\|z\|_{L^2(0,T,L^\infty(0,L))} 
%	=& \left( \int_{0}^{T} \|z(\cdot,t)\|_{L^\infty(0,L)}^2 dt \right)^{\frac{1}{2}} \\
%	\leq &  \left( \int_{0}^{T} 2\|z(\cdot,t)\|_{L^2(0,L)}\|z_x(\cdot,t)\|_{L^2(0,L)}dt\right)^{\frac{1}{2}} \nonumber \\
%	\leq & \sqrt{2} \left( \int_{0}^{T} \|z(\cdot,t)\|_{L^2(0,L)}\|z_x(\cdot,t)\|_{L^2(0,L)}dt\right)^{\frac{1}{2}}\nonumber \\
%	\leq & \sqrt{2}\|z\|_{L^\infty(0,T,L^2(0,L))}^{\frac{1}{2}}\left(\int_{0}^{T}\|z_x(\cdot,t)\|_{L^2(0,L)}dt\right)^{\frac{1}{2}} \\
%	\leq & \sqrt{2}\|z\|_{L^\infty(0,T,L^2(0,L))}^{\frac{1}{2}} \left(\int_{0}^{T} 1 dt\right)^{\frac{1}{4}}\left(\int_{0}^{T}\|z_x(\cdot,t)\|_{L^2(0,L)}^{2}dt\right)^{\frac{1}{4}} \\
	\leq & \sqrt{2}T^{\frac{1}{4}}\|z\|_{L^\infty(0,T,L^2(0,L))}^{\frac{1}{2}}\|z\|_{L^\infty(0,T,H_{0}^2(0,L))}^{\frac{1}{2}}
	\end{split}
\end{equation}
Let $u,z \in \mathcal{B}$, then from \eqref{G5} it follows that
\begin{align*}
	\|uu_x -vv_x\|_{L^1(0,T,L^2(0,L))} 
%	& \|uu_x-vu_x+vu_x-vv_x\|_{L^1(0,T,L^2(0,L))}\\
%	\leq & \|uu_x-vu_x\|_{L^1(0,T,L^2(0,L))}+\|vu_x-vv_x\|_{L^1(0,T,L^2(0,L))}\\
%	=& \int_{0}^{T}\|(u-v)(\cdot,t)u_x(\cdot,t)\|_{L^2(0,L)}dt+\int_{0}^{T} \|v(\cdot,t)(u_x-v_x)(\cdot,t)\|_{L^2(0,L)}dt\\
%\leq & \int_{0}^{T}\|(u-v)(\cdot,t)\|_{L^\infty(0,L)}\|u_x(\cdot,t)\|_{L^2(0,L)}dt\\
%&+\int_{0}^{T} \|v(\cdot,t)\|_{L^\infty(0,L)}\|(u_x-v_x)(\cdot,t)\|_{L^2(0,L)}dt\\
%\leq& \left(\int_{0}^{T}\|(u-v)(\cdot,t)\|_{L^\infty(0,L)}^2 dt\right)^{\frac{1}{2}}\left( \int_{0}^{T} \|u_x(\cdot,t)\|_{L^2(0,L)}^2dt\right)^{\frac{1}{2}}\\
%&+ \left(\int_{0}^{T}\|v(\cdot,t)\|_{L^\infty(0,L)}^2 dt\right)^{\frac{1}{2}}\left( \int_{0}^{T} \|(u_x-v_x)(\cdot,t)\|_{L^2(0,L)}^2dt\right)^{\frac{1}{2}}\\
%\leq & \sqrt{2}T^{\frac{1}{4}}\|u-v\|_{L^\infty(0,T,L^2(0,L))}^{\frac{1}{2}}\|u-v\|_{L^2(0,T,H_{0}^{2}(0,L))}^{\frac{1}{2}}\|u\|_{L^2(0,T,H_{0}^{2}(0,L))}\\
%&+\sqrt{2}T^{\frac{1}{4}}\|v\|_{L^\infty(0,T,L^2(0,L))}^{\frac{1}{2}}\|v\|_{L^2(0,T,H_{0}^{2}(0,L))}^{\frac{1}{2}}\|u-v\|_{L^2(0,T,H_{0}^{2}(0,L))}\\
\leq & \frac{\sqrt{2}}{2}T^{\frac{1}{4}} \|u\|_{L^2(0,T,H_{0}^{2}(0,L))} \left(\|u-v\|_{L^\infty(0,T,L^2(0,L))}+\|u-v\|_{L^2(0,T,H_{0}^{2}(0,L))}\right)\\
&+\frac{\sqrt{2}}{2}T^{\frac{1}{4}} \|u-v\|_{L^2(0,T,H_{0}^{2}(0,L))} \left(\|v\|_{L^\infty(0,T,L^2(0,L))}+\|v\|_{L^2(0,T,H_{0}^{2}(0,L))}\right)\\
\leq & \sqrt{2} T^{\frac{1}{4}} \left( \|u\|_{\mathcal{B}}+\|v\|_{\mathcal{B}}\right)\|u-v\|_{\mathcal{B}},
\end{align*}
and the proof is complete.
\end{proof}

We are now in position to prove the global existence of solutions of \eqref{J1}.
\begin{proposition}\label{P4J}
Let $L>0$ and assume that $a(x) \in L^\infty(\Omega)$ and \eqref{2.7} holds. Then, for every $(u_0,z_0(\cdot,-h(\cdot))) \in \mathcal{H}$, there exists a unique $u \in \mathcal{B}$ solution of system \eqref{J1}. Moreover, there exists $C>0$ such that
\begin{equation}\label{J22}
\|u_x\|_{L^2(0,T,L^2(0,L))}^{2}+\|u_{xx}\|_{L^2(0,T,L^2(0,L))}^2 \leq C(\|(u_0,z_0(\cdot,-h(\cdot)))\|_{\mathcal{H}}^{2}+\|(u_0,z_0(\cdot,-h(\cdot)))\|_{\mathcal{H}}^{4}).
\end{equation}
\end{proposition}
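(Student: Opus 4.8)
The plan is to prove global existence of solutions of the nonlinear system \eqref{J1} by a fixed point argument, combining the linear estimates of Proposition \ref{P2J} with the nonlinear estimate of Proposition \ref{P3J}, and then to close the argument globally using the dissipative energy identity of Proposition \ref{P5JL}.

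\textbf{Local existence via contraction.} First I would fix $(u_0,z_0(\cdot,-h(\cdot))) \in \mathcal{H}$ and, for $T>0$ and $R>0$ to be chosen, introduce the closed ball $B_R = \{v \in \mathcal{B} : \|v\|_{\mathcal{B}} \leq R\}$ and the map $\Gamma$ which sends $v \in B_R$ to the solution $u$ of the linear system \eqref{J19} with source term $f = -vv_x$ (and, accordingly, the associated transport component $z$). By Proposition \ref{P3J} we have $vv_x \in L^1(0,T,L^2(0,L))$ whenever $v \in \mathcal{B}$, so $\Gamma$ is well-defined by Proposition \ref{P2J}. Combining the estimates \eqref{J20}--\eqref{J21} with $\|vv_x\|_{L^1(0,T,L^2(0,L))} \leq \tfrac{\sqrt2}{2}T^{1/4}\|v\|_{\mathcal{B}}^2$ (a special case of Proposition \ref{P3J} with $v=0$ in the difference), one gets $\|\Gamma(v)\|_{\mathcal{B}} \leq C_1(T)\big(\|(u_0,z_0(\cdot,-h(\cdot)))\|_{\mathcal{H}} + T^{1/4}R^2\big)$, where $C_1(T)$ stays bounded on bounded time intervals. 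Choosing $R = 2C_1(T)\|(u_0,z_0(\cdot,-h(\cdot)))\|_{\mathcal{H}}$ and then $T$ small enough that $C_1(T)T^{1/4}R \leq \tfrac12$ makes $\Gamma$ map $B_R$ into itself. For the contraction, I would apply the linear estimate to the difference $\Gamma(v_1)-\Gamma(v_2)$, which solves \eqref{J19} with zero initial data and source $-(v_1(v_1)_x - v_2(v_2)_x)$, and use the Lipschitz bound of Proposition \ref{P3J} to obtain $\|\Gamma(v_1)-\Gamma(v_2)\|_{\mathcal{B}} \leq C_1(T)KT^{1/4}(\|v_1\|_{\mathcal{B}}+\|v_2\|_{\mathcal{B}})\|v_1-v_2\|_{\mathcal{B}} \leq 2C_1(T)KT^{1/4}R\,\|v_1-v_2\|_{\mathcal{B}}$, which is a strict contraction after possibly shrinking $T$. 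The Banach fixed point theorem then yields a unique local solution $u \in \mathcal{B}$, and uniqueness in all of $\mathcal{B}$ (not just in $B_R$) follows by the standard argument of comparing two solutions on a short interval using the same Lipschitz estimate.

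\textbf{Globalization and the estimate \eqref{J22}.} The key point is that the local existence time $T$ depends only on $\|(u_0,z_0(\cdot,-h(\cdot)))\|_{\mathcal{H}}$, so to extend the solution to arbitrary $T>0$ it suffices to bound the $\mathcal{H}$-norm of $(u(\cdot,t),z(\cdot,\cdot,t))$ uniformly on bounded time intervals. This is exactly what Proposition \ref{P5JL} provides: the energy $E_u$ defined in \eqref{J6} is non-increasing along solutions, hence $\|(u(\cdot,t),z(\cdot,\cdot,t))\|_{\mathcal{H}}$ is controlled by $\|(u_0,z_0(\cdot,-h(\cdot)))\|_{\mathcal{H}}$ for all $t$ (since $E_u$ is equivalent to the square of the $\mathcal{H}$-norm up to the constant $\|a\|_\infty$ appearing in the inner product). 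Iterating the local construction on successive intervals of fixed length then yields a global solution $u \in \mathcal{B}$ on $[0,T]$ for every $T>0$. Finally, to obtain \eqref{J22} I would run the multiplier computation of Proposition \ref{P1J} (with $q(x,t)=x$) on the nonlinear equation, treating $-uu_x$ as the source: the extra term is $\int_0^T\!\int_0^L x\,u u_x\, u\, dx dt = -\tfrac13\int_0^T\!\int_0^L u^3 dx dt$ after integration by parts, which by \eqref{G4}, Hölder, and the energy bound is estimated by $C\,T^{1/4}\|u\|_{\mathcal{B}}^{2}\,\|(u_0,z_0(\cdot,-h(\cdot)))\|_{\mathcal{H}} \leq C(\|(u_0,z_0(\cdot,-h(\cdot)))\|_{\mathcal{H}}^{2}+\|(u_0,z_0(\cdot,-h(\cdot)))\|_{\mathcal{H}}^{4})$ once $\|u\|_{\mathcal{B}}$ itself is bounded in terms of the data via \eqref{J20}--\eqref{J21} and the nonlinear estimate. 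Collecting the $\tfrac32\int u_x^2$ and $\tfrac52\int u_{xx}^2$ terms on the left gives \eqref{J22}.

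\textbf{Main obstacle.} The delicate point is the interplay between the contraction radius $R$ and the a priori $\mathcal{H}$-bound: one must be careful that the local time $T$ from the fixed point step depends only on $\|(u_0,z_0(\cdot,-h(\cdot)))\|_{\mathcal{H}}$ and not on higher-order norms, so that the non-increasing energy genuinely allows reiteration without the time step shrinking to zero. Everything else — the linear estimates and the nonlinear product estimate — is already in hand from Propositions \ref{P2J} and \ref{P3J}, and the cubic term in the multiplier identity is handled exactly as the $uu_x$ term was handled in Proposition \ref{P3J}, so no genuinely new difficulty arises there.
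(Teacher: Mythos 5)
Your overall strategy is the same as the paper's: local existence by a contraction built from Propositions \ref{P2J} and \ref{P3J}, globalization by an a priori bound on the $\mathcal{H}$-norm, and then the multiplier $q(x,t)=x$ applied to the nonlinear equation, with the cubic term $\int_0^T\int_0^L u^3\,dx\,dt$ handled through $H^1(0,L)\hookrightarrow C([0,L])$, exactly as in \eqref{uuu}. The fixed-point step and the derivation of \eqref{J22} are fine (the paper absorbs the cubic term by Young's inequality with a small $\varepsilon$ rather than invoking a bound on $\|u\|_{\mathcal{B}}$, but both work, and the constant is allowed to depend on $T$).

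There is, however, a genuine flaw in your globalization step. You claim that since $E_u$ is non-increasing, the $\mathcal{H}$-norm of $(u(\cdot,t),z(\cdot,\cdot,t))$ is controlled by the data because ``$E_u$ is equivalent to the square of the $\mathcal{H}$-norm up to the constant $\|a\|_\infty$''. This equivalence is false in the setting of Proposition \ref{P4J}: the weight in the delay part of $E_u$ is $\xi a(x)$, while the $\mathcal{H}$-inner product carries the weight $\xi\|a\|_\infty$, and $a$ is only assumed nonnegative and bounded (in the relevant case it is localized, vanishing outside $\omega$). Hence $E_u(t)$ does not dominate $\|z(\cdot,\cdot,t)\|_{L^2((0,L)\times(0,1))}^2$, and monotonicity of $E_u$ alone does not give the uniform $\mathcal{H}$-bound you need to restart the contraction. (A smaller caveat: Proposition \ref{P5JL} is stated for the linear system; the extension to \eqref{J1} is legitimate because $\int_0^L u^2u_x\,dx=0$, but it should be said.) The gap is repairable in either of two ways: (i) as the paper does, use the Gronwall/semigroup-type bound \eqref{J23}, $\|(u(\cdot,t),u(\cdot,t-h\cdot))\|_{\mathcal{H}}^2\le e^{\frac{\xi\|a\|_\infty}{2}t}\|(u_0,z_0(\cdot,-h\cdot))\|_{\mathcal{H}}^2$, which grows exponentially but is finite on bounded intervals and suffices to iterate the local construction; or (ii) keep the energy argument for the first component, $\tfrac12\|u(\cdot,t)\|_{L^2}^2\le E_u(t)\le E_u(0)$, and bound the history component directly by writing $\int_0^L\int_0^1 u^2(x,t-\rho h)\,d\rho\,dx=\tfrac1h\int_{t-h}^{t}\|u(\cdot,s)\|_{L^2}^2\,ds$, which is controlled by $\sup_{s}\|u(\cdot,s)\|_{L^2}^2$ and the initial history $z_0$. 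Without one of these substitutes for the false equivalence, the ``time step does not shrink'' argument you identify as the main obstacle is not actually closed.
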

\begin{proof} To prove this result we can follow a standard argument in the literature (see e.g.  \cite{Zuazua1,Pazoto}).  Thus, our goal is to obtain the global existence of solutions proving the local existence and using the following \textit{a priori} estimate
\begin{equation}\label{J23}
\|(u(\cdot,t),u(\cdot,t-h(\cdot)))\|_{\mathcal{H}}^{2} \leq e^{\frac{\xi\|a\|_\infty}{2}t} \|(u_0,z_0(\cdot,-h(\cdot)))\|_{\mathcal{H}}^{2}.
\end{equation}
Then, we are concentrated to prove the local existence and uniqueness of solutions to \eqref{J1}. Let $(u_0,z_0(\cdot,-h(\cdot))) \in \mathcal{H}$ and $u \in \mathcal{B}$, we consider the map $\Phi: \mathcal{B} \rightarrow \mathcal{B}$ defined by $\Phi(u)=\tilde{u}$ where $\tilde{u}$ is solution of 
\begin{equation*}
	\begin{cases}
		\tilde{u}_{t}(x,t)+\tilde{u}_{x}(x,t)+\tilde{u}_{xxx}(x,t)-\tilde{u}_{xxxxx}(x,t)\\ \quad  \quad  \quad  \quad  \quad  \quad  \quad  +a\left(x\right)(\mu_1 \tilde{u}(x,t)+\mu_2\tilde{u}(x,t-h))=-u(x,t)u_x(x,t) & x \in (0,L),~ t>0,\\
		\tilde{u}\left(  0,t\right)  =\tilde{u}\left(  L,t\right)  =\tilde{u}_{x}\left(  0,t\right)
		=\tilde{u}_{x}\left(  L,t\right)  =\tilde{u}_{xx}\left(  L,t\right)  =0 & t>0,\\
		\tilde{u}\left(  x,0\right)  =u_{0}\left(  x\right)  & x \in (0,L),\\
		\tilde{u}(x,t)=z_0(x,t) & x \in (0,L),~t \in (-h,0).
	\end{cases}
\end{equation*}
Notice that $u \in \mathcal{B}$ is solution of \eqref{J1} if and only if $u$ is a fixed point of the map $\Phi$.  Thus, let us prove that the map $\Phi$ is a contraction.  

In fact, thanks to \eqref{J20}, \eqref{J21} and Proposition \ref{P3J}, we get
\begin{align*}
	\|\Phi u\|_{\mathcal{B}} 
	%=	& \|\tilde{u}\|_{\mathcal{B}}\\
	\leq & \sqrt{C}(1+\sqrt{T}+e^{\frac{\xi\|a\|_{\infty}}{2h}T})\left(\|(u_0,z_0(\cdot,-h(\cdot)))\|_{\mathcal{H}}+\|uu_x\|_{L^1(0,T,L^2(0,L))} \right)\\
	\leq &  \sqrt{C}(1+\sqrt{T}+e^{\frac{\xi\|a\|_{\infty}}{2h}T})\left(\|(u_0,z_0(\cdot,-h(\cdot)))\|_{\mathcal{H}}+KT^{\frac{1}{4}}\|u\|_{\mathcal{B}}^2\right)\\
	\leq & \sqrt{C}(1+\sqrt{T}+e^{\frac{\xi\|a\|_{\infty}}{2h}T})\|(u_0,z_0(\cdot,-h(\cdot)))\|_{\mathcal{H}}+ \sqrt{C}K(2T^{\frac{1}{4}}+T^{\frac{1}{4}}e^{\frac{\xi\|a\|_{\infty}}{2h}T})\|u\|_{\mathcal{B}}^{2},
\end{align*}
if $T<1$. Moreover, for the same reasons, we have
\begin{align*}
	\|\Phi(u_1)-\Phi(u_2)\|_{\mathcal{B}} 
	%\leq & \sqrt{C}(1+\sqrt{T}+e^{\frac{\xi\|a\|_{\infty}}{2h}T})\int_{0}^{T}\|-u_1u_{1,x}+u_2u_{2,x}\|_{L^2(0,L)}dt\\
	\leq&  \sqrt{C}K(1+\sqrt{T}+e^{\frac{\xi\|a\|_{\infty}}{2h}T})T^{\frac{1}{4}}\left(\|u_1\|_{\mathcal{B}}+\|u_2\|_{\mathcal{B}} \right)\|u_1-u_2\|_{\mathcal{B}}.
\end{align*}
Now, consider $\Phi$ restricted to the closed ball $\{u \in \mathcal{B}: \|u\|_{\mathcal{B}}\leq R\}$ with $R>0$ to be chosen later. Then,
\begin{equation*}
\|\Phi(u)\|_{\mathcal{B}} \leq 	\sqrt{C}(1+\sqrt{T}+e^{\frac{\xi\|a\|_{\infty}}{2h}T})\|(u_0,z_0(\cdot,-h(\cdot)))\|_{\mathcal{H}}+ \sqrt{C}K(2T^{\frac{1}{4}}+T^{\frac{1}{4}}e^{\frac{\xi\|a\|_{\infty}}{2h}T})R^{2}
\end{equation*}
and
\begin{equation*}
	\|\Phi(u_1)-\Phi(u_2)\|_{\mathcal{B}} \leq 2\sqrt{C}K(1+\sqrt{T}+e^{\frac{\xi\|a\|_{\infty}}{2h}T})T^{\frac{1}{4}}R\|u_1-u_2\|_{\mathcal{B}}.
\end{equation*}
So, pick $R=4\sqrt{C}\|(u_0,z_0(\cdot,-h(\cdot)))\|_{\mathcal{H}}$ and $T>0$ satisfying
\begin{equation*}
	\begin{cases}
		\sqrt{T}+8\sqrt{C}KT^{\frac{1}{4}}+4\sqrt{C}KT^{\frac{1}{4}}e^{\frac{\xi\|a\|_\infty}{2h}T}<1,\\ 2T^{\frac{1}{4}}+T^{\frac{1}{4}}e^{\frac{\xi\|a\|_\infty}{2h}T}<\frac{1}{2\sqrt{C}KR}, \\
		T<1,~e^{\frac{\xi\|a\|_\infty}{2h}T}<2,
	\end{cases}
\end{equation*}
then $\|\Phi(u)\|_{\mathcal{B}} <R$ and $\|\Phi(u_1)-\Phi(u_2)\|_{\mathcal{B}} \leq C_1\|u_1-u_2\|_{\mathcal{B}}$, with $C_1<1$, showing that $\Phi$ is a contraction. Consequently, we can apply the Banach fixed point theorem and the map $\Phi$ has a unique fixed point.

In this last part, let us show \eqref{J22}.  
%Note that using the same argument as done in Proposition \ref{P5JL} we obtain
%\begin{equation}\label{J25}
%	E_u'(t)\leq -\frac{1}{2}u_{xx}^2(0)+\left(-\mu_1+\frac{\xi}{2h}+\frac{\mu_2}{2}\right)\int_{0}^{L}a(x)u^2(x)dx+\left(\frac{\mu_2}{2}-\frac{\xi}{2h}\right)\int_{0}^{L}a(x)u^2(x,t-h)dx
%\end{equation}
%and consequently
%\begin{equation}\label{J24}
%	E_u'(t)\leq-C_0\left[u_{xx}^2(0)+\int_{0}^{L}a(x)u^2(x)dx+\int_{0}^{L}a(x)u^2(x,t-h)dx\right]
%\end{equation}
%where $C_0>0$ is given by
%\begin{equation*}
%	C_0=\min\left\{\frac{1}{2},\mu_1-\frac{\xi}{2h}-\frac{\mu_2}{2},-\frac{\mu_2}{2}+\frac{\xi}{2h}\right\}        
%\end{equation*}
%for all solutions os system \eqref{J1}.
%To prove \eqref{J22}, we first observe that
%\begin{equation}\label{I6}
%-\int_{0}^{L}uu_xqudxdt=\frac{1}{3}\int_{0}^{L}q_xu^3dxdt.
%\end{equation}
Following the same steps of the proof of Proposition \ref{P1J}, that is,  multiplying \eqref{J1} $xu$, integrating by parts and using \eqref{J23}, we obtain
\begin{equation*}
	\frac{3}{2}\int_{0}^{T}\int_{0}^{L} u_x^2dx+\frac{5}{2}\int_{0}^{T}\int_{0}^{L} u_{xx}^{2}dx  \leq C(1+T)\|(u_0,z_0(\cdot,-h(\cdot)))\|_{\mathcal{H}}^{2}+\frac{1}{3}\int_{0}^{T}\int_{0}^{L} u^3(x,t)dxdt.
\end{equation*}
As $H^1(0,L) \hookrightarrow C([0,L])$ we obtain, by using the Cauchy-Schwarz inequality and \eqref{J23}, that
\begin{equation}\label{uuu}
\begin{split}
	\int_{0}^{T} \int_{0}^{L}|u(x,t)|^3dxdt \leq& \int_{0}^{T} \|u\|_{L^\infty(0,L)}\int_{0}^{L}u^2(x,t)dxdt\\
	\leq & \sqrt{L} \int_{0}^{T}\|u(\cdot,t)\|_{H^1(0,L)}\int_{0}^{L}u^2(x,t)dxdt\\
	%\leq & \sqrt{L}\|u\|_{L^\infty(0,T,L^2(0,L))}^{2}\int_{0}^{T}\|u(\cdot,t)\|_{H^1(0,L)}dt\\
	\leq & \sqrt{LT}\|u\|_{L^\infty(0,T,L^2(0,L))}^{2}\|u\|_{L^2(0,T,H^1(0,L))}\\
	\leq & \sqrt{LT} \|(u_0,z_0(\cdot,-h(\cdot)))\|_{\mathcal{H}}^{2}\|u\|_{L^2(0,T,H^1(0,L))}.
	\end{split}
\end{equation}
Consequently, we obtain
\begin{align*}
	\frac{3}{2}\int_{0}^{T}\int_{0}^{L} u_x^2dx+\frac{5}{2}\int_{0}^{T}\int_{0}^{L} u_{xx}^{2}dx  \leq & C(1+T)\|(u_0,z_0(\cdot,-h(\cdot)))\|_{\mathcal{H}}^{2}+\frac{\sqrt{LT}}{4\varepsilon}\|(u_0,z_0(\cdot,-h(\cdot)))\|_{\mathcal{H}}^{4}\\
	&+\varepsilon \sqrt{LT}\|u\|_{L^2(0,T,H^1(0,L))}^{2}.
\end{align*}
For $\varepsilon>0$ small enough we obtain
\begin{equation*}
	\frac{1}{2}\int_{0}^{T}\int_{0}^{L} u_x^2dx+\frac{5}{2}\int_{0}^{T}\int_{0}^{L} u_{xx}^{2}dx  \leq C(1+T)\|(u_0,z_0(\cdot,-h(\cdot)))\|_{\mathcal{H}}^{2}+\frac{\sqrt{LT}}{4\varepsilon}\|(u_0,z_0(\cdot,-h(\cdot)))\|_{\mathcal{H}}^{4},
\end{equation*}
which completes the proof.
\end{proof}

\section{Study of the damping--delayed system}\label{Sec3}
In this section we are interested in studying the time--delayed system \eqref{fd1}.  Note that if we choose  $a(x):= \mu_1 a(x)$ and $b(x):= \mu_2 a(x)$, where $\mu_1$ and $\mu_2$ are real constants, in the system \eqref{fd1} we recovered the system \eqref{J1}.  Thus, in this section, we deal with the exponential stability of the solutions associated to the system  \eqref{J1} considering the case when $\supp b \not\subset \supp a$. In this case, the derivative of the energy $E$ defined by
\begin{equation}\label{J61}
	E_u(t)=\frac{1}{2}\int_0^Lu^2(x,t)dx+\frac{h}{2}\int_0^L\int_0^1 b(x)u^2(x,t-\rho h)\rho dx,
\end{equation}
satisfies
\begin{align*}
	\frac{d}{dt}E_u(t) =& -u_{xx}^{2}(0)-\int_{0}^{L}a(x)u^2(x,t)dx-\int_{0}^{L}b(x)u(x,t)u(x,t-h)dx\\
	&+\frac{1}{2}\int_{0}^{L}b(x)u^2(x,t)dx-\frac{1}{2}\int_{0}^{L}b(x)u^2(x,t-h)dx\\
	\leq& -u_{xx}^{2}(0)-\int_{0}^{L}a(x)u^2(x,t)dx+\frac{1}{2}\int_{0}^{L}b(x)u^2(x,t)dx+\frac{1}{2}\int_{0}^{L}b(x)u^2(x,t-h)dx\\
	&+\frac{1}{2}\int_{0}^{L}b(x)u^2(x,t)dx-\frac{1}{2}\int_{0}^{L}b(x)u^2(x,t-h)dx\\
	\leq & \int_{0}^{L}b(x)u^2(x,t)dx. 
\end{align*}
The previous inequality means that the energy is not decreasing in general, since the term $b(x)\geq 0$ on $(0,L)$. So, inspired by \cite{Valein}, we consider the following perturbation system
\begin{equation}
	\begin{cases}\label{J42}
		u_{t}(x,t)+u_{x}(x,t)+u_{xxx}(x,t)-u_{xxxxx}(x,t)+u(x,t)u_{x}(x,t)\\ \quad  \quad  \quad  \quad  \quad  \quad  +b(x)u(x,t-h)+a\left(x\right)u(x,t)+\xi b(x)u(x,t)=0 & x \in (0,L),~ t>0,\\
		u\left(  0,t\right)  =u\left(  L,t\right)  =u_{x}\left(  0,t\right)
		=u_{x}\left(  L,t\right)  =u_{xx}\left(  L,t\right)  =0 & t>0,\\
		u\left(  x,0\right)  =u_{0}\left(  x\right)  & x \in (0,L),\\
		u(x,t)=z_0(x,t) & x \in (0,L),~t \in (-h,0),
	\end{cases}
\end{equation}
which is ``close'' to \eqref{fd1} but with a decreasing energy, with $\xi$ a positive constant. So, considering the energy defined by
\begin{equation}\label{J43}
	E_u(t)=\frac{1}{2}\int_0^Lu^2(x,t)dx+\frac{\xi h}{2}\int_0^L\int_0^1 b(x)u^2(x,t-\rho h)\rho dx,
\end{equation}
we get, for $\xi>1$, that the derivative of the energy $E_u(t)$, for classical solutions of \eqref{J42}, satisfies
\begin{align*}
	\frac{d}{dt}E_u(t)
%	=&-u_{xx}^{2}(0)-\int_{0}^{L}a(x)u^2(x,t)dx-\int_{0}^{L}b(x)u(x,t)u(x,t-h)dx-\int_{0}^{L}\xi v(x)u^2(x,t)dx\\
%	&+\frac{1}{2}\int_{0}^{L}\xi b(x)u^2(x,t)dx-\frac{1}{2}\int_{0}^{L}\xi b(x)u^2(x,t-h)dx\\
	\leq & -u_{xx}^{2}(0)-\int_{0}^{L}a(x)u^2(x,t)dx+\frac{1}{2}\int_{0}^{L}b(x)u^2(x,t)dx+\frac{1}{2}\int_{0}^{L}b(x)u^2(x,t-h)dx\\
	&-\int_{0}^{L}\xi b(x)u^2(x,t)dx+\frac{1}{2}\int_{0}^{L}\xi b(x)u^2(x,t)dx-\frac{1}{2}\int_{0}^{L}\xi b(x)u^2(x,t-h)dx\\
	\leq & -u_{xx}^{2}(0)-\int_{0}^{L}a(x)u^2(x,t)dx+\frac{1}{2}\int_{0}^{L}(b(x)-\xi b(x))u^2(x,t)dx\\
	&+\frac{1}{2}\int_{0}^{L}(b(x)-\xi b(x))u^2(x,t-h)dx \leq 0.
\end{align*}

\subsection{Local stability: A perturbation argument}
In this subsection, before to present the main result of this section, we will study the asymptotic stability of the linear system associated to \eqref{J42}, namely, 
\begin{equation}
	\begin{cases}\label{J8}
		u_{t}(x,t)+u_{x}(x,t)+u_{xxx}(x,t)-u_{xxxxx}(x,t)+a\left(x\right)u(x,t)\\ \quad  \quad  \quad  \quad  \quad  \quad  \quad  \quad  \quad  \quad \quad +b(x)u(x,t-h)=0 & x \in (0,L),~ t>0,\\
		u\left(  0,t\right)  =u\left(  L,t\right)  =u_{x}\left(  0,t\right)
		=u_{x}\left(  L,t\right)  =u_{xx}\left(  L,t\right)  =0 & t>0,\\
		u\left(  x,0\right)  =u_{0}\left(  x\right)  & x \in (0,L),\\
		u(x,t)=z_0(x,t) & x \in (0,L),~t \in (-h,0),
	\end{cases}
\end{equation}
 in the case $\supp b \not\subset \supp a$ and $\xi>1$.  Note that this system can write as the first order system
 \begin{equation}\label{J10a}
	\begin{cases}
		\displaystyle \frac{\partial U}{\partial t}(t)=\mathcal{A}U(t),\\
		U(0)={}  (u_{0}(x), z_0(x,-\rho h )).
	\end{cases}
\end{equation}
 where the corresponding operator $\mathcal{A}$ is defined by
\begin{equation*}
	\mathcal{A}=	\mathcal{A}_0+	B
\end{equation*}
with domain $\mathscr{D}(\mathcal{A})=\mathscr{D}(\mathcal{A}_0)$ and the bounded operator $B$ is defined by
\begin{equation*}
	B(u,z)=(\xi b(x)u,0) \hbox{ for all } (u,z) \in \mathcal{H}.
\end{equation*}
Here,  $\mathcal{A}_0$ is defined by \eqref{A0}.  The first result ensures that the system \eqref{J8} is well-posed.  It is consequence of the analysis made for an auxiliary system in Appendix \ref{ApA}.
\begin{proposition}\label{P8Ja}
Assume that $a(x)$ and $b(x)$ are nonnegative function in $L^\infty(0,L)$, $b(x)\geq b_0>0$ in $\omega$, $L<\pi \sqrt{3}$ and $\xi>1$. Then, for every $(u_0,z_0(\cdot,-h(\cdot))) \in \mathcal{H}$, there exists a unique mild solution $U \in C([0,\infty),\mathcal{H})$ for system \eqref{J8}. Additionally, for every $U_0 \in \mathscr{D}(\mathcal{A})$, the solution is classical and satisfies $$U \in C([0,\infty),\mathscr{D}(\mathcal{A})) \cap C^1([0,\infty),\mathcal{H}).$$
\end{proposition}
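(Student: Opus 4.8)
The plan is to obtain Proposition \ref{P8Ja} as a perturbation of the corresponding well-posedness statement for the operator $\mathcal{A}_0$ established (for the auxiliary damping--delay system) in Appendix \ref{ApA}. Concretely, the decomposition $\mathcal{A}=\mathcal{A}_0+B$ with $\mathscr{D}(\mathcal{A})=\mathscr{D}(\mathcal{A}_0)$ is already in place, so the work reduces to two facts: first, that $\mathcal{A}_0$ generates a $C_0$-semigroup on $\mathcal{H}$ (this is exactly what is proved in Appendix \ref{ApA} under the hypotheses $a,b\ge0$ in $L^\infty(0,L)$, $b\ge b_0>0$ on $\omega$, $L<\pi\sqrt3$, $\xi>1$); and second, that $B$ is a bounded linear operator on $\mathcal{H}$. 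The latter is immediate: since $b\in L^\infty(0,L)$ and $\xi>0$, for every $(u,z)\in\mathcal{H}$ one has $\|B(u,z)\|_{\mathcal{H}}=\|\xi b u\|_{L^2(0,L)}\le \xi\|b\|_\infty\|u\|_{L^2(0,L)}\le \xi\|b\|_\infty\|(u,z)\|_{\mathcal{H}}$, so $B\in\mathcal{L}(\mathcal{H})$.

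The key step is then to invoke the bounded perturbation theorem for $C_0$-semigroup generators: if $\mathcal{A}_0$ generates a $C_0$-semigroup on $\mathcal{H}$ and $B\in\mathcal{L}(\mathcal{H})$, then $\mathcal{A}_0+B$ with the same domain $\mathscr{D}(\mathcal{A}_0)$ also generates a $C_0$-semigroup on $\mathcal{H}$ (see e.g. \cite[Theorem 1.1, Chapter 3]{Pazy}). Applying this to $\mathcal{A}=\mathcal{A}_0+B$ gives that $\mathcal{A}$ generates a $C_0$-semigroup $(e^{t\mathcal{A}})_{t\ge0}$ on $\mathcal{H}$. From here the well-posedness statement is the standard semigroup dictionary: for $U_0\in\mathcal{H}$ the function $U(t)=e^{t\mathcal{A}}U_0$ is the unique mild solution and belongs to $C([0,\infty),\mathcal{H})$, while for $U_0\in\mathscr{D}(\mathcal{A})=\mathscr{D}(\mathcal{A}_0)$ the solution is classical and satisfies $U\in C([0,\infty),\mathscr{D}(\mathcal{A}))\cap C^1([0,\infty),\mathcal{H})$, with $\frac{dU}{dt}=\mathcal{A}U$ holding pointwise (see e.g. \cite[Theorem 1.3, Chapter 4]{Pazy}). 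Rewriting \eqref{J8} as the abstract Cauchy problem \eqref{J10a} translates this back into the stated solvability of \eqref{J8}.

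I do not expect any serious obstacle here: all the analytic content sits in Appendix \ref{ApA} (the dissipativity estimates for $\mathcal{A}_0$ and $\mathcal{A}_0^*$, together with closedness and density of the domain, which is where the length restriction $L<\pi\sqrt3$ and the sign condition $\xi>1$ enter). The only point requiring a line of care is that one should cite the correct form of the perturbation result — the bounded perturbation theorem applies verbatim to generators of arbitrary $C_0$-semigroups, not only contraction semigroups, so no rescaling or extra estimate on $e^{t\mathcal{A}_0}$ is needed. Thus the proof is essentially a two-line reduction: verify $B\in\mathcal{L}(\mathcal{H})$, quote the bounded perturbation theorem against the Appendix \ref{ApA} result for $\mathcal{A}_0$, and then read off mild/classical well-posedness from standard semigroup theory.
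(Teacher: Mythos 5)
Your proposal is correct, and it reaches the conclusion by a slightly different mechanism than the paper. The paper does not invoke the bounded perturbation theorem: it re-runs the dissipativity computation for the sum, i.e.\ it uses the estimate of Theorem \ref{T4.2J} together with the (normalizing) assumption $\|b\|_\infty\le 1$ to show directly that $\left((\mathcal{A}_0+B)U,U\right)_{\mathcal{H}}\le \frac{3\xi+1}{2}\|U\|_{\mathcal{H}}^2$ and the analogous bound for $(\mathcal{A}_0+B)^*$, and then applies the same Lumer--Phillips-type corollary from \cite{Pazy} used in Proposition \ref{Agi} to $\mathcal{A}-\lambda I$ with $\lambda=\frac{3\xi+1}{2}$. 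Your route -- verify $B\in\mathcal{L}(\mathcal{H})$ (your estimate $\|B(u,z)\|_{\mathcal{H}}\le\xi\|b\|_\infty\|(u,z)\|_{\mathcal{H}}$ is valid in the weighted inner product of Appendix \ref{ApA}, since $B$ only acts on the first component) and quote the bounded perturbation theorem against the generation result for $\mathcal{A}_0$ -- is equally rigorous, avoids computing or estimating $\mathcal{A}^*$ and does not need the normalization $\|b\|_\infty\le1$. What the paper's computation buys, and what you should keep in mind, is the explicit quasi-contraction bound $\|e^{t\mathcal{A}}\|_{\mathcal{L}(\mathcal{H})}\le e^{\frac{3\xi+1}{2}t}$, which is used quantitatively later (in the proofs of Proposition \ref{P8J} and Theorem \ref{T1.2J}); your argument also produces such a bound if you track constants, since $e^{t\mathcal{A}_0}$ is quasi-contractive with $M=1$ and $\omega=\frac{(1+\xi)}{2}\|b\|_\infty$, so the perturbation theorem yields $\|e^{t\mathcal{A}}\|_{\mathcal{L}(\mathcal{H})}\le e^{\left(\frac{1+3\xi}{2}\right)\|b\|_\infty t}$, but you would need to state this explicitly for the later stability estimates to go through.
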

\begin{proof}
Assume that $\|b\|_{\infty} \leq 1$. From Theorem \ref{T4.2J} we have that 
\begin{align*}
\left((\mathcal{A}_0+B)U,U\right)_{\mathcal{H}}
% \leq&  \frac{(1+\xi)}{2}\|b\|_\infty\|U\|_{\mathcal{H}}^{2}+\xi\|b\|_\infty \|U\|_{\mathcal{H}}^{2}\\
\leq& \frac{(3\xi+1)}{2}\|U\|_{\mathcal{H}}^{2}.
\end{align*}
for all $U \in \mathscr{D}(\mathcal{A})$.  In the same way, we obtain 
\begin{equation*}
\left(\left(\mathcal{A}_0+B\right)^*U,U\right)_{\mathcal{H}} \leq \frac{(3\xi+1)}{2}\|U\|_{\mathcal{H}}^{2}
\end{equation*}
for all $U \in \mathscr{D}(\mathcal{A}^{*})$.

 Finally, since for $\lambda=\frac{(3\xi+1)}{2}$, $\mathcal{A}-\lambda I$ is a densely defined closed linear operator, and both $\mathcal{A}-\lambda I$ and $(\mathcal{A}-\lambda I)^*$ are dissipative, then $\mathcal{A}$ is the infinitesimal generator of a $C_0$-semigroup on $\mathcal{H}$ satisfying $\|e^{t\mathcal{A}}\|_{\mathcal{L}(\mathcal{H})} \leq e^{\frac{(3\xi+1)}{2}t}$.
\end{proof}

The next result ensures that the energy
	\begin{equation}\label{J43.1}
		E_u(t)=\frac{1}{2}\int_0^Lu^2(x,t)dx+\frac{h}{2}\int_0^L\int_0^1 b(x)u^2(x,t-\rho h)\rho dx,
	\end{equation}
	associated of the system \eqref{J8}
%	\begin{equation*}
%		\begin{cases}
%			u_{t}+u_{x}+u_{xxx}-u_{xxxxx}+a\left(x\right)u(x,t)\\ \quad  \quad  \quad  \quad  \quad  \quad  \quad  \quad  \quad  \quad \quad +b(x)u(x,t-h)=0 & x \in (0,L),~ t>0,\\
%			u\left(  0,t\right)  =u\left(  L,t\right)  =u_{x}\left(  0,t\right)
%			=u_{x}\left(  L,t\right)  =u_{xx}\left(  L,t\right)  =0 & t>0,\\
%			u\left(  x,0\right)  =u_{0}\left(  x\right)  & x \in (0,L),\\
%			u(x,t)=z_0(x,t) & x \in (0,L),~t \in (-h,0),
%		\end{cases}
%	\end{equation*}
decays exponentially, and it is a consequence of the analysis made in the Appendix \ref{ApA}.
	\begin{proposition}\label{P8J}
		Assume that $a$ and $b$ are nonnegative function in $L^\infty(0,L)$, $b(x)\geq b_0>0$ in $\omega$, $L<\pi \sqrt{3}$ and $\xi>1$. So, there exists $\delta>0$ (depending on $\xi,L,h$) such that is, $\|b\|_{\infty} \leq \delta$ then, for every $(u_0,z_0(\cdot,-h(\cdot))) \in \mathcal{H}$ the energy of system $E_u$, defined in \eqref{J43.1}, is exponentially stable.  More precisely, there exists $T_0>0$ and two positive constants $\nu$ and $C$ such that 
		\begin{equation*}
			E_u(t) \leq C e^{-\nu t} E_u(0), \hbox{ for all } t > T_0.
		\end{equation*}
\end{proposition}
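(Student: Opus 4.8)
The plan is to prove Proposition \ref{P8J} by the decoupling/perturbation strategy announced in the heuristic, following \cite{Valein}. The starting point is that the companion system studied in Appendix \ref{ApA} — the auxiliary Kawahara system with damping $b(x)u$ and delay feedback $b(x)u(x,t-h)$ (and the $\xi$-perturbation) — is already shown there to be exponentially stable for $L<\pi\sqrt3$ and $\xi>1$, with an explicit rate; call its energy $\widetilde E_u$ and its decay $\widetilde E_u(t)\le \widetilde C e^{-\widetilde\nu t}\widetilde E_u(0)$. The system \eqref{J8} differs from that auxiliary system only in that its damping potential is $a(x)+\xi b(x)$ rather than $\xi b(x)$ (equivalently, \eqref{J8} $=$ auxiliary system with the extra \emph{nonnegative} damping $a(x)u$ removed, or added, depending on the bookkeeping in the appendix). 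Since $a\ge0$, adding this term can only help dissipation: the energy identity for \eqref{J8} will read
\begin{equation*}
\frac{d}{dt}E_u(t)\le -u_{xx}^2(0,t)-\int_0^L a(x)u^2(x,t)\,dx+\frac12\int_0^L(b-\xi b)(x)\,u^2(x,t)\,dx+\frac12\int_0^L(b-\xi b)(x)\,u^2(x,t-h)\,dx,
\end{equation*}
which for $\xi>1$ is $\le0$, exactly as computed just above for \eqref{J42}; so $E_u$ is already non-increasing, and the whole point is to upgrade this to an exponential rate.

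The key steps, in order, are as follows. First, reduce to an observability-type estimate: it suffices to find $T_0>0$ and $c>0$ such that every mild solution of \eqref{J8} satisfies $E_u(T_0)\le (1-c)E_u(0)$; combined with the semigroup property and the monotonicity of $E_u$ this gives the exponential decay with $\nu=-\frac{1}{T_0}\ln(1-c)$ and $C$ absorbing the transient. Second, compare with the appendix system: write $u$, the solution of \eqref{J8}, and let $v$ solve the auxiliary (appendix) system with the \emph{same} initial data; then $w:=u-v$ solves the same linear equation with right-hand side $-a(x)v$ (a lower-order, $L^2$-bounded-in-time source supported where $a>0$), zero initial data. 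Using the linear source estimates from Section \ref{Sec2} (Proposition \ref{P2J}, with $\mathcal A_0$ in place of $\mathcal A$) one bounds $\|w\|_{\mathcal B}$ by $\|a v\|_{L^1(0,T;L^2)}\lesssim \|a\|_\infty \|v\|_{L^2(0,T;L^2)}$, and the appendix decay of $v$ controls the latter. Third, invoke the appendix observability inequality for $v$ (the inequality that underlies $\widetilde E_u$'s decay — an estimate of $\widetilde E_u(0)$ by the dissipated quantities $\int_0^T u_{xx}^2(0,t)\,dt+\int_0^T\!\!\int_\omega b u^2$), transfer it to $u$ paying the error $\|w\|_{\mathcal B}$, and conclude $E_u(0)\le K\big(\int_0^{T_0} u_{xx}^2(0,t)\,dt+\int_0^{T_0}\!\!\int_0^L b u^2\big)+(\text{small})E_u(0)$, where "small" is controlled by taking $\|b\|_\infty\le\delta$ small; the dissipated quantities are $\le E_u(0)-E_u(T_0)$ up to constants, giving the required strict decrease.

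The smallness of $\|b\|_\infty$ enters in two places: in Proposition \ref{P8Ja} it guarantees $\mathcal A=\mathcal A_0+B$ generates a $C_0$-semigroup with a controlled growth bound, and here it is what makes the perturbation $w=u-v$ (whose size is governed by the delay-feedback coefficient $b$ through the appendix constants, and by $a$) negligible against $E_u(0)$ over the fixed window $[0,T_0]$; this is why $\delta$ is allowed to depend on $\xi$, $L$, $h$ but the conclusion is only "for $t>T_0$". The main obstacle I expect is the clean transfer of the appendix observability inequality from $v$ to $u$: one must quantify, uniformly in the initial data on the $E_u(0)$-sphere, how the perturbation term $a(x)v$ degrades the observability constant, and check that the boundary trace $u_{xx}(0,t)$ and the localized term $\int_\omega b\,u^2$ that appear in the dissipation of \eqref{J8} are exactly (up to harmless constants) the quantities the appendix estimate produces — this is the step where the hypotheses $L<\pi\sqrt3$, $\xi>1$, $b\ge b_0$ on $\omega$, and $\|b\|_\infty\le\delta$ all get used simultaneously, and where care is needed so that no circularity with the appendix is introduced.
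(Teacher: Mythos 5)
Your overall strategy (compare \eqref{J8} with the exponentially stable auxiliary system of Appendix \ref{ApA}, treat the difference as a Duhamel perturbation, and use the smallness of $\|b\|_\infty$ over a fixed window) is in the right spirit and is indeed close to what the paper does, but two of your concrete claims are wrong in a way that breaks the argument. First, you misidentify how \eqref{J8} differs from the appendix system \eqref{J44}: both contain $a(x)u$ and the delay term $b(x)u(x,t-h)$; the appendix system has the \emph{additional} dissipative term $\xi b(x)u$, which is exactly the term missing in \eqref{J8}. Consequently, if $v$ solves the auxiliary system with the same data, the difference $w=u-v$ solves the linear delayed equation with zero data and source $+\xi b(x)v$, \emph{not} $-a(x)v$. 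This is not a bookkeeping detail: your error bound $\|w\|_{\mathcal B}\lesssim\|a\|_\infty\|v\|_{L^2(0,T;L^2)}$ would require $\|a\|_\infty$ to be small, which is not assumed anywhere (in Proposition \ref{P8J} only $\|b\|_\infty\le\delta$ is small; $a$ is an arbitrary nonnegative $L^\infty$ function, possibly even zero). With the correct source $\xi b(x)v$, the Duhamel term is controlled by $\xi\|b\|_\infty$ times the decaying appendix solution, and this is precisely how the hypothesis $\|b\|_\infty\le\delta$ closes the estimate — this is the paper's decomposition \eqref{V1}–\eqref{V3}.

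Second, your claim that the energy \eqref{J43.1} of \eqref{J8} ``is already non-increasing'' is false: the identity with the factors $(b-\xi b)$ that you quote is the one for the \emph{perturbed} system \eqref{J42}/\eqref{J44}; for \eqref{J8} itself the computation at the beginning of Section \ref{Sec3} gives only $\frac{d}{dt}E_u(t)\le\int_0^L b(x)u^2(x,t)\,dx$, which may be positive. This matters because your reduction step (``$E_u(T_0)\le(1-c)E_u(0)$ plus monotonicity of $E_u$ gives exponential decay'') uses that monotonicity; since it fails, one must instead iterate the decomposition on each interval $[mT_0,(m+1)T_0]$ (as the paper does with \eqref{V4}–\eqref{V6}) and absorb the possible growth between multiples of $T_0$ by a crude bound of the form $e^{(2\|b\|_\infty+\nu)s}$ — which is also why the conclusion is only stated for $t>T_0$. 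A further, more minor, point: the appendix does not provide an observability inequality to be transferred; it proves exponential decay of \eqref{J44} by a Lyapunov functional (Proposition \ref{P7J}), and it is that decay, combined with the semigroup growth bound of Proposition \ref{P8Ja} and the Duhamel estimate for the source $\xi b(x)v$, that must be used.
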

\begin{proof}
To prove this result, let us consider the two systems
\begin{equation}
	\begin{cases}\label{V1}
		v_{t}(x,t)+v_{x}(x,t)+v_{xxx}(x,t)-v_{xxxxx}(x,t)+a\left(x\right)v(x,t)\\ \quad  \quad  \quad  \quad  \quad  \quad  \quad  \quad  \quad  \quad \quad+b(x)z^1(1)+\xi b(x)v(x,t)=0 & x \in (0,L),~ t>0,\\
		v\left(  0,t\right)  =v\left(  L,t\right)  =v_{x}\left(  0,t\right)
		=v_{x}\left(  L,t\right)  =v_{xx}\left(  L,t\right)  =0 & t>0,\\
		v\left(  x,0\right)  =u_{0}\left(  x\right)  & x \in (0,L),\\
		h z^1_t(x,\rho,t)+z^1_\rho(x,\rho,t)=0 & x \in (0,L), ~\rho \in (0,1), ~ t>0,\\
		z^1(x,0,t)=v(x,t) & x \in (0,L),~ t>0,\\
		z^1(x,\rho,0)=v(x,-\rho h)=z_0(x,-\rho h) & x \in (0,L), ~\rho \in (0,1)
	\end{cases}
\end{equation}
and
\begin{equation}
	\begin{cases}\label{V2}
		w_{t}(x,t)+w_{x}(x,t)+w_{xxx}(x,t)-w_{xxxxx}(x,t)\\ \quad  \quad  \quad  \quad  \quad  \quad  \quad  \quad  +a\left(x\right)w(x,t)+b(x)z^2(1)=\xi b(x)v(x,t) & x \in (0,L),~ t>0,\\
		w\left(  0,t\right)  =w\left(  L,t\right)  =w_{x}\left(  0,t\right)
		=w_{x}\left(  L,t\right)  =w_{xx}\left(  L,t\right)  =0 & t>0,\\
		w\left(  x,0\right)  =0  & x \in (0,L),\\
		h z^2_t(x,\rho,t)+z^2_\rho(x,\rho,t)=0 & x \in (0,L), ~\rho \in (0,1), ~ t>0,\\
		z^2(x,0,t)=w(x,t) & x \in (0,L),~ t>0,\\
		z^2(x,\rho,0)=0 & x \in (0,L), ~\rho \in (0,1).
	\end{cases}
\end{equation}
Define $u=v+w$ and $z=z^1+z^2$, then
\begin{equation}
	\begin{cases}\label{V3}
		u_{t}(x,t)+u_{x}(x,t)+u_{xxx}(x,t)-u_{xxxxx}(x,t)\\ \quad  \quad  \quad  \quad  \quad  \quad  \quad  \quad  \quad  \quad \quad+a\left(x\right)u(x,t)+b(x)z(1)=0 & x \in (0,L),~ t>0,\\
		u\left(  0,t\right)  =u\left(  L,t\right)  =u_{x}\left(  0,t\right)
		=u_{x}\left(  L,t\right)  =u_{xx}\left(  L,t\right)  =0 & t>0,\\
		u\left(  x,0\right)  =u_{0}\left(  x\right)  & x \in (0,L),\\
		h z_t(x,\rho,t)+z_\rho(x,\rho,t)=0 & x \in (0,L), ~\rho \in (0,1), ~ t>0,\\
		z(x,0,t)=u(x,t) & x \in (0,L),~ t>0,\\
		z(x,\rho,0)=z_0(x,-\rho h) & x \in (0,L), ~\rho \in (0,1).
	\end{cases}
\end{equation}
Fix $0<\eta<1$ and pick 
$$
T_0=\frac{1}{2\gamma}\ln\left( \frac{2\xi \kappa }{\eta}\right)+1,
$$
so  $\kappa e^{-2 \gamma T_0}<\frac{\eta}{2 \xi},$ where $\alpha, \beta, \gamma, \kappa$ are given according to Proposition \ref{P7J}. As we have that 
$E_v(0)\leq \xi E_u(0),$
% Observe that 
%\begin{align*}
%	E_v(0)=& \frac{1}{2}\int_{0}^{L}v^2(x,0)dx+\frac{\xi h}{2}\int_{0}^{L}\int_{0}^{1}b(x)v^2(x,-\rho h)d\rho dx\\
%	\leq &  \frac{\xi}{2}\int_{0}^{L}u_{0}^{2}(x,0)dx+\frac{\xi h}{2}\int_{0}^{L}\int_{0}^{1}b(x)z_{0}^{2}(x,-\rho h)d\rho dx\\
%	=& \xi E_{u}(0),
%\end{align*}
we obtain
$$E_v(T_0)\leq \kappa e^{-2\gamma T_0} E_v(0) \leq \frac{\eta}{2\xi}E_v(0)\leq \frac{\eta}{2} E_u(0).$$ 

Now, consider $\varepsilon>0$ such that $0<\eta+\varepsilon<1$ and 
\begin{equation*}
	\|b\|_{\infty} \leq \min \left\{ \frac{\sqrt{\varepsilon}}{\sqrt{\xi^3}\kappa^{\frac{1}{2}}e^{\frac{(3\xi+1)}{2}\left(\frac{1}{2\gamma}\ln\left( \frac{2\xi \kappa }{\eta}\right)+2\right) }},1\right\}.
\end{equation*}
Therefore,
\begin{align*}
	E_u(T_0)
	%=&\frac{1}{2}\int_{0}^{L}u^2(x,T_0)dx+\frac{h}{2}\int_{0}^{L}\int_{0}^{1}b(x)u^2(x,T_0-\rho h)d\rho dx\\
%	\leq & \int_{0}^{L}v^2(x,T_0)dx+\int_{0}^{L}w^2(x,T_0)dx\\
%	&+h\int_{0}^{L}\int_{0}^{1}b(x)v^2(x,T_0-\rho h)d\rho dx+h\int_{0}^{L}\int_{0}^{1}b(x)w^2(x,T_0-\rho h)d\rho dx\\
	\leq & \int_{0}^{L}v^2(x,T_0)dx+h\xi\int_{0}^{L}\int_{0}^{1}b(x)v^2(x,T_0-\rho h)d\rho dx\\
	&+\int_{0}^{L}w^2(x,T_0)dx+h\xi \|b\|_\infty \int_{0}^{L}\int_{0}^{1}w^2(x,T_0-\rho h)d\rho dx\\
	\leq & 2E_v(T_0)+\|(w(T_0,w(\cdot,T_0-h(\cdot))))\|_{\mathcal{H}}.
\end{align*}
Noting that 
\begin{equation*}
	(w(T_0),w(\cdot,T_0-h(\cdot))) =\int_{0}^{T_0}e^{\mathcal{A}(t-s)}(\xi b(x)v,0)ds,
\end{equation*}
we get
\begin{align*}
	 \|(w(T_0),w(\cdot,T_0-h(\cdot)))\|_{\mathcal{H}} 
	 %\leq & \int_{0}^{T} \|e^{\mathcal{A}(T_0-s)}(\xi b(x)v,0)\|_{\mathcal{H}}ds\\
	% \leq& \int_{0}^{T_0} e^{\frac{(3\xi+1)}{2}(T_0-s)}\|\xi b(x) v\|_{L^2(0,L)}ds\\
	 	 \leq& \int_{0}^{T_0} e^{\frac{(3\xi+1)}{2}(T_0-s)}\left(\int_{0}^{L} |\xi b(x) v|^2 dx \right)^{\frac{1}{2}}ds\\
%	 	 \leq & \xi \|b\|_{\infty}\int_{0}^{T}  e^{\frac{(3\xi+1)}{2}(T_0-s)} \left(\int_{0}^{L} |v(x,s)|^2 dx\right)^{\frac{1}{2}}ds\\
	 	% \leq & \sqrt{2} \xi \|b\|_{\infty}\int_{0}^{T_0}  e^{\frac{(3\xi+1)}{2}(T_0-s)} \left(\frac{1}{2}\int_{0}^{L} |v(x,s)|^2 dx\right)^{\frac{1}{2}}ds\\
	 	 \leq & \sqrt{2}\xi \|b\|_{\infty} \int_{0}^{T_0} e^{\frac{(3\xi+1)}{2}(T_0-s)} \kappa^{\frac{1}{2}}e^{-\gamma s}E_{v}^{\frac{1}{2}}(0)ds\\
	 	 \leq & \sqrt{2} \xi \|b\|_{\infty} \kappa^{\frac{1}{2}} E_{v}^{\frac{1}{2}}(0) \int_{0}^{T_0}e^{\frac{(3\xi+1)}{2}(T_0-s)} e^{-\gamma s}ds\\
	 	 \leq&  2 \xi^2 \|b\|_{\infty}^{2} e^{(3\xi+1)T_0} \kappa E_{v}(0),
\end{align*}
where we have used that 
%Moreover, observe that
\begin{align*}
 \int_{0}^{T_0}e^{\frac{(3\xi+1)}{2}(T_0-s)} e^{-\gamma s}ds 
 %=& \int_{0}^{T_0}e^{{\frac{(3\xi+1)}{2}T_0}- \left(\frac{(3\xi+1)}{2}+\gamma\right)s} dx\\
 %=& e^{\frac{(3\xi+1)}{2}T_0}\int_{0}^{T_0}e^{- \left(\frac{(3\xi+1)}{2}+\gamma\right)s}ds\\
 %=&e^{\frac{(3\xi+1)}{2}T_0} \left(-\left.\frac{e^{- \left(\frac{(3\xi+1)}{2}+\gamma\right)s}}{\frac{(3\xi+1)}{2}+\gamma}\right) \right|_{0}^{T_0}\\
 %=&  e^{\frac{(3\xi+1)}{2}T_0} \left( -\frac{e^{- \left(\frac{(3\xi+1)}{2}+\gamma\right)T_0}}{\frac{(3\xi+1)}{2}+\gamma}+\frac{1}{\frac{(3\xi+1)}{2}+\gamma}\right)\\
= 	& \frac{e^{\frac{(3\xi+1)}{2}T_0}-e^{-\gamma T_0} }{\frac{(3\xi+1)}{2}+\gamma} \quad \text{and} \quad \frac{(3\xi+1)}{2}+\gamma>2 .
\end{align*}
%So, it follows that
%\begin{equation*}
% \|(w(T_0),w(\cdot,T_0-h(\cdot)))\|_{\mathcal{H}}^{2} \leq  2 \xi^2 \|b\|_{\infty}^{2} e^{(3\xi+1)T_0} \kappa E_{v}(0),
%\end{equation*}
%and $\frac{(3\xi+1)}{2}+\gamma>2$. 
Therefore, by the previous inequality we have
\begin{equation*}
	E_u(T_0)\leq \eta E_u(0)+2 \xi^3 \|b\|_{\infty}^{2} e^{(3\xi+1)T_0} \kappa E_{u}(0)<(\eta+\varepsilon)E_u(0).
\end{equation*}

Finally, for $T_0>0$ defined in \eqref{To}, let us consider the following two systems
\begin{equation}
	\begin{cases}\label{V4}
		v_{t}(x,t)+v_{x}(x,t)+v_{xxx}(x,t)-v_{xxxxx}(x,t)+a\left(x\right)v(x,t)\\ \quad  \quad  \quad  \quad  \quad  \quad  \quad  \quad  \quad  \quad \quad+b(x)z^1(1)+\xi b(x)v(x,t)=0 & x \in (0,L),~ t>0,\\
		v\left(  0,t\right)  =v\left(  L,t\right)  =v_{x}\left(  0,t\right)
		=v_{x}\left(  L,t\right)  =v_{xx}\left(  L,t\right)  =0 & t>0,\\
		v\left(  x,0\right)  =u\left(  x, T_0\right)  & x \in (0,L),\\
		h z^1_t(x,\rho,t)+z^1_\rho(x,\rho,t)=0 & x \in (0,L), ~\rho \in (0,1), ~ t>0,\\
		z^1(x,0,t)=v(x,t) & x \in (0,L),~ t>0,\\
		z^1(x,\rho,0)=z(x,\rho, T_0) & x \in (0,L), ~\rho \in (0,1)
	\end{cases}
\end{equation}
and
\begin{equation}
	\begin{cases}\label{V5}
		y_{t}(x,t)+y_{x}(x,t)+y_{xxx}(x,t)-y_{xxxxx}(x,t)+a\left(x\right)y(x,t)\\ \quad  \quad  \quad  \quad  \quad  \quad  \quad  \quad  \quad  \quad \quad+b(x)z^2(1)=\xi b(x)v(x,t) & x \in (0,L),~ t>0,\\
		y\left(  0,t\right)  =y\left(  L,t\right)  =y_{x}\left(  0,t\right)
		=y_{x}\left(  L,t\right)  =y_{xx}\left(  L,t\right)  =0 & t>0,\\
		y\left(  x,0\right)  =0  & x \in (0,L),\\
		h z^2_t(x,\rho,t)+z^2_\rho(x,\rho,t)=0 & x \in (0,L), ~\rho \in (0,1), ~ t>0,\\
		z^2(x,0,t)=y(x,t) & x \in (0,L),~ t>0,\\
		z^2(x,\rho,0)=0 & x \in (0,L), ~\rho \in (0,1),
	\end{cases}
\end{equation}
where $z^1(x,\rho,t)=v(x,t-\rho h)$ and $z^2(x,\rho,t)=w(x,t-\rho h)$. Define $w(x,t)=v(x,t)+y(x,t)$ and $\overline{z}(x,\rho,t)=z^1(x,\rho,t)+z^2(x,\rho,t)$, we get
\begin{equation}
	\begin{cases}\label{V6}
		w_{t}(x,t)+w_{x}(x,t)+w_{xxx}(x,t)-w_{xxxxx}(x,t)\\ \quad  \quad  \quad  \quad  \quad  \quad  \quad  \quad  \quad  \quad \quad+a\left(x\right)w(x,t)+b(x)\overline{z}(1)=0 & x \in (0,L),~ t>0,\\
		w\left(  0,t\right)  =w\left(  L,t\right)  =w_{x}\left(  0,t\right)
		=w_{x}\left(  L,t\right)  =w_{xx}\left(  L,t\right)  =0 & t>0,\\
		w\left(  x,0\right)  =u(x,T_0)  & x \in (0,L),\\
		h \overline{z}_t(x,\rho,t)+\overline{z}_\rho(x,\rho,t)=0 & x \in (0,L), ~\rho \in (0,1), ~ t>0,\\
		\overline{z}(x,0,t)=w(x,t) & x \in (0,L),~ t>0,\\
		\overline{z}(x,\rho,0)=z(x,\rho,T_0) & x \in (0,L), ~\rho \in (0,1).
	\end{cases}
\end{equation}
Therefore, $w(x,t)=u(x,t+T_0)$ and $\overline{z}(x,\rho,t)=z(x,\rho,t+T_0)$.  Thanks to the fact that $E_v(0) \leq \xi E_u(T_0)$ it follows that
\begin{align*}
	E_u(2T_0)
	%=& \frac{1}{2}\int_{0}^{L}u^2(x,2T_0)dx+\frac{h}{2}\int_{0}^{L}\int_{0}^{1}b(x)u^2(x,2T_0-\rho h)d\rho dx\\
	%=& \frac{1}{2}\int_{0}^{L}w^2(x,T_0)dx+\frac{h}{2}\int_{0}^{L}\int_{0}^{1}b(x)w^2(x,T_0-\rho h)d\rho dx\\
	\leq & \int_{0}^{L}v^2(x,T_0)dx+\int_{0}^{L}y^2(x,T_0)dx+h\int_{0}^{L}b(x)v^2(x,T_0-\rho h)d\rho dx\\
	&+h\int_{0}^{L}\int_{0}^{1}b(x)y^2(x,T_0-\rho h)d\rho dx\\
	\leq & 2E_v(T_0)+  2 \xi^2 \|b\|_{\infty}^{2} e^{(3\xi+1)T_0} \kappa E_{v}(0)\\
	\leq& \frac{\eta}{\xi} E_v(0)+\varepsilon E_u(T_0)\\
	\leq& \eta E_u(T_0)+\varepsilon E_u(T_0)\\
	\leq & (\eta +\varepsilon)^2E_u(0).
\end{align*}
Preceding in an analogous way, we get
\begin{equation*}
	E_u(mT_0) \leq (\eta +\varepsilon)^m E_u(0),
\end{equation*}
for all $m \in \mathbb{N}^*$. Now, to finish, let $t>T_0$, then there exists $m \in \mathbb{N}^*$ such that $t=mT_0+s$ with $0 \leq s<T_0$,  we have 
\begin{align*}
	E_u(t) 
	%\leq& e^{2 \|b\|_{\infty}(t-mT_0)}E_u(mT_0)\\
%	\leq& e^{2\|b\|_\infty r}(\eta +\varepsilon)^mE_u(0)\\
%	=&e^{2\|b\|_\infty r}e^{-\nu m T_0}E_u(0)\\
%	=&e^{2\|b\|_\infty r}e^{-\nu (t-r)}E_u(0)\\
	\leq& e^{(2\|b\|_\infty +\nu)s}e^{-\nu t}E_u(0),
\end{align*}
where 
\begin{equation}\label{nu}
	\nu=\frac{1}{T_0}\ln \left(\frac{1}{(\eta+\varepsilon)} \right),
\end{equation}
showing the proposition.
\end{proof}

\subsection{Proof of Theorem \ref{T1.2J}}
%
%With the previous result in hand we are in position to prove our third main result of the manuscript, precisely, we obtain the local exponential stability result  for the nonlinear higher order KdV equation \eqref{fd1} in the case $\supp b \not\subset \supp a$.  

%\vspace{0.2cm}

By a classical way (see Section \ref{ss2.3}) we can ensures that  the the system \eqref{fd1} is well-posed.  
Additionally, $u$ satisfies
$$
	\|(u(\cdot,t),u(\cdot,t-h(\cdot)))\|_{\mathcal{H}}^{2} \leq e^{2\xi\|b\|_\infty t} \|(u_0,z_0(\cdot,-h(\cdot)))\|_{\mathcal{H}}^{2},
$$
which implies that
$$\|u\|_{C([0,T],L^2(0,L))} \leq e^{\xi \|b\|_\infty T} \|(u_0,z_0(\cdot,-h(\cdot)))\|_{\mathcal{H}}
$$
and
$$\|u\|_{L^2(0,T,L^2(0,L))} \leq T^{\frac{1}{2}}e^{\xi \|b\|_\infty T} \|(u_0,z_0(\cdot,-h(\cdot)))\|_{\mathcal{H}}.$$

Let us now divide the rest of the proof in several steps.

\vspace{0.2cm}
\noindent\textbf{Step 1:} \textit{First estimate for the linear system associated to \eqref{fd1}.}
\vspace{0.2cm}

Multiplying the linear system associated to \eqref{fd1} by $u$, integrating by parts we have
\begin{align*}
	\frac{3}{2}\int_{0}^{T}\int_{0}^{L} u_x^2dx+\frac{5}{2}\int_{0}^{T}\int_{0}^{L} u_{xx}^{2}dx 
	%=& \frac{1}{2}\int_{0}^{L}x(u_{0}^{2}(x)-u^2(x,T))dx+\frac{1}{2}\int_{0}^{T}\int_{0}^{L}u^2 dxdt\\
	%&-\int_{0}^{T}\int_{0}^{L}xa(x)u^2 dxdt -\int_{0}^{T} \int_{0}^{L}xb(x)u(x,t-h)u(x,t)dxdt\\
	%\leq &\frac{L}{2}\|u_0\|_{L^2(0,L)}^2+\frac{1}{2}\int_{0}^{T}\int_{0}^{L}u^2(x,t)dxdt+L\|a\|_\infty\int_{0}^{T}\int_{0}^{L}u^2(x,t)dxdt\\
	%&+\frac{L}{2}\|b\|_\infty \int_{0}^{L}\int_{0}^{T}u^2(x,t-h)dxdt+\frac{L}{2}\|b\|_\infty\int_{0}^{T}\int_{0}^{L}u^2(x,t)dxdt\\
	%\leq & \frac{L}{2}\|u_0\|_{L^2(0,L)}^2+\frac{1}{2}(1+2L\|a\|_{\infty}+L\|b\|_\infty)\int_{0}^{T}\int_{0}^{L}u^2(x,t)dxdt\\
	%&+\frac{L}{2}\|b\|_\infty\int_{0}^{T}\int_{0}^{L}u^2(x,t-h)d\rho dx \\
	\leq & \frac{L}{2}\|u_0\|_{L^2(0,L)}^2+\frac{1}{2}(1+2L\|a\|_{\infty}+L\|b\|_\infty)\int_{0}^{T}\int_{0}^{L}u^2(x,t)dxdt\\
	&+\frac{L}{2h}h\xi \|b\|_\infty\int_{0}^{T}\int_{0}^{L}u^2(x,t-h)d\rho dx \\
	%	\leq & \frac{L}{2}\|u_0\|_{L^2(0,L)}^2+\frac{1}{2}\left(1+\frac{L}{2h}+2L\|a\|_{\infty}+L\|b\|_\infty\right)\int_{0}^{T}\|(u(\cdot,t),u(\cdot,t-h(\cdot)))\|_{\mathcal{H}}^{2}dt \\
	\leq & \frac{L}{2}\|u_0\|_{L^2(0,L)}^2\\&+\frac{1}{2}\left(1+\frac{L}{h}+2L\|a\|_{\infty}+L\|b\|_\infty\right)Te^{2\xi\|b\|_\infty T} \|(u_0,z_0(\cdot,-h(\cdot)))\|_{\mathcal{H}}^{2}.
\end{align*}

\vspace{0.2cm}
\noindent\textbf{Step 2:} \textit{First estimate for the nonlinear system \eqref{fd1}.}
\vspace{0.2cm}

Now,  multiplying the nonlinear system \eqref{fd1} by $u$, integrating by parts we have 
\begin{align*}
	\frac{1}{2}\int_{0}^{T}\int_{0}^{L} u_x^2dx+\frac{5}{2}\int_{0}^{T}\int_{0}^{L} u_{xx}^{2}dx \leq & C_1(a,b,h,L)(1+Te^{2\xi\|b\|_\infty T}+e^{4\xi\|b\|_\infty T}) \|(u_0,z_0(\cdot,-h(\cdot)))\|_{\mathcal{H}}^{2}\\
	&+\frac{\sqrt{LT}}{4\varepsilon}\|(u_0,z_0(\cdot,-h(\cdot)))\|_{\mathcal{H}}^{4},\\
\end{align*}
where
\begin{equation*}
C_1(a,b,h,L)=	\frac{1}{2}\left(1+L+\frac{L}{h}+2L\|a\|_{\infty}+L\|b\|_\infty\right).
\end{equation*}
Here, we have used that as $H^1(0,L) \hookrightarrow C([0,L])$ we obtain, using the Cauchy-Schwarz inequality, that
\begin{align*}
	\int_{0}^{T} \int_{0}^{L}|u(x,t)|^3dxdt 
	%\leq& \int_{0}^{T} \|u\|_{L^\infty(0,L)}\int_{0}^{L}u^2(x,t)dxdt\\
	%\leq & \sqrt{L} \int_{0}^{T}\|u(\cdot,t)\|_{H^1(0,L)}\int_{0}^{L}u^2(x,t)dxdt\\
	%\leq & \sqrt{L}\|u\|_{L^\infty(0,T,L^2(0,L))}^{2}\int_{0}^{T}\|u(\cdot,t)\|_{H^1(0,L)}dt\\
	\leq & \sqrt{LT}\|u\|_{L^\infty(0,T,L^2(0,L))}^{2}\|u\|_{L^2(0,T,H^1(0,L))}\\
	\leq & \sqrt{LT} e^{2\|b\|_\infty T} \|(u_0,z_0(\cdot,-h(\cdot)))\|_{\mathcal{H}}^{2} \|u\|_{L^2(0,T,H^1(0,L))}.
\end{align*}
Thus,
\begin{align*}
	\frac{3}{2}\int_{0}^{T}\int_{0}^{L} u_x^2dx+\frac{5}{2}\int_{0}^{T}\int_{0}^{L} u_{xx}^{2}dx \leq & C_2(a,b,h,L)(1+\sqrt{T}+Te^{2\xi\|b\|_\infty T}+e^{4\xi\|b\|_\infty T}) \\
	& \times (\|(u_0,z_0(\cdot,-h(\cdot)))\|_{\mathcal{H}}^{2}+\|(u_0,z_0(\cdot,-h(\cdot)))\|_{\mathcal{H}}^{4}),
\end{align*}
with
\begin{equation*}
	C_2(a,b,h,L)=	\frac{1}{2}\left(1+\frac{\sqrt{L}}{4\varepsilon}+L+\frac{L}{h}+2L\|a\|_{\infty}+L\|b\|_\infty\right).
\end{equation*}

\vspace{0.2cm}
\noindent\textbf{Step 3:} \textit{Second estimate for the linear system associated to \eqref{fd1}.}
\vspace{0.2cm}

Multiplying the linear system associated to \eqref{fd1} by $xu$, integrating by parts we also have,
\begin{align*}
	\frac{3}{2}\int_{0}^{T}\int_{0}^{L} u_x^2dx+\frac{5}{2}\int_{0}^{T}\int_{0}^{L} u_{xx}^{2}dx 
	%=& \frac{1}{2}\int_{0}^{L}x(u_{0}^{2}(x)-u^2(x,T))dx+\frac{1}{2}\int_{0}^{T}\int_{0}^{L}u^2 dxdt\\
	%&-\int_{0}^{T}\int_{0}^{L}xa(x)u^2 dxdt -\int_{0}^{T} \int_{0}^{L}xb(x)u(x,t-h)u(x,t)dxdt\\
	%\leq &\frac{L}{2}\|u_0\|_{L^2(0,L)}^2+\frac{1}{2}\int_{0}^{T}\int_{0}^{L}u^2(x,t)dxdt+L\|a\|_\infty\int_{0}^{T}\int_{0}^{L}u^2(x,t)dxdt\\
	%&+\frac{L}{2} \int_{0}^{L}\int_{0}^{T}b(x)u^2(x,t-h)dxdt+\frac{L}{2}\|b\|_\infty\int_{0}^{T}\int_{0}^{L}u^2(x,t)dxdt\\
	%\leq & LE_u(0)+\frac{1}{2}(1+2L\|a\|_{\infty}+L\|b\|_\infty)\int_{0}^{T}\int_{0}^{L}u^2(x,t)dxdt\\
	%&+\frac{L}{2}\int_{0}^{T}\int_{0}^{L}b(x)u^2(x,t-h)d\rho dx \\
	%\leq & LE_u(0)+\frac{1}{2}(1+2L\|a\|_{\infty}+L\|b\|_\infty)\int_{0}^{T}\int_{0}^{L}u^2(x,t)dxdt\\
	%&+\frac{L}{2}\int_{0}^{T}\int_{0}^{L} b(x)u^2(x,t)dxdt+\frac{Lh}{2}\int_{0}^{L}\int_{0}^{1}b(x)z_0^2(x,-\rho h)d\rho dx\\
	\leq & LE_u(0)+\frac{1}{2}(1+2L\|a\|_{\infty}+2L\|b\|_\infty)\int_{0}^{T}\int_{0}^{L}u^2(x,t)dxdt\\
	&+\frac{Lh}{2}\int_{0}^{L}\int_{0}^{1}b(x)z_0^2(x,-\rho h)d\rho dx\\
	%\leq & 2LE_u(0)+(1+2L\|a\|_{\infty}+2L\|b\|_\infty)Te^{2\|b\|_\infty T}E_u(0)\\
	\leq & (2L+1+2L\|a\|_{\infty}+2L\|b\|_\infty)(1+Te^{2\|b\|_\infty T})E_u(0).
\end{align*}
	
\vspace{0.2cm}
\noindent\textbf{Step 4:} \textit{Second estimate for the nonlinear system \eqref{fd1}.}
\vspace{0.2cm}

Multiplying the system \eqref{fd1} by $xu$, integrating by parts  and using the fact that   $E_u(0) \leq 1$, yields that
\begin{align*}
	\frac{1}{2}\int_{0}^{T}\int_{0}^{L} u_x^2dx+\frac{5}{2}\int_{0}^{T}\int_{0}^{L} u_{xx}^{2}dx \leq & (2L+1+2L\|a\|_{\infty}+2L\|b\|_\infty)(1+Te^{2\|b\|_\infty T})E_u(0)\\
	& +\frac{\sqrt{LT}}{4\varepsilon}e^{4\|b\|_\infty T }E_u(0)
\end{align*}
which implies that
\begin{equation*}
		\frac{1}{2}\int_{0}^{T}\int_{0}^{L} u_x^2dx+\frac{5}{2}\int_{0}^{T}\int_{0}^{L} u_{xx}^{2}dx \leq C_3(a,b,h,L)(1+\sqrt{T}e^{4\|b\|_\infty T}+Te^{2\|b\|_\infty T})E_u(0)
\end{equation*}
where
\begin{equation*}
	C_3(a,b,h,L)=\left(2L+\frac{\sqrt{L}}{4\varepsilon}+1+2L\|a\|_{\infty}+2L\|b\|_\infty \right).
\end{equation*}
Here, we used again that $H^1(0,L) \hookrightarrow C([0,L])$ and so
\begin{align*}
	\int_{0}^{T} \int_{0}^{L}|u(x,t)|^3dxdt
	% \leq& \int_{0}^{T} \|u\|_{L^\infty(0,L)}\int_{0}^{L}u^2(x,t)dxdt\\
%	\leq & \sqrt{L} \int_{0}^{T}\|u(\cdot,t)\|_{H^1(0,L)}\int_{0}^{L}u^2(x,t)dxdt\\
%	\leq & \sqrt{L}\|u\|_{L^\infty(0,T,L^2(0,L))}^{2}\int_{0}^{T}\|u(\cdot,t)\|_{H^1(0,L)}dt\\
%	\leq & \sqrt{LT}\|u\|_{L^\infty(0,T,L^2(0,L))}^{2}\|u\|_{L^2(0,T,H^1(0,L))}\\
	\leq & \sqrt{LT} e^{2\|b\|_\infty T} E_u(0)\|u\|_{L^2(0,T,H^1(0,L))}.
\end{align*}
Consequently,
\begin{equation*}
	\|u\|_{\mathcal{B}}^{2} \leq C_3(a,b,h,L)(1+\sqrt{T}e^{4\|b\|_\infty T}+e^{2\|b\|_{\infty}T}+2Te^{2\|b\|_\infty T})E_u(0)
\end{equation*}
where
\begin{equation*}
	C_3(a,b,h,L)=\left(2L+\frac{\sqrt{L}}{4\varepsilon}+1+2L\|a\|_{\infty}+2L\|b\|_\infty \right).
\end{equation*}

\vspace{0.2cm}
\noindent\textbf{Step 5:} \textit{Asymptotic behavior of the energy \eqref{Eb}.}
\vspace{0.2cm}

Pick the initial data $\|(u_0,z_0(\cdot,-h(\cdot)))\|_{\mathcal{H}} \leq r$, where $r$ to be chosen later. The solution $u$ of \eqref{fd1} can be written as $u=u^1+u^2$ where $u^1$ is solution of 
\begin{equation*}
	\begin{cases}
		u^1_{t}(x,t)+u^1_{x}(x,t)+u^1_{xxx}(x,t)-u^1_{xxxxx}(x,t)+a\left(x\right)u^1(x,t)\\ \quad  \quad  \quad  \quad  \quad  \quad  \quad  \quad  \quad  \quad \quad+b(x)u^1(x,t-h)=0 & x \in (0,L),~ t>0,\\
		u^1\left(  0,t\right)  =u^1\left(  L,t\right)  =u^1_{x}\left(  0,t\right)
		=u^1_{x}\left(  L,t\right)  =u^1_{xx}\left(  L,t\right)  =0 & t>0,\\
		u^1\left(  x,0\right)  =u_{0}\left(  x\right)  & x \in (0,L),\\
		u^1(x,t)=z_0(x,t) & x \in (0,L),~t \in (-h,0)
	\end{cases}
\end{equation*}
and $u^2$ is solution of 
\begin{equation*}
	\begin{cases}
		u^2_{t}(x,t)+u^2_{x}(x,t)+u^2_{xxx}(x,t)-u^2_{xxxxx}(x,t)+a\left(x\right)u^2(x,t)\\ \quad  \quad  \quad  \quad  \quad  \quad  \quad  \quad  \quad  \quad \quad+b(x)u^2(x,t-h)=-u(x,t)u_x(x,t) & x \in (0,L),~ t>0,\\
		u^2\left(  0,t\right)  =u^2\left(  L,t\right)  =u^2_{x}\left(  0,t\right)
		=u^2_{x}\left(  L,t\right)  =u^2_{xx}\left(  L,t\right)  =0 & t>0,\\
		u^2\left(  x,0\right)  =  0& x \in (0,L),\\
		u^2(x,t)=0& x \in (0,L),~t \in (-h,0).
	\end{cases}
\end{equation*}
Fix $\eta \in (0,1)$,  thanks to the Proposition \ref{P8J},  there exists $T_1>0$  such that $$e^{(2\|b\|_\infty+\nu)s- \nu T_1}< \frac{\eta}{2} \iff T_1>-\frac{1}{\nu}\ln\left(\frac{\eta}{2}\right)+\left(\frac{2\|b\|_\infty}{\nu}+1\right)s$$
with $\nu$ defined by \eqref{nu} satisfying $$E_{u^1}(T_1) \leq \frac{\eta}{2}E_{u^1}(0).$$ Thus, we have thanks to the previous inequality that
\begin{equation}\label{RRR}
	\begin{split}
	E_{u}(T_1)
	%=&\frac{1}{2}\int_{0}^{L}|u(x,T_1)|^2dx+\frac{h}{2}\int_{0}^{L}\int_{0}^{1}b(x)|u(x,T_1-\rho h)|^2d\rho dx\\
	\leq & \int_{0}^{L}|u^1(x,T_1)|^2dx+\int_{0}^{L}|u^2(x,T_1)|^2dx\\
	&+h\int_{0}^{L}\int_{0}^{1}b(x)|u^1(x,T_1-\rho h)|^2d\rho dx+h\int_{0}^{L}\int_{0}^{1}b(x)|u^2(x,T_1-\rho h)|^2d\rho dx	\\
	\leq & 2E_{u^1}(T_1)+\int_{0}^{L}|u^2(x,T_1)|^2dx+h\|b\|_\infty \int_{0}^{L}\int_{0}^{1}|u^2(x,T_1-\rho h)|^2d\rho dx\\
\leq & \eta E_{u}(0)+\|(u^2(T_1,u^2(\cdot,T_1-h(\cdot))))\|_{\mathcal{H}}^{2}\\
%	\leq&  \eta E_{u}(0)+e^{(3\xi+1)T_1}\|uu_x\|_{L^1(0,T_1,L^2(0,L)}^{2}\\
%	\leq&  \eta E_{u}(0)+e^{(3\xi+1)T_1}2T_1^{\frac{1}{2}}\|u\|_{\mathcal{B}}^{4}\\
%		\leq&  \eta E_{u}(0)+e^{(3\xi+1)T_1}2T_1^{\frac{1}{2}}C_3^2(a,b,h,L)(1+\sqrt{T_1}e^{4\|b\|_\infty T_1}+e^{2\|b\|_{\infty}T_1}+2T_1e^{2\|b\|_\infty T_1})^2E_u^2(0)\\
%		=& E_{u}(0) (\eta + e^{(3\xi+1)T_1}2T_1^{\frac{1}{2}}C_3^2(a,b,h,L)(1+\sqrt{T_1}e^{4\|b\|_\infty T_1}+e^{2\|b\|_{\infty}T_1}+2T_1e^{2\|b\|_\infty T_1})^2E_u(0))\\
%		\leq & E_{u}(0) (\eta + e^{(3\xi+1)T_1}2T_1^{\frac{1}{2}}C_3^2(a,b,h,L)(1+\sqrt{T_1}e^{4\|b\|_\infty T_1}+e^{2\|b\|_{\infty}T_1}+2T_1e^{2\|b\|_\infty T_1})^2r).
	\end{split}
\end{equation}
So, with \eqref{RRR} in hand together with the estimates of the steps 1, 2, 3 and 4,  we get
\begin{align*}
	E_{u}(T_1)
%	=&\frac{1}{2}\int_{0}^{L}|u(x,T_1)|^2dx+\frac{h}{2}\int_{0}^{L}\int_{0}^{1}b(x)|u(x,T_1-\rho h)|^2d\rho dx\\
%	\leq & \int_{0}^{L}|u^1(x,T_1)|^2dx+\int_{0}^{L}|u^2(x,T_1)|^2dx\\
%	&+h\int_{0}^{L}\int_{0}^{1}b(x)|u^1(x,T_1-\rho h)|^2d\rho dx+h\int_{0}^{L}\int_{0}^{1}b(x)|u^2(x,T_1-\rho h)|^2d\rho dx\\
%	\leq & 2E_{u^1}(T_1)+\int_{0}^{L}|u^2(x,T_1)|^2dx+h\|b\|_\infty \int_{0}^{L}\int_{0}^{1}|u^2(x,T_1-\rho h)|^2d\rho dx\\
%	\leq & \eta E_{u}(0)+\|(u^2(T_1,u^2(\cdot,T_1-h(\cdot))))\|_{\mathcal{H}}^{2}\\
	\leq&  \eta E_{u}(0)+e^{(3\xi+1)T_1}\|uu_x\|_{L^1(0,T_1,L^2(0,L)}^{2}\\
	\leq&  \eta E_{u}(0)+e^{(3\xi+1)T_1}2T_1^{\frac{1}{2}}\|u\|_{\mathcal{B}}^{4}\\
%		\leq&  \eta E_{u}(0)+e^{(3\xi+1)T_1}2T_1^{\frac{1}{2}}C_3^2(a,b,h,L)(1+\sqrt{T_1}e^{4\|b\|_\infty T_1}+e^{2\|b\|_{\infty}T_1}+2T_1e^{2\|b\|_\infty T_1})^2E_u^2(0)\\
		%=& E_{u}(0) (\eta + e^{(3\xi+1)T_1}2T_1^{\frac{1}{2}}C_3^2(a,b,h,L)(1+\sqrt{T_1}e^{4\|b\|_\infty T_1}+e^{2\|b\|_{\infty}T_1}+2T_1e^{2\|b\|_\infty T_1})^2E_u(0))\\
		\leq & E_{u}(0) (\eta + e^{(3\xi+1)T_1}2T_1^{\frac{1}{2}}C_3^2(a,b,h,L)(1+\sqrt{T_1}e^{4\|b\|_\infty T_1}+e^{2\|b\|_{\infty}T_1}+2T_1e^{2\|b\|_\infty T_1})^2r).
\end{align*}
Therefore, given $\varepsilon>0$ such that $\eta+\varepsilon<1$, we can take $r>0$ small enough such that
\begin{equation*}
r<\frac{\varepsilon}{	e^{(3\xi+1)T_1}2T_1^{\frac{1}{2}}C_3^2(a,b,h,L)(1+\sqrt{T_1}e^{4\|b\|_\infty T_1}+e^{2\|b\|_{\infty}T_1}+2T_1e^{2\|b\|_\infty T_1})^2},
\end{equation*}
in order to have
\begin{equation}\label{EE_U}
	E_u(T_1) \leq (\eta+\varepsilon)E_u(0),
\end{equation}
with $\eta+\varepsilon<1$. 

Finally, the solution of the problem
\begin{equation*}
	\begin{cases}
		v_{t}(x,t)+v_{x}(x,t)+v_{xxx}(x,t)-v_{xxxxx}(x,t)+a\left(x\right)v(x,t)\\ \quad  \quad  \quad  \quad  \quad  \quad  \quad  \quad  \quad  \quad \quad+b(x)v(x,t-h)+v(x,t)v_x(x,t)=0 & x \in (0,L),~ t>0,\\
		v\left(  0,t\right)  =v\left(  L,t\right)  =v_{x}\left(  0,t\right)
		=v_{x}\left(  L,t\right)  =v_{xx}\left(  L,t\right)  =0 & t>0,\\
		v\left(  x,0\right)  =  u(x,T_1)& x \in (0,L),\\
		v(x,t)=u(x,T_1+t)& x \in (0,L),~t \in (-h,0),
	\end{cases}
\end{equation*}
can be write as $u_1+u_2$, where $u^1$ is solution of 
\begin{equation*}
	\begin{cases}
		u^1_{t}(x,t)+u^1_{x}(x,t)+u^1_{xxx}(x,t)-u^1_{xxxxx}(x,t)\\ \quad  \quad  \quad  \quad  \quad  \quad  \quad  \quad  \quad  \quad \quad+a\left(x\right)u^1(x,t)+b(x)u^1(x,t-h)=0 & x \in (0,L),~ t>0,\\
		u^1\left(  0,t\right)  =u^1\left(  L,t\right)  =u^1_{x}\left(  0,t\right)
		=u^1_{x}\left(  L,t\right)  =u^1_{xx}\left(  L,t\right)  =0 & t>0,\\
		u^1\left(  x,0\right)  =u\left(  x, T_1\right)  & x \in (0,L),\\
		u^1(x,t)=u(x,T_1+t) & x \in (0,L),~t \in (-h,0),
	\end{cases}
\end{equation*}
and $u^2$ is solution of 
\begin{equation*}
	\begin{cases}
		u^2_{t}(x,t)+u^2_{x}(x,t)+u^2_{xxx}(x,t)-u^2_{xxxxx}(x,t)+a\left(x\right)u^2(x,t)\\ \quad  \quad  \quad  \quad  \quad  \quad  \quad  \quad  \quad  \quad \quad+b(x)u^2(x,t-h)=-v(x,t)v_x(x,t) & x \in (0,L),~ t>0,\\
		u^2\left(  0,t\right)  =u^2\left(  L,t\right)  =u^2_{x}\left(  0,t\right)
		=u^2_{x}\left(  L,t\right)  =u^2_{xx}\left(  L,t\right)  =0 & t>0,\\
		u^2\left(  x,0\right)  =  0& x \in (0,L),\\
		u^2(x,t)=0& x \in (0,L), ~t \in (-h,0),
	\end{cases}
\end{equation*}
From the uniqueness of solutions, we obtain that $v(x,t)=u(x,T_1+t)$ and $v(x,t-\rho h)=u(x,t+T_1-\rho h)$ with $\rho \in (0,1)$.  Moreover,  analogously as we did before 
\begin{align*}
	E_{u}(2T_1)
%	=& \frac{1}{2}\int_{0}^{L}|v(x,T_1)|^2dx+\frac{h}{2}\int_{0}^{L}\int_{0}^{1}b(x)|v(x,T_1-\rho h)|^2d\rho dx\\
%	\leq & \int_{0}^{L}|u^1(x,T_1)|^2dx+\int_{0}^{L}|u^2(x,T_1)|^2dx\\
%	&+h\int_{0}^{L}\int_{0}^{1}b(x)|u^1(x,T_1-\rho h)|^2d\rho dx+h\int_{0}^{L}\int_{0}^{1}b(x)|u^2(x,T_1-\rho h)|^2d\rho dx\\
%	\leq & 2E_{u^1}(T_1)+\int_{0}^{L}|u^2(x,T_1)|^2dx+h\|b\|_\infty \int_{0}^{L}\int_{0}^{1}|u^2(x,T_1-\rho h)|^2d\rho dx\\
	\leq & \eta E_{u^1}(0)+\|(u^2(T_1,u^2(\cdot,T_1-h(\cdot))))\|_{\mathcal{H}}^{2}
	%\\
%	\leq&  \eta E_{u}(T_1)+e^{(3\xi+1)T_1}\|vv_x\|_{L^1(0,T_1,L^2(0,L)}^{2}\\
%	\leq&  \eta E_{u}(T_1)+e^{(3\xi+1)T_1}2T_1^{\frac{1}{2}}\|v\|_{\mathcal{B}}^{4}\\
%	\leq&  \eta E_{u}(T_1)+e^{(3\xi+1)T_1}2T_1^{\frac{1}{2}}C_3^2(a,b,h,L)(1+\sqrt{T_1}e^{4\|b\|_\infty T_1}+e^{2\|b\|_{\infty}T_1}+2T_1e^{2\|b\|_\infty T_1})^2E_v^2(0)\\
%	=&  \eta E_{u}(T_1)+e^{(3\xi+1)T_1}2T_1^{\frac{1}{2}}C_3^2(a,b,h,L)(1+\sqrt{T_1}e^{4\|b\|_\infty T_1}+e^{2\|b\|_{\infty}T_1}+2T_1e^{2\|b\|_\infty T_1})^2E_u^2(T_1)\\
%	\leq&  \eta (\eta+\varepsilon)E_{u}(0)\\&+e^{(3\xi+1)T_1}2T_1^{\frac{1}{2}}C_3^2(a,b,h,L)(1+\sqrt{T_1}e^{4\|b\|_\infty T_1}+e^{2\|b\|_{\infty}T_1}+2T_1e^{2\|b\|_\infty T_1})^2(\eta+\varepsilon)^2E_u^2(0)\\
%	\leq&   (\eta+\varepsilon)E_{u}(0)(\eta+e^{(3\xi+1)T_1}2T_1^{\frac{1}{2}}C_3^2(a,b,h,L)(1+\sqrt{T_1}e^{4\|b\|_\infty T_1}+e^{2\|b\|_{\infty}T_1}+2T_1e^{2\|b\|_\infty T_1})r)\\
%	\leq&   (\eta+\varepsilon)E_{u}(0)(\eta+e^{(3\xi+1)T_1}2T_1^{\frac{1}{2}}C_3^2(a,b,h,L)(1+\sqrt{T_1}e^{4\|b\|_\infty T_1}+e^{2\|b\|_{\infty}T_1}+2T_1e^{2\|b\|_\infty T_1})r)\\
%	\leq& (\eta +\varepsilon)^2 E_{u}(0)
\end{align*}

So, by the previous inequality, using again steps 1, 2, 3, 4 and \eqref{EE_U}, we have that
\begin{align*}
	E_{u}(2T_1)
%	=& \frac{1}{2}\int_{0}^{L}|v(x,T_1)|^2dx+\frac{h}{2}\int_{0}^{L}\int_{0}^{1}b(x)|v(x,T_1-\rho h)|^2d\rho dx\\
%	\leq & \int_{0}^{L}|u^1(x,T_1)|^2dx+\int_{0}^{L}|u^2(x,T_1)|^2dx\\
%	&+h\int_{0}^{L}\int_{0}^{1}b(x)|u^1(x,T_1-\rho h)|^2d\rho dx+h\int_{0}^{L}\int_{0}^{1}b(x)|u^2(x,T_1-\rho h)|^2d\rho dx\\
%	\leq & 2E_{u^1}(T_1)+\int_{0}^{L}|u^2(x,T_1)|^2dx+h\|b\|_\infty \int_{0}^{L}\int_{0}^{1}|u^2(x,T_1-\rho h)|^2d\rho dx\\
%	\leq & \eta E_{u^1}(0)+\|(u^2(T_1,u^2(\cdot,T_1-h(\cdot))))\|_{\mathcal{H}}^{2}\\
	\leq&  \eta E_{u}(T_1)+e^{(3\xi+1)T_1}\|vv_x\|_{L^1(0,T_1,L^2(0,L)}^{2}\\
	\leq&  \eta E_{u}(T_1)+e^{(3\xi+1)T_1}2T_1^{\frac{1}{2}}\|v\|_{\mathcal{B}}^{4}\\
	\leq&  \eta E_{u}(T_1)+e^{(3\xi+1)T_1}2T_1^{\frac{1}{2}}C_3^2(a,b,h,L)(1+\sqrt{T_1}e^{4\|b\|_\infty T_1}+e^{2\|b\|_{\infty}T_1}+2T_1e^{2\|b\|_\infty T_1})^2E_v^2(0)\\
%	=&  \eta E_{u}(T_1)+e^{(3\xi+1)T_1}2T_1^{\frac{1}{2}}C_3^2(a,b,h,L)(1+\sqrt{T_1}e^{4\|b\|_\infty T_1}+e^{2\|b\|_{\infty}T_1}+2T_1e^{2\|b\|_\infty T_1})^2E_u^2(T_1)\\
%	\leq&  \eta (\eta+\varepsilon)E_{u}(0)\\&+e^{(3\xi+1)T_1}2T_1^{\frac{1}{2}}C_3^2(a,b,h,L)(1+\sqrt{T_1}e^{4\|b\|_\infty T_1}+e^{2\|b\|_{\infty}T_1}+2T_1e^{2\|b\|_\infty T_1})^2(\eta+\varepsilon)^2E_u^2(0)\\
%	\leq&   (\eta+\varepsilon)E_{u}(0)(\eta+e^{(3\xi+1)T_1}2T_1^{\frac{1}{2}}C_3^2(a,b,h,L)(1+\sqrt{T_1}e^{4\|b\|_\infty T_1}+e^{2\|b\|_{\infty}T_1}+2T_1e^{2\|b\|_\infty T_1})r)\\
	\leq&   (\eta+\varepsilon)E_{u}(0)(\eta+e^{(3\xi+1)T_1}2T_1^{\frac{1}{2}}C_3^2(a,b,h,L)(1+\sqrt{T_1}e^{4\|b\|_\infty T_1}+e^{2\|b\|_{\infty}T_1}+2T_1e^{2\|b\|_\infty T_1})r)\\
	\leq& (\eta +\varepsilon)^2 E_{u}(0).
\end{align*}
From now on the proof follows the same steps as was done in Proposition \ref{P8J},  so Theorem \ref{T1.2J} is achieved.  \qed

\section{Asymptotic behavior of $\mu_i-$ system}\label{Sec4}
Let us return to study the behavior of the solution of $\mu_i-$system.  The task of this section is to prove the exponential stability for the solution of \eqref{J1}.

\subsection{Proof of Theorem \ref{P6J}\label{ss3.1}} We prove the local stability result which is based on the appropriate choice of Lyapunov functional.  We start proving that the energy,  associated to the solutions of \eqref{J1},  when \eqref{cdelay} is verified, decays exponentially.  To do it let us  consider the following Lyapunov functional
\begin{equation}\label{J26}
V(t)=E(t)+\alpha V_1(t)+\beta V_2(t),
\end{equation}
where $\alpha$ and $\beta$ are positive constants that will be fixed small enough later on and $E(t)$ is the energy defined by \eqref{J6}. Here,  $V_1$ and $V_2$ are defined by
\begin{equation}\label{J27}
V_1(t)=\int_{0}^{L}xu^2(x,t)dx
\end{equation}
and
\begin{equation}\label{J28}
V_2(t)=\frac{\xi}{2}\int_{0}^{L}\int_{0}^{1}(1-\rho)a(x)u^2(x,t-\rho h) d \rho dx,
\end{equation}
respectively.  It is clear that the two functional $E$ and $V$ are equivalent in the sense that
\begin{equation}\label{J29}
E(t)\leq V(t) \leq \left(1+\max\left\{2\alpha L, \beta\right\}\right)E(t).
\end{equation}
%Taking into account, the following result gives the asymptotic behavior of the energy of the system \eqref{J1}. 
%\begin{proposition}\label{P6J}
%Let $L>0$, assume that $a \in L^\infty(\Omega)$, \eqref{cdelay} holds and $L<\pi \sqrt{3}$. Then, there exists $0<r<\frac{9\pi^2-3L^2}{2L^{\frac{3}{2}}\pi^2}$ such that for every $(u_0,z_0(\cdot,-h(\cdot))) \in \mathcal{H}$ satisfying $\|(u_0,z_0(\cdot,-h(\cdot)))\|_{\mathcal{H}} \leq r,$ the energy of system \eqref{J1}, denoted by $E$ and defined by \eqref{J6}, decay exponentially. More precisely, there exist two positive constants $\gamma$ and $\kappa$ such that
%\begin{equation*}
%	E(t)\leq \kappa E(0) e^{-2\gamma t} \hbox{ for all }  t>0.
%\end{equation*}
%Here, 
%\begin{align*}
%\gamma \leq & \min\left\{\frac{9\pi^2-3L^2-2L^{\frac{3}{2}} r\pi^2}{3L^2(1+2L\alpha)}\alpha,\frac{\beta \xi}{2h(\xi \beta+\xi)} \right\},\\
%	\kappa = & \left(1+\max\{2\alpha L,\beta\}\right),
%\end{align*}
%with $\alpha$ and $\beta$ positive constants such that
%\begin{align*}
%\alpha<& \min\left\{\frac{1}{2L\mu_1+L\mu_2}\left(\mu_1-\frac{\xi}{2h}-\frac{\mu_2}{2}-\frac{\beta \xi}{2h}\right),\frac{1}{L\mu_2}\left(\frac{\xi}{2h}-\frac{\mu_2}{2}\right) \right\},\\
%\beta<& \frac{2h}{\xi} \left(\mu_1-\frac{\xi}{2h}-\frac{\mu_2}{2}\right).
%\end{align*}
%\end{proposition}
%
%\begin{proof}
Now,  let $u$ be a solution of \eqref{J1} with $(u_0,z_0(\cdot,-h(\cdot))) \in \mathscr{D}(\mathcal{A})$ satisfying $\|(u_0,z_0(\cdot,-h(\cdot)))\|_{\mathcal{H}} \leq r$.  Differentiating \eqref{J27}, using the equation \eqref{J1} and integrating by parts, we obtain that
\begin{equation}\label{J32}
\begin{split}
	\frac{d}{dt}V_1(t)
	%=&\int_{0}^{L}2xu(x,t)u_t(x,t)dx \\
	%=&-2\int_{0}^{L}xu(x,t)(u_x+u_{xxx}+a(x)(\mu_1u+\mu_2u(x,t-h))+uu_x-u_{5x})dx \\
%	=& -2 \int_{0}^{L}xu(x,t)u_x(x,t)-2\int_{0}^{L}xu(x,t)u_{xxx}(x,t)dx
%	\nonumber \\
%	&-2\int_{0}^{L}xu(x,t)a(x)\mu_1u(x,t)dx-2\int_{0}^{L}xa(x)\mu_2u(x,t-h)u(x,t)dx\nonumber \\
%	&-\int_{0}^{L}u^2(x,t)u_x(x,t)xdx+2\int_{0}^{L}xu(x,t)u_{5x}(x,t)dx\nonumber \\
%	=& -2 \left(-\frac{1}{2}\right)\int_{0}^{L}u^2(x,t)dx-2 \left(\frac{3}{2}\right)\int_{0}^{L}u_x^2(x,t)dx\nonumber \\
%	&-2\int_{0}^{L}xu(x,t)a(x)\mu_1u(x,t)dx-2\int_{0}^{L}xa(x)\mu_2u(x,t-h)u(x,t)dx\nonumber \\
%	&+\frac{2}{3}\int_{0}^{L}u^3(x,t)dx+2\left(-\frac{5}{2}\right)\int_{0}^{L}u_{xx}^{2}dx\nonumber \\
	=& \int_{0}^{L}u^2(x,t)dx-3\int_{0}^{L}u_{x}^{2}(x,t)dx \\
	&-2\int_{0}^{L}xu(x,t)a(x)\mu_1u(x,t)dx-2\int_{0}^{L}xa(x)\mu_2u(x,t-h)u(x,t)dx \\
	&+\frac{2}{3}\int_{0}^{L}u^3(x,t)dx-5\int_{0}^{L}u_{xx}^{2}(x,t)dx.
	\end{split}
\end{equation}
Moreover, differentiating \eqref{J28} and using integration by parts, we have
\begin{equation}\label{J33}
\begin{split}
	\frac{d}{dt}V_2(t)=& \ \xi \int_{0}^{L}\int_{0}^{1} (1-\rho)a(x)u(x,t-\rho h)u_t(x,t-\rho h)d\rho dx  \\
%	=&-\frac{\xi}{h}\int_{0}^{L}\int_{0}^{1}(1-\rho)a(x)u(x,t-\rho h)u_\rho(x,t-\rho h)d\rho dx\nonumber  \\
%	=&-\frac{\xi}{h}\int_{0}^{L}a(x)\int_{0}^{1}(1-\rho)u(x,t-\rho h)u_\rho(x,t-\rho h)d\rho dx\nonumber  \\
%	=& -\frac{\xi}{2h}\int_{0}^{L}a(x) \left[ -u^2(x,t)+\int_{0}^{1}u^2(x,t-\rho h) d\rho\right]dx \\
	=&\ \frac{\xi}{2h}\int_{0}^{L}a(x)u^2(x,t)dx-\frac{\xi}{2h}\int_{0}^{L}\int_{0}^{1}a(x)u^2(x,t-\rho h)d\rho dx,
	\end{split}
\end{equation}
since
%\begin{align*}
%	\int_{0}^{1} (1-\rho)u(x,t-\rho h)u_\rho(x,t-\rho h)d\rho 
%%=	& (1-\rho)u^2(x,t-\rho h)|_{0}^{1}-\int_{0}^{1} u(x,t-\rho h)((1-\rho)u(x,t-\rho h))_\rho d\rho\\
%=& -u^2(x,t)-\int_{0}^{1}u(x,t-\rho h)(-u(x,t-\rho h))d\rho\\
%	&-\int_{0}^{1}u(x,t-\rho h)(1-\rho)u_\rho(x,t-\rho u)d\rho
%\end{align*}
%which implies that
\begin{equation*}
	2\int_{0}^{1} (1-\rho)u(x,t-\rho h)u_\rho(x,t-\rho h)d\rho = -u^2(x,t)+\int_{0}^{1}u^2(x,t-\rho h) d\rho.
\end{equation*}
An argument analogous to the one made in Proposition \ref{P5JL} yields that
\begin{equation}\label{J25}
\begin{split}
	E_u'(t)\leq& -\frac{1}{2}u_{xx}^2(0)+\left(-\mu_1+\frac{\xi}{2h}+\frac{\mu_2}{2}\right)\int_{0}^{L}a(x)u^2(x)dx\\&+\left(\frac{\mu_2}{2}-\frac{\xi}{2h}\right)\int_{0}^{L}a(x)u^2(x,t-h)dx,
	\end{split}
\end{equation}
and consequently,
\begin{equation}\label{J24}
	E_u'(t)\leq-C_0\left[u_{xx}^2(0)+\int_{0}^{L}a(x)u^2(x)dx+\int_{0}^{L}a(x)u^2(x,t-h)dx\right]
\end{equation}
where $C_0>0$ is given by
\begin{equation*}
	C_0=\min\left\{\frac{1}{2},\mu_1-\frac{\xi}{2h}-\frac{\mu_2}{2},-\frac{\mu_2}{2}+\frac{\xi}{2h}\right\}        
\end{equation*}
for all solutions of system \eqref{J1}.  Thus,  from \eqref{J25}, \eqref{J26}, \eqref{J32}, \eqref{J33} and Cauchy-Schwarz inequality, we have for any $\gamma>0$,
\begin{align*}
	V'(t)+2\gamma V(t) 
	%=& E'(t)+\alpha V_1'(t)+\beta V_2'(t)+2\gamma E(t)+2 \gamma \alpha V_1(t)+2 \gamma \beta V_2(t)\\
%	\leq & -\frac{1}{2}u_{xx}^2(0)+\left(-\mu_1+\frac{\xi}{2h}+\frac{\mu_2}{2}\right)\int_{0}^{L}a(x)u^2(x)dx\\
%	&+\left(\frac{\mu_2}{2}-\frac{\xi}{2h}\right)\int_{0}^{L}a(x)u^2(x,t-h)dx+\alpha\int_{0}^{L}u^2(x,t)dx-3\alpha\int_{0}^{L}u_{x}^{2}(x,t)dx\nonumber \\
%	&-2\alpha\int_{0}^{L}xu(x,t)a(x)\mu_1u(x,t)dx-2\alpha\int_{0}^{L}xa(x)\mu_2u(x,t-h)u(x,t)dx\nonumber \\
%	&+\frac{2}{3}\alpha\int_{0}^{L}u^3(x,t)dx-5\alpha\int_{0}^{L}u_{xx}^{2}(x,t)dx\\
%	&+\frac{\beta\xi}{2h}\int_{0}^{L}a(x)u^2(x,t)dx-\frac{\beta \xi}{2h}\int_{0}^{L}\int_{0}^{1}a(x)u^2(x,t-\rho h)d\rho dx\\
%	&+\gamma \int_{0}^{L}u^2(x,t)dx+\gamma \xi \int_{0}^{L}\int_{0}^{1}a(x)u^2(x,t-\rho h)d\rho dx\\
%	&+2\gamma \alpha \int_{0}^{L}xu^2(x,t)dx+\gamma\xi\beta\int_{0}^{L}\int_{0}^{1}(1-\rho)a(x)u^2(x,t-\rho h)d\rho dx\\
	\leq &-\frac{1}{2}u_{xx}^{2}(0)+\left(-\mu_1+\frac{\xi}{2h}+\frac{\mu_2}{2}+2\alpha L\mu_1+\alpha L\mu_2+\frac{\beta \xi}{2h} \right) \int_{0}^{L}a(x)u^2(x,t)dx\\
	&+\left(\frac{\mu_2}{2}-\frac{\xi}{2h}+\alpha L \mu_2-\frac{\beta \xi}{2h}+\gamma \xi+\gamma \xi \beta\right)\int_{0}^{L}\int_{0}^{1}a(x)u^2(x,t-\rho h)d\rho dx\\
	&+(\alpha +\gamma +2\gamma L \alpha)\int_{0}^{L}u^2(x,t)dx-3\alpha\int_{0}^{L}u_{x}^{2}(x,t)dx\\
	&+\frac{2\alpha}{3}\int_{0}^{L}u^3(x,t)dx-5\alpha\int_{0}^{L}u_{xx}^{2}(x,t)dx.
\end{align*}
Thanks to the Poincaré's inequality, we get
% $$\|u\|_{L^2(0,L)}^{2}\leq \frac{L^2}{\pi^2}\|u_x\|_{L^2(0,L)}^2,$$ for $u \in H_{0}^{1}(0,L)$, we get that
\begin{align*}
V'(t)+2\gamma V(t) \leq & -\frac{1}{2}u_{xx}^{2}(0)+\left(-\mu_1+\frac{\xi}{2h}+\frac{\mu_2}{2}+2\alpha L\mu_1+\alpha L\mu_2+\frac{\beta \xi}{2h} \right) \int_{0}^{L}a(x)u^2(x,t)dx\\
&+\left(\frac{\mu_2}{2}-\frac{\xi}{2h}+\alpha L \mu_2 \right) \int_{0}^{L}\int_{0}^{1}a(x)u^2(x,t-\rho h)d\rho dx\\
&+\left(\frac{L^2}{\pi^2}(\alpha+\gamma +2\gamma L\alpha)-3\alpha \right)\int_{0}^{L}u_{x}^{2}(x,t)dx+\frac{2\alpha}{3}\int_{0}^{L}u^3(x,t)dx\\
&-5\alpha\int_{0}^{L}u_{xx}^{2}(x,t)dx+\left(\gamma \xi \beta+\gamma \xi -\frac{\beta \xi}{2h}\right)\int_{0}^{L}\int_{0}^{1}a(x)u^2(x,t-\rho h)d\rho dx.
\end{align*}
Similar argument as in \eqref{uuu}, Cauchy-Schwarz inequality, \eqref{J24} and since $H_{0}^{1}(0,L) \hookrightarrow C([0,L])$,  yields that
\begin{align*}
	\int_{0}^{L}u^3(x,t)dx 
	%\leq & \|u(\cdot,t)\|_{L^\infty(0,L)}^{2} \int_{0}^{L}|u(x,t)|dx\\
	%\leq & L\sqrt{L}\|u_x(\cdot,t)\|_{L^2(0,L)}^{2}\|u(\cdot,t)\|_{L^2(0,L)}^{2}\\
	%\leq &L\sqrt{L}\|u_x(\cdot,t)\|_{L^2(0,L)}^{2}\|(u_0,z_0(\cdot,-h(\cdot)))\|_{\mathcal{H}}\\
	\leq &L^{\frac{3}{2}}r \|u_x(\cdot,t)\|_{L^2(0,L)}^{2}.
\end{align*}
Therefore,
\begin{align*}
V'(t)+2\gamma V(t) \leq & \left(\frac{L^2}{\pi^2}(\gamma(1+2L\alpha)+\alpha)-3\alpha+\frac{2\alpha L^\frac{3}{2}r}{3} \right)\int_{0}^{L}u_{x}^{2}(x,t)dx\\
&+\left(\gamma \xi \beta+\gamma \xi -\frac{\beta \xi}{2h}\right)\int_{0}^{L}\int_{0}^{1}a(x)u^2(x,t-\rho h)d\rho dx.
\end{align*}
Consequently, taking $\alpha, \beta, \gamma$ and $r$  as in the statement of proposition we have that
\begin{equation}\label{Vn}
	V'(t)+2\gamma V(t) \leq 0.
\end{equation}
Finally, from \eqref{J29} and \eqref{Vn}, we obtain
\begin{equation*}
	E(t) \leq V(t) \leq e^{-2\gamma t }V(0) \leq \left(1+\max\left\{2\alpha L, \beta\right\}\right)e^{-2\gamma t}E(0), \hbox{ for all } t>0.
\end{equation*}
By the density of $\mathscr{D}(\mathcal{A})$ in $\mathcal{H}$ the result extend to arbitraty $(u_0,z_{0}(\cdot,-h(\cdot))) \in \mathcal{H}$. \qed

\subsection{Proof of Theorem \ref{T1.1J}}
%In this section we are interested in proving the second main result of the article, namely, Theorem \ref{T1.1J}. The proof of this theorem is based to prove an observability inequality, for the nonlinear delayed fifth order KdV equation.
%which one is proved using a \linebreak contradiction argument.  Being precise,  we prove the semi-global stability result using compactness-uniqueness approach. 

%\vspace{0.2cm}

Now let us remove the hypotheses of the initial data being small in Theorem \eqref{P6J}.  To do it, let $u$ be the solution of \eqref{J1} with $(u_0,z_0(\cdot,-h(\cdot))) \in \mathscr{D}(\mathcal{A})$. Integrating \eqref{J24} between $0$ and $T>h$, we have
\begin{equation*}
	E(T)-E(0) \leq -C_0 \left(\int_{0}^{T} u_{xx}^2(0,t)dt+\int_{0}^{T}\int_{0}^{L}a(x)u^2(x)dxdt+\int_{0}^{T}\int_{0}^{L}a(x)u^2(x,t-h)dxdt \right),
\end{equation*}
where \begin{equation*}
	C_0=\min\left\{\frac{1}{2},\mu_1-\frac{\xi}{2h}-\frac{\mu_2}{2},-\frac{\mu_2}{2}+\frac{\xi}{2h}\right\},
\end{equation*}
which is equivalent to
\begin{equation}\label{J34}
\int_{0}^{T} u_{xx}^2(0,t)dt+\int_{0}^{T}\int_{0}^{L}a(x)u^2(x)dxdt+\int_{0}^{T}\int_{0}^{L}a(x)u^2(x,t-h)dxdt \leq \frac{1}{C_0}(E(0)-E(T)).
\end{equation}

Observe that the prove of Theorem \ref{T1.1J} is a direct consequence of the following observability inequality
\begin{equation}\label{J35}
	E(0) \leq C \left(\int_{0}^{T} u_{xx}^2(0,t)dt+\int_{0}^{T}\int_{0}^{L}a(x)u^2(x)dxdt+\int_{0}^{T}\int_{0}^{L}a(x)u^2(x,t-h)dxdt \right),
\end{equation}
for the solutions of the nonlinear system \eqref{J1}.  

In fact,  suppose that \eqref{J35} is verified and, as the energy is non-increasing, we have, thanks to \eqref{J34}, that
\begin{align*}
	E(T)
	% \leq & E(0) \\
	\leq & C \left(\int_{0}^{T} u_{xx}^2(0,t)dt+\int_{0}^{T}\int_{0}^{L}a(x)u^2(x)dxdt+\int_{0}^{T}\int_{0}^{L}a(x)u^2(x,t-h)dxdt \right) \\
	\leq & \frac{C}{C_0}(E(0)-E(T)),
\end{align*}
which implies that 
\begin{equation}\label{J36}
E(T) \leq \gamma E(0), \hbox{ with } \gamma=\frac{\frac{C}{C_0}}{1+\frac{C}{C_0}}<1.
\end{equation}
The same argument used on the interval $[(m-1)T,mT]$ for $m=1,2,\dots$,yields that
\begin{equation*}
	E(mT) \leq \gamma E((m-1)T) \leq \cdots \leq \gamma^m E(0).
\end{equation*}
Thus, we have $$E(mT) \leq e^{-\nu m T}E(0)$$  with 
\begin{equation}\label{nu_a}
\nu=\frac{1}{T}\ln\left(1+\frac{C_0}{C}\right)>0.
\end{equation} 
For an arbitrary positive $t$, there exists $m \in \mathbb{N}^*$ such that $(m-1)T <t\leq mT$, and by the non-increasing property of the energy, we conclude that 
\begin{equation*}
	E(t) \leq E((m-1)T) \leq e^{-\nu(m-1)T}E(0) \leq \frac{1}{\gamma}e^{-\nu t}E(0).
\end{equation*}
By the density of $\mathscr{D}(\mathcal{A})$ in $\mathcal{H}$, we deduce that the exponential decay of the energy $E$ holds for any initial data in $\mathcal{H}$, showing so Theorem \ref{T1.1J}. \qed

\vspace{0.2cm}

Let us now prove the inequality \eqref{J35}.

\begin{proof}[Proof of the observalibity inequality]
% for the nonlinear system \eqref{J1}.  First
%, notice that $$\int_{0}^{L}(T-t)u^2 u_x dx=\frac{(T-t)}{3}u^3|_{0}^{L}=0,$$ so 
First, we can obtain, similarly to \eqref{J13}, the following inequality
\begin{align}\label{J37}
	T\int_{0}^{L} u_0^2(x)dx \leq& \|u\|_{L^2(0,T,L^2(0,L))}^{2}+T \int_{0}^{T}u_{xx}^{2}(0,t)dt+T(2\mu_1+\mu_2)\int_{0}^{T}\int_{0}^{L}a(x)\mu_1u^2(x,t)dxdt \nonumber \\
	&+T\mu_2\int_{0}^{T}\int_{0}^{L}a(x)\mu_2u^2(x,t-h)dxdt
\end{align}
Now, multiplying $\eqref{J9}_4$ by $\xi a(x)z(x,\rho,s)$ and integrating in $(0,L) \times (0,1)$ we have that
\begin{equation}\label{d1}
	\frac{d}{ds}\frac{\xi }{2} \int_{0}^{L}a(x)\int_{0}^{1}(z(x,\rho,s))^2 \, d \rho dx=\frac{\xi}{2h}\int_{0}^{L} a(x)[(z(x,0,s))^2  -(z(x,1,s))^2] \,dx.
\end{equation}
Therefore,
\begin{equation}\label{d2}
\frac{\xi}{2h}\int_{0}^{t}\int_{0}^{L}a(x)(z^2(x,0,s)-z^2(x,1,s))dxds=\frac{\xi}{2}\int_{0}^{L}\int_{0}^{1}a(x)(z^2(x,\rho,t)-z^2(x,\rho,0))d\rho dx.
\end{equation}
From \eqref{d2} we obtain,
\begin{equation}\label{d3}
\begin{split}
\frac{\xi}{2}\int_{0}^{L}\int_{0}^{1}a(x)z^2(x,\rho,0)d\rho dx \leq& \frac{\xi}{2}\int_{0}^{L}\int_{0}^{1}a(x)z^2(x,\rho,t)d\rho dx\\&+\frac{\xi}{2h}\int_{0}^{t}\int_{0}^{L}a(x)z^2(x,1,s)dxds.
\end{split}
\end{equation}
So, integrating \eqref{d3} from $0$ to $T$ yields that
\begin{equation}\label{d4}
\begin{split}
T \frac{\xi}{2}\int_{0}^{L}\int_{0}^{1}a(x)z^2(x,\rho,0)d\rho dx \leq &\frac{\xi}{2}\int_{0}^{T}\int_{0}^{L}\int_{0}^{1}a(x)z^2(x,\rho,t)d\rho dxdt\\&+\frac{T\xi}{2h}\int_{0}^{T}\int_{0}^{L}a(x)z^2(x,1,t)dxdt.
\end{split}
\end{equation}
Noting that $z(x,\rho,t)= u(x,t-\rho h)$ it follows that
\begin{equation}
	\begin{aligned}\label{d5}
		\frac{\xi }{2}\int_{0}^{L}a(x)\int_{0}^{1}(z(x,\rho,0))^2 \, d \rho dx ={}& \frac{\xi }{2}\int_{0}^{L}a(x)\int_{0}^{1}( u(x,-\rho h))^2 \, d \rho dx\\
		= {}& \frac{\xi }{2}\int_{0}^{L}a(x)\int_{0}^{-h}( u(x,s))^2 \,\left(-\frac{1}{h}\right) ds dx\\
%		={}& \frac{\xi }{2 h}\int_{0}^{L}a(x)\int_{-h}^{0}( u(x,s))^2 \, ds dx\\
%		={}& \frac{\xi }{2 h}\int_{0}^{L}a(x)\int_{0}^{h}( u(x,t-h))^2 \, dt dx\\
%		\leq{}& \frac{\xi }{2 h}\int_{0}^{L}a(x)\int_{0}^{T}( u(x,t-h))^2 \, dt dx\\
		\leq{}& \frac{\xi }{2 h}\int_{0}^{L}a(x)\int_{0}^{T}(z(x,1,t))^2 \, dt dx,
	\end{aligned}
\end{equation}
where, in the second equality, we have used the following change of variable $s=-\rho h$. 
From \eqref{d1} and \eqref{d5} we also have
\begin{equation}\label{d6}
\begin{split}
	\frac{\xi }{2} \int_{0}^{L}a(x)\int_{0}^{1}(z(x,\rho,t))^2 \, d \rho dx
%	=&\ \frac{\xi }{2} \int_{0}^{L}a(x)\int_{0}^{1}(z(x,\rho,0))^2 \, d \rho dx\\&+\frac{\xi}{2h}\int_{0}^{t}\int_{0}^{L} a(x)(z(x,0,s))^2 \, dxds\\& -\frac{\xi}{2h}\int_{0}^{t}\int_{0}^{L}a(x)(z(x,1,s))^2 \,dxds \\
	\leq&\  \frac{\xi }{2 h}\int_{0}^{L}a(x)\int_{0}^{T}(z(x,1,t))^2 \, dt dx\\&+\frac{\xi}{2h}\int_{0}^{T} \int_{0}^{L} a(x)(z(x,0,t))^2\,dxdt,
\end{split}
\end{equation}
Hence, from \eqref{d4} and \eqref{d6} we obtain
\begin{equation}\label{d7}
\begin{split}
T \frac{\xi}{2}\int_{0}^{L}\int_{0}^{1}a(x)z^2(x,\rho,0)d\rho dx 
%\leq& \ \frac{\xi }{2 h}\int_{0}^{L}a(x)\int_{0}^{T}(z(x,1,t))^2 \, dt dx\\&+\frac{\xi}{2h}\int_{0}^{T} \int_{0}^{L} a(x)(z(x,0,t))^2\,dxdt \\
%& +\frac{T\xi}{2h}\int_{0}^{T}\int_{0}^{L}a(x)z^2(x,1,t)dxdt  \\
\leq & \left(\frac{\xi }{2 h}+\frac{T\xi }{2 h} \right) \int_{0}^{T}\int_{0}^{L}a(x)u^2(x,t-h)dxdt\\&+\frac{\xi }{2 h}\int_{0}^{T}\int_{0}^{L}a(x)u^2(x,t)dxdt.
\end{split}
\end{equation}

Gathering \eqref{d7} with \eqref{J37}, we see that in order to prove the observability inequality \eqref{J35} it is sufficient to prove that for any $T,R>0$ there exists $K:=K(R,T)>0$ such that
\begin{equation}\label{J38}
\begin{split}
\|u\|_{L^2(0,T,L^2(0,L))}^{2} \leq K \left(\int_{0}^{T} u_{xx}^2(0,t)dt+\int_{0}^{T}\int_{0}^{L}a(x)u^2(x)dxdt+\int_{0}^{T}\int_{0}^{L}a(x)u^2(x,t-h)dxdt \right)
\end{split}
\end{equation}
holds for all solutions of the nonlinear system \eqref{J1} with $\|(u_0,z_0(\cdot,-h(\cdot)))\|_{\mathcal{H}} \leq R$.  

Let us now argue by contradiction.  If \eqref{J38} does not hold, there exists a sequence $\{u^n\}_{n\in\mathbb{N}} \subset \mathcal{B}$ of solutions to system \eqref{J1} with $\|(u_0^n,z_0^n(\cdot,-h(\cdot)))\|_{\mathcal{H}} \leq R$ such that

\begin{equation*}
	\lim_{n \rightarrow \infty} \frac{\|u^n\|_{L^2(0,T,L^2(0,L))}^{2}}{\|u_{xx}^{n}(0,\cdot)\|_{L^2(0,T)}^{2}+\int_{0}^{T}\int_{0}^{L}a(x)|u^n(x,t)|^2dxdt+\int_{0}^{T}\int_{0}^{L}a(x)|u^n(x,t-h)|^2dxdt}=\infty.
\end{equation*}
We define $\lambda_n=\|u^n\|_{L^2(0,T,L^2(0,L))}$ and $v^{n} =\frac{u^n}{\lambda_n}$. Then, $v_n$ satisfies
\begin{equation}
	\begin{cases}\label{J39}
		v^{n}_{t}(x,t)+v^{n}_{x}(x,t)+v^{n}_{xxx}(x,t)-v^{n}_{xxxxx}(x,t)+\lambda_nv^{n}v^{n}_{x}(x,t)\\ \quad  \quad  \quad  \quad  \quad  \quad  \quad  \quad  \quad  \quad  +a\left(x\right)(\mu_1 v^{n}(x,t)+\mu_2v^{n}(x,t-h))=0 & x \in (0,L),~ t>0,\\
		v^{n}\left(  0,t\right)  =v^{n}\left(  L,t\right)  =v^{n}_{x}\left(  0,t\right)
		=v^{n}_{x}\left(  L,t\right)  =v^{n}_{xx}\left(  L,t\right)  =0 & t>0,\\
		v^{n}\left(  x,0\right)  =\frac{u^{n}_{0}}{\lambda_n}\left(  x\right)  & x \in (0,L),\\
		v^{n}(x,t)=\frac{z_{0}^{n}}{\lambda_n}(x,t) & x \in (0,L),~t \in (-h,0),
	\end{cases}
\end{equation}
\begin{equation}\label{J40}
\|v^{n}\|_{L^2(0,T,L^2(0,L))}=1
\end{equation}
and
\begin{equation}\label{J41}
\|v^{n}_{xx}(0,\cdot)\|_{L^2(0,T)}^{2}+\int_{0}^{T}\int_{0}^{L}a(x)\left(|v^n(x,t)|^2+|v^n(x,t-h)|^2\right)dxdt \rightarrow 0 \hbox{ as } n \rightarrow \infty.
\end{equation}

\vspace{0.2cm}
\noindent\textbf{Claim 1.} \textit{$\{v^n(\cdot,0)\}$ is bounded in $L^2(0,L)$.}
\vspace{0.2cm}

Indeed,  since
\begin{equation*}
	\int_{0}^{T}\int_{0}^{L}(T-t)(v^n)^2v_{x}^{2}dxdt=0.
\end{equation*}
we have, as for the linear case, that
\begin{equation}\label{J41.1}
\begin{split}
	\|v^n(x,0)\|_{L^2(0,L)}^{2} \leq & \frac{1}{T} \|v^n\|_{L^2(0,T,L^2(0,L))}^{2}+\|v_{xx}^{n}(0,\cdot)\|_{L^2(0,T)}^{2}\\
	&+ (2\mu_1+\mu_2)\int_{0}^{L}\int_{0}^{T}a(x)|v^n(x,t)|^2dxdt\\&+\int_{0}^{L}\int_{0}^{T}a(x)|v^n(x,t-h)|^2dxdt.
	\end{split}
\end{equation}
Gathering \eqref{J40}, \eqref{J41} and \eqref{J41.1} the Claim 1 follows.

\vspace{0.2cm}
\noindent\textbf{Claim 2.} \textit{$\{\sqrt{a(x)}v^n(\cdot,-h(\cdot))\}$ is bounded in $L^2((0,L)\times(0,1))$ and $\{\lambda_n\}$ is bounded in $\mathbb{R}$.}
\vspace{0.2cm}

In fact, as we have that
\begin{equation*}
\begin{split}
T \frac{\xi}{2}\int_{0}^{L}\int_{0}^{1}a(x)|z_{0}^{n}(x,\rho,0)|^2d\rho dx \leq& \left(\frac{\xi }{2 h}+\frac{T\xi }{2 h} \right) \int_{0}^{T}\int_{0}^{L}a(x)|u^n(x,t-h)|^2dxdt\\&+\frac{\xi }{2 h}\int_{0}^{T}\int_{0}^{L}a(x)|u^n(x,t)|dxdt	
	\end{split}
\end{equation*}
it follows that
\begin{equation*}
\begin{split}
T \frac{\xi}{2}\int_{0}^{L}\int_{0}^{1}a(x)\frac{1}{\lambda_n^2}|z_{0}^{n}(x,\rho,0)|^2d\rho dx \leq& \left(\frac{\xi }{2 h}+\frac{T\xi }{2 h} \right) \int_{0}^{T}\int_{0}^{L}a(x)|v^n(x,t-h)|^2dxdt\\&+\frac{\xi }{2 h}\int_{0}^{T}\int_{0}^{L}a(x)|v^n(x,t)|dxdt,	
	\end{split}
\end{equation*}
and consequently,  $\{\sqrt{a(x)}v^n(\cdot,-h(\cdot))\}$ in bounded.  Moreover, thanks to \eqref{J24} we see that
\begin{equation*}
	\lambda_{n}^{2}=\|u^n\|_{L^2(0,T,L^2(0,L))}^{2} \leq T \|(u_{0}^{n}(\cdot),z_{0}^{n}(\cdot,-h(\cdot)))\|_{\mathcal{H}}^{2} \leq TR^2,
\end{equation*}
that is,  $\{\lambda_n\}$ is bounded, and so, the Claim 2 holds.

\vspace{0.2cm}
\noindent\textbf{Claim 3.} \textit{ $\{v^n\}$ is bounded in $L^2(0,T,H^2(0,L))$.}
\vspace{0.2cm}

This follows noting first that, as in the proof of Proposition \ref{P1J}, we have that
\begin{equation*}
\begin{split}
	\frac{3}{2}\int_{0}^{T}\int_{0}^{L} |v^{n}_{x}(x,t)|^2dx+\frac{5}{2}\int_{0}^{T}\int_{0}^{L}|v^{n}_{xx}(x,t)|^{2}dx &= \frac{1}{2}\int_{0}^{L}x((v_{0}^{n})^{2}(x)-(v^n)^2(x,T))dx\\ &+\frac{1}{2}\int_{0}^{T}\int_{0}^{L}|v^n(x,t)|^2 dxdt\\
	&-\int_{0}^{T}\int_{0}^{L}xa(x)\mu_1|v^n(x,t)|^2 dxdt \\
	&-\int_{0}^{T} \int_{0}^{L}xa(x)\mu_2v^n(x,t-h)v^n(x,t)dxdt\\
	&-\int_{0}^{T}\int_{0}^{L}x\lambda_n v^nv_{x}^{n}v^ndxdt.
	\end{split}
\end{equation*}
Now, observe that 
\begin{align*}
-\int_{0}^{T}\int_{0}^{L}x\lambda_n v^nv_{x}^{n}v^ndxdt 
%=& \frac{1}{3}\int_{0}^{T}\int_{0}^{L}\lambda_n (v^n)^3dxdt\\
\leq & \sqrt{LT} \lambda_n\|v^n\|_{L^\infty(0,T,L^2(0,L))}^{2}\|v^n\|_{L^2(0,T,H^1(0,L))}\\
\leq &\frac{\sqrt{LT}\lambda_{n} \xi}{2}\|v^n\|_{L^2(0,T,H^1(0,L))}\int_{0}^{L} |v_{0}^{n}(x)|^2dx\\
&+\frac{\sqrt{LT}\lambda_{n} \xi}{2}\|v^n\|_{L^2(0,T,H^1(0,L))}\int_{0}^{L}\int_{0}^{1}a(x)|v^n(x,-\rho h)|^2d\rho dx.
\end{align*}
Thus,
%\begin{equation*}
%	\begin{gathered}
%		\frac{3}{2}\int_{0}^{T}\int_{0}^{L} |v^{n}_{x}(x,t)|^2dx+\frac{5}{2}\int_{0}^{T}\int_{0}^{L}|v^{n}_{xx}(x,t)|^{2}dx  \leq  \frac{L}{2}\|v_{0}^{n}\|_{L^2(0,L)}^{2}+\frac{1}{2}\int_{0}^{T}\int_{0}^{L}|v^n(x,t)|^2 dxdt\\
%		+L\mu_1\int_{0}^{T}\int_{0}^{L}a(x)|v^n(x,t)|^2dxdt
%		+\frac{L\mu_2}{2}\int_{0}^{T}\int_{0}^{L}a(x)|v^n(x,t)|^2dxdt\\
%		+\frac{L\mu_2}{2}\int_{0}^{T}\int_{0}^{L}a(x)|v^n(x,t-h)|^2dxdt	+\sqrt{LT}\lambda_n\|v^n\|_{L^2(0,T,H^1(0,L))}^{2} \varepsilon\\
%		+\sqrt{LT}\lambda_{n} \left(\frac{\xi}{2}\int_{0}^{L} |v_{0}^{n}(x)|^2dx+\frac{\xi}{2}\int_{0}^{L}\int_{0}^{1}a(x)|v^n(x,-\rho h)|^2d\rho dx\right)^2\frac{1}{4\varepsilon}		
%	\end{gathered}	
%\end{equation*}
for $\varepsilon>0$ small enough, we have, putting the two previous inequalities together, that
\begin{equation*}
	\begin{split}
	\|v^n\|_{L^2(0,T,H^2(0,L))}^{2} \leq& L\|v_{0}^{n}\|_{L^2(0,L)}^{2}+\|v^n\|_{L^2(0,T,L^2(0,L))}^{2}\\
&+ L(2\mu_1+\mu_2)\int_{0}^{T}\int_{0}^{L}a(x)|v^n(x,t)|^2dxdt\\&+L\mu_2\int_{0}^{T}\int_{0}^{L}a(x)|v^n(x,t-h)|^2dxdt\\
&+\sqrt{LT}\lambda_{n} \left(\frac{\xi}{2}\int_{0}^{L} |v_{0}^{n}(x)|^2dx+\frac{\xi}{2}\int_{0}^{L}\int_{0}^{1}a(x)|v^n(x,-\rho h)|^2d\rho,
		 dx\right)^2\frac{1}{2\varepsilon}
		 	\end{split}	
\end{equation*}
showing the Claim 3. 

\vspace{0.2cm}
\noindent\textbf{Claim 4.} \textit{$\{v^nv_{x}^{n}\}$ is bounded in $L^2(0,T,L^1(0,L))$.}
\vspace{0.2cm}

This claim is a direct consequence of the following inequality
\begin{equation*}
	\|v^nv_{x}^{n}\|_{L^2(0,T,L^1(0,L))}\leq \|v^n\|_{C([0,T],L^2(0,L))}\|v^n\|_{L^2(0,T,H^2(0,L))},
\end{equation*}
where we used the Cauchy-Schwarz inequality. 

Thus, putting together all this results we showed that $$v_{t}^{n}(x,t)=-(v^{n}_{x}(x,t)+v^{n}_{xxx}(x,t)-v^{n}_{xxxxx}(x,t)+\lambda_nv^{n}v^{n}_{x}(x,t)+a\left(x\right)(\mu_1 v^{n}(x,t)+\mu_2v^{n}(x,t-h)))$$ is bounded in $L^2(0,T,H^{-3}(0,L))$ and using the classical compactness results (see e.g.  \cite{Simon}), we obtain that $\{v^n\}$ is relatively compact in $L^2(0,T,L^2(0,L))$. Thus, there exists a subsequence of $\{v^n\}$, still denoted by $\{v^n\}$,  such that 
$$v^n\longrightarrow v,  \quad \text{strongly in}\quad L^2(0,T,L^2(0,L)),$$
verifying $$\|v\|_{L^2(0,T,L^2(0,L))}=1.$$  Furthermore, by weak lower semicontinuity, we have 
\begin{equation*}
	v(x,t)=0 \in \omega \times (0,T) \hbox{ and } v_{xx}(0,t)=0 \hbox{ in } (0,T).
\end{equation*}
Since $\{\lambda_n\}$ is bounded, we can also extract a subsequence, still denoted by $\{\lambda_n\}$ which converges to $\lambda \geq 0$. Consequently, the limit $v$ satisfies
\begin{equation}\label{vlimit}
	\begin{cases}
		v_{t}(x,t)+v_{x}(x,t)+v_{xxx}(x,t)-v_{xxxxx}(x,t)+\lambda v(x,t)v_{x}(x,t)=0 & x \in (0,L),~ t>0,\\
		v\left(  0,t\right)  =v\left(  L,t\right)  =v_{x}\left(  0,t\right)
		=v_{x}\left(  L,t\right)  =v_{xx}\left(  L,t\right)  =0 & t>0,\\
		v(x,t)=0  & x \in \omega, t \in (0,T),\\
		v_{xx}(0,t)=0 & t \in(0,T),\\
		\|v\|_{L^2(0,T,L^2(0,L))}=1.
	\end{cases}
\end{equation}
At this moment we shall divide our proof into two cases: 

\smallskip
\noindent{Case (i):~$\lambda=0$.}
\smallskip

In this case, the system satisfied by $v$ is linear and we can apply the Holmgren's uniqueness theorem to obtain that $v=0$, which contradicts the fact that $\|v\|_{L^2(0,T,L^2(0,L))}=1$.

\smallskip
\noindent{Case (ii):~$\lambda>0$.}
\smallskip

For that case, we need to prove that $v \in L^2(0,T,H^5(0,L))$. In this way, let us consider $u=v_t$. Then, $u$ is solution of the following equation
\begin{equation*}
	\begin{cases}
		u_{t}(x,t)+u_{x}(x,t)+u_{xxx}(x,t)-v_{xxxxx}(x,t)\\ \quad  \quad  \quad  \quad  \quad  \quad  \quad  \quad  \quad  \quad +\lambda u(x,t)v_{x}(x,t)+\lambda v(x,t)u_{x}(x,t)=0 & x \in (0,L),~ t>0,\\
		u\left(  0,t\right)  =u\left(  L,t\right)  =u_{x}\left(  0,t\right)
		=u_{x}\left(  L,t\right)  =u_{xx}\left(  L,t\right)  =0 & t>0,\\
		u(x,t)=0  & x \in \omega, t \in (0,T),\\
		u_{xx}(0,t)=0 & t \in(0,T),\\
		u(x,0)=-v_x(x,0)-v_{xxx}(x,0)-v_{5x}(x,0)-\lambda v(x,0)v_x(x,0) \in H^{-5}(0,L)\\
	\end{cases}
\end{equation*}
%
%\begin{lemma}
%	\label{E3}There exists a constant $C=C\left(  T;\left\Vert u_{0}\right\Vert
%	_{L^{2}\left(  0,L\right)  }\right)  >0$ such that
%	\begin{equation}
%		\left\Vert u_{0}\right\Vert _{L^{2}\left(  0,L\right)  }^{2}\leq C\left\{
%		\int_{0}^{T}u_{xx}^{2}\left(  0,t\right)  dt+\left\Vert v_{0}\right\Vert
%		_{H^{-5}\left(  0,L\right)  }^{2}+1\right\}  . \label{p1}%
%	\end{equation}
%for all solutions of system
%\begin{equation}
%\begin{cases}
%		u_{t}+u_{x}+u_{xxx}-u_{xxxxx}+\lambda \left(  uv\right)  _{x}=0 & x \in (0,L), ~ t \in (0,T),\\
%		u\left(  0,t\right)  =u\left(  L,t\right)  =u_{x}\left(  0,t\right)
%		=u_{x}\left(  L,t\right)  =u_{xx}\left(  L,t\right)  =0 &  		t\in\left(  0,T\right),\\
%		u\left(  x,0\right)  =u_{0}\left(  x\right)  & x\in\left(
%		0,L\right),
%\end{cases} \label{a4}%
%\end{equation}
%where $v$ is a solution 
%\begin{equation*}
%	\begin{cases}
%		v_{t}+v_{x}+v_{xxx}-v_{xxxxx}+\lambda v(x,t)v_{x}(x,t)=0 & x \in (0,L),~ t>0,\\
%		v\left(  0,t\right)  =v\left(  L,t\right)  =v_{x}\left(  0,t\right)
%		=v_{x}\left(  L,t\right)  =v_{xx}\left(  L,t\right)  =0 & t>0,\\
%		\|v(\cdot,0)\|_{L^2(0,L)}\leq R\\
%		\|v\|_{L^2(0,T,L^2(0,L))}=1.
%	\end{cases}
%\end{equation*}
% with $v_{0}\in L^{2}\left(  0,L\right)
%$ and
%\begin{equation}
%	u_{0}\left(  x\right)  =v_{t}\left(  x,0\right)  =-v_{0,x}-v_{0,xxx}%
%	+v_{0,xxxxx}-v_{0}v_{0,x}\in H^{-5}\left(
%	0,L\right)  \text{.} \label{fag6}%
%\end{equation}
%\end{lemma}
Thus,  $u(\cdot,0) \in L^2(0,L)$ and so $u=v_t \in \mathcal{B}$. It follows from \eqref{vlimit}
that $u_{xxxxx}\in L^{2}\left( (0,L) \times (0,T)\right)  $. Therefore,
\[
u\in L^{2}(  0,T;H^{5}\left(  0,L\right)) \cap H^{1}(
0,T;H^{2}\left(  0,L\right) )
\]
which is sufficiently to the unique continuation principle from  \cite{Saut}  be applied. This gives $u\equiv0$ in $(0,L) \times (0,T)$ which completes the proof. 
\end{proof}

\appendix 

\section{Study of an auxiliary system}\label{ApA}
The goal of this appendix is to treat the system \eqref{J42} linearized around $0$.
\begin{equation}
	\begin{cases}\label{J44}
		u_{t}(x,t)+u_{x}(x,t)+u_{xxx}(x,t)-u_{xxxxx}(x,t)+a\left(x\right)u(x,t)\\ \quad  \quad  \quad  \quad  \quad  \quad  \quad  \quad  \quad  \quad \quad+b(x)u(x,t-h)+\xi b(x)u(x,t)=0 & x \in (0,L),~ t>0,\\
		u\left(  0,t\right)  =u\left(  L,t\right)  =u_{x}\left(  0,t\right)
		=u_{x}\left(  L,t\right)  =u_{xx}\left(  L,t\right)  =0 & t>0,\\
		u\left(  x,0\right)  =u_{0}\left(  x\right)  & x \in (0,L),\\
		u(x,t)=z_0(x,t) & x \in (0,L),~t \in (-h,0),
	\end{cases}
\end{equation}
The results contained herein are essential to prove one of the main results of this work.

\subsection{Well-posedness of the auxiliary system}
We start showing that system \eqref{J44} is well-posed. As in Section \ref{Sec3}, setting $z(x,\rho,t)=u(x,t-\rho h)$ for any $x \in (0,L)$, $\rho \in (0,1)$ and $t>0$, $(u(\cdot,t),z(\cdot,\cdot,t))$ satisfies the system
\begin{equation}
	\begin{cases}\label{J9.1}
		u_{t}(x,t)+u_{x}(x,t)+u_{xxx}(x,t)-u_{xxxxx}(x,t)+a\left(x\right)u(x,t)\\ \quad  \quad  \quad  \quad  \quad  \quad  \quad  \quad  \quad  \quad \quad+b(x)z(1)+\xi b(x)u(x,t)=0 & x \in (0,L),~ t>0,\\
		u\left(  0,t\right)  =u\left(  L,t\right)  =u_{x}\left(  0,t\right)
		=u_{x}\left(  L,t\right)  =u_{xx}\left(  L,t\right)  =0 & t>0,\\
		u\left(  x,0\right)  =u_{0}\left(  x\right)  & x \in (0,L),\\
		h z_t(x,\rho,t)+z_\rho(x,\rho,t)=0 & x \in (0,L), ~\rho \in (0,1), ~ t>0,\\
		z(x,0,t)=u(x,t) & x \in (0,L),~ t>0,\\
		z(x,\rho,0)=z_0(x,-\rho h) & x \in (0,L), ~\rho \in (0,1).
	\end{cases}
\end{equation}
Consider also the Hilbert space $\mathcal{H}=L^2(0,L) \times L^2((0,L)\times(0,1))$ with the inner product 
\begin{equation*}
	((u,z), (v,w))_{\mathcal{H}} = \int_0^L uv dx+h\xi \|b\|_\infty \int_{0}^{L}\int_0^1 z(x,\rho)w(x,\rho) dxd\rho.
\end{equation*}
Rewriting system \eqref{J9.1} as a first order system
\begin{equation}\label{J45}
	\begin{cases}
		\displaystyle \frac{\partial U}{\partial t}(t)=\mathcal{A}_0U(t)  \\
		U(0)={}  (u_{0}(x), z_0(x,-\rho h )),
	\end{cases}
\end{equation}
with the unbounded operator $\mathcal{A}_0:\mathscr{D}(\mathcal{A}) \subset \mathcal{H} \rightarrow \mathcal{H}$ given by
\begin{equation}\label{A0}
	\mathcal{A}_0(u,z)=(-u_x-u_{xxx}+u_{xxxxx}-a(x)u-\xi b(x)u -b(x) z(\cdot,1),-h^{-1}z_\rho)
\end{equation}
with domain
\begin{equation}\label{2.130}
	\mathscr{D}(\mathcal{A}_0)  = \left\{(u,z) \in \mathcal{H}: 
	\begin{array}{c}
		\displaystyle	u \in H^5(0,L), u(0)=u(L)=u_x(0)=u_x(L)=u_{xx}(L)=0,\\
		\displaystyle  z_\rho \in L^2((0,L)\times (0,1)),	z(0)=u,
	\end{array}
	\right\}
\end{equation}
so the following result holds.

\begin{theorem}\label{T4.2J}
	Assume that $a$ and $b$ are nonnegative functions in $L^\infty(0,L)$ with $b(x) \geq b_0>0$ in $\omega$, $U_0 \in \mathcal{H}$ and $\xi>1$. Then, there exists a unique mild solution $U \in C([0,\infty),\mathcal{H})$ for system \eqref{J45}. Moreover, if $U_0 \in \mathscr{D}(\mathcal{A}_0)$, then the solution is classical and satisfies
	\begin{equation*}
		U \in C([0,\infty),\mathscr{D}(\mathcal{A}_0)) \cap C^1([0,\infty),\mathcal{H}).
	\end{equation*}
\end{theorem}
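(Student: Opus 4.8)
The plan is to recognize that, just as in Section \ref{Sec2}, system \eqref{J44} has been recast as the abstract linear Cauchy problem \eqref{J45} with generator candidate $\mathcal{A}_0$ given by \eqref{A0}--\eqref{2.130}, and then to apply the Lumer--Phillips generation theorem in exactly the form used for Proposition \ref{Agi}: if $\mathcal{A}_0-\lambda I$ is a densely defined closed linear operator on the Hilbert space $\mathcal{H}$ and both $\mathcal{A}_0-\lambda I$ and $(\mathcal{A}_0-\lambda I)^*$ are dissipative, then $\mathcal{A}_0$ is the infinitesimal generator of a $C_0$-semigroup $(e^{t\mathcal{A}_0})$ with $\|e^{t\mathcal{A}_0}\|_{\mathcal{L}(\mathcal{H})}\le e^{\lambda t}$ (see \cite[Corollary 4.4]{Pazy} and the remark preceding \cite[Corollary 3.8]{Pazy}). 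Once this is in hand, the existence and uniqueness of the mild solution $U\in C([0,\infty),\mathcal{H})$ for $U_0\in\mathcal{H}$, and the improved regularity $U\in C([0,\infty),\mathscr{D}(\mathcal{A}_0))\cap C^1([0,\infty),\mathcal{H})$ for $U_0\in\mathscr{D}(\mathcal{A}_0)$, are the standard facts about classical solutions of abstract linear Cauchy problems.

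First I would compute $(\mathcal{A}_0U,U)_{\mathcal{H}}$ for $U=(u,z)\in\mathscr{D}(\mathcal{A}_0)$. Integrating by parts and using the boundary conditions in \eqref{2.130}, the dispersive part $-u_x-u_{xxx}+u_{xxxxx}$ contributes only the boundary term $-\tfrac12 u_{xx}^{2}(0)$, the damping terms give $-\int_0^L a(x)u^2\,dx-\xi\int_0^L b(x)u^2\,dx$, and the transport component $-h^{-1}z_\rho$ paired against the weighted part $h\xi\|b\|_\infty\int_0^L\int_0^1 z w\,d\rho\,dx$ of the inner product produces $\tfrac{\xi\|b\|_\infty}{2}\int_0^L\bigl(u^2(x)-z^2(x,1)\bigr)dx$. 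Applying Young's inequality to the cross term $-\int_0^L b(x)z(1)u\,dx\le\tfrac12\int_0^L b(x)u^2\,dx+\tfrac12\int_0^L b(x)z^2(x,1)\,dx$ and using $\xi>1$ to absorb both the $b(x)u^2$ and the $b(x)z^2(x,1)$ contributions into the already negative terms, one arrives at $(\mathcal{A}_0U,U)_{\mathcal{H}}\le\tfrac{\xi\|b\|_\infty}{2}\|U\|_{\mathcal{H}}^{2}$, i.e. $\bigl((\mathcal{A}_0-\lambda I)U,U\bigr)_{\mathcal{H}}\le0$ with $\lambda=\tfrac{\xi\|b\|_\infty}{2}$. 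Next I would identify $\mathcal{A}_0^{*}$ and its domain by the same integration-by-parts computation used for the Lemma in Section \ref{Sec2}: the interior operator changes sign in the dispersive and transport parts, the damping coefficient becomes $-a(x)$, a term $+\tfrac{\xi\|b\|_\infty}{h}z(\cdot,0)$ appears, and the adjoint boundary data become $u_{xx}(0)=0$ together with $z(x,1)=-\tfrac{b(x)h}{\xi\|b\|_\infty}u(x)$; running the energy identity once more, with $\xi>1$ again playing the decisive algebraic role, yields $\bigl((\mathcal{A}_0-\lambda I)^{*}U,U\bigr)_{\mathcal{H}}\le0$ for all $U\in\mathscr{D}(\mathcal{A}_0^{*})$. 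Density of $\mathscr{D}(\mathcal{A}_0)$ in $\mathcal{H}$ is immediate, and closedness follows from $\mathcal{A}_0^{**}=\mathcal{A}_0$ via \cite[Proposition 2.17]{Brezis}, exactly as in Section \ref{Sec2}; this completes the hypotheses of the generation theorem.

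The main obstacle is the two dissipativity estimates, and in particular the one for $\mathcal{A}_0^{*}$: one must carry the fifth-order term through the integration by parts carefully so that precisely $-\tfrac12 u_{xx}^{2}(0)$ (respectively $-\tfrac12 u_{xx}^{2}(L)$ for the adjoint) survives from the boundary, and then the delicate point is to check that the hypothesis $\xi>1$ is exactly what is needed to render the feedback and transport cross terms nonpositive after Young's inequality — this is the analogue here of the role played by condition \eqref{2.7} in Proposition \ref{Agi}. Identifying the correct adjoint boundary condition relating $z(x,1)$ and $u(x)$ is the other place requiring care; the remaining arguments are routine and parallel those already carried out in Section \ref{Sec2} and in the proof of Proposition \ref{P8Ja}.
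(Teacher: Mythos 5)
Your proposal follows the paper's route exactly: shift $\mathcal{A}_0$ by $\lambda I$, verify dissipativity of the shifted operator and of its adjoint via the integration-by-parts energy computation (with $\xi>1$ absorbing the feedback cross terms), and invoke \cite[Corollary 4.4]{Pazy} together with the remark preceding Corollary 3.8 there; the discrepancy between your shift $\lambda=\tfrac{\xi\|b\|_\infty}{2}$ and the paper's $\lambda=\tfrac{(1+\xi)}{2}\|b\|_\infty$ is immaterial. One correction is needed in your identification of the adjoint: since the Appendix inner product already carries the weight $h\xi\|b\|_\infty$ (unlike the Section \ref{Sec2} product, whose weight $\xi\|a\|_\infty$ has no $h$), the adjoint here has no $h$-factors --- its first component contains $+\xi\|b\|_\infty z(\cdot,0)$, not $+\tfrac{\xi\|b\|_\infty}{h}z(\cdot,0)$, and the domain relation is $z(x,1)=-\tfrac{b(x)}{\xi\|b\|_\infty}u(x)$, as in \eqref{A0star}--\eqref{2.140}; you imported the Section \ref{Sec2} formulas, which belong to the other inner product, and as written they are inconsistent with the $h$-weighted product you use for the direct estimate. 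Once the adjoint is written consistently with that product, your dissipativity computation for $(\mathcal{A}_0-\lambda I)^{*}$ goes through exactly as you describe: the term $\tfrac{\xi\|b\|_\infty}{2}\int_0^L z^2(x,0)\,dx$ produced by Young's inequality is cancelled by the transport contribution, the boundary relation turns $z^2(x,1)$ into a multiple of $b^2u^2$ absorbed by $-\xi\int_0^L b\,u^2\,dx$ for $\xi>1$, and the proof matches the paper's.
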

\begin{proof}
Let $U=(u,z) \in \mathscr{D}(\mathcal{A}_0)$, then we have
\begin{align*}
	(\mathcal{A}_0 U,U)
%	=&-u_{xx}^{2}(0)-\int_{0}^{L} a(x)u^2(x)dx-\int_{0}^{L}\xi b(x)u^2(x)dx-\int_{0}^{L}b(x)z(x,1)udx\\
%	
%	&-\frac{\xi \|b\|_\infty}{2}\int_{0}^{L}z^2(x,1)dx+\frac{\xi \|b\|_\infty}{2}\int_{0}^{L}u^2(x)dx\\
%\leq & -u_{xx}^{2}(0)-\int_{0}^{L}a(x)u^2(x)dx-\int_{0}^{L}\xi b(x)u^2(x)dx+\frac{1}{2}\int_{0}^{L}b(x)u^2(x)dx\\
%&+\frac{1}{2}\int_{0}^{L}b(x)z^2(x,1)dx-\frac{\xi \|b\|_\infty}{2}\int_{0}^{L}z^2(x,1)dx+\frac{\xi \|b\|_\infty}{2}\int_{0}^{L}u^2(x)dx\\
%=& -u_{xx}^{2}(0)-\int_{0}^{L}a(x)u^2(x)dx+\frac{1}{2}\int_{0}^{L} (\xi \|b\|_\infty -2 \xi b(x)+b(x))u^2(x)dx\\
%&+\frac{1}{2}\int_{0}^{L}(b(x)-\xi \|b\|_\infty)z^2(x,1)dx\\
\leq & \frac{(1+\xi)}{2} \|b\|_\infty \int_{0}^{L}\int_{0}^{L}u^2(x)dx.
\end{align*}
It is note difficult to prove that the adjoint of $\mathcal{A}_0$ denoted by $\mathcal{A}^*_0$ is defined by
	\begin{equation}\label{A0star}
	\mathcal{A}_{0}^{*}(u,z)=(u_x+u_{xxx}-u_{xxxxx}-a(x)u-\xi b(x)u+\xi\|b\|_\infty z(\cdot,0),h^{-1}z_\rho)
\end{equation}
with domain
\begin{equation}\label{2.140}
	\mathscr{D}(\mathcal{A}_{0}^{*})  = \left\{(u,z) \in \mathcal{H}: 
	\begin{array}{c} \displaystyle
		u \in H^5(0,L),  u(0)=u(L)=u_x(0)=u_x(L)=u_{xx}(0)=0,\\
		\displaystyle z_\rho \in L^2((0,L) \times (0,1)), z(x,1)=-\frac{b(x) }{\xi \|b\|_\infty }u(x)
	\end{array}
	\right\}. 
\end{equation}
Let $U=(u,z) \in \mathscr{D}(\mathcal{A}_{0}^{*})$, then
\begin{align*}
(\mathcal{A}_{0}^{*}U,U)_{\mathcal{H}} 
%=& \int_{0}^{L}(u_x+u_{xxx}-u_{xxxxx}-a(x)u-\xi b(x)u+\xi\|b\|_\infty z(\cdot,0))udx+\xi\|b\|_{\infty}\int_{0}^{L}\int_{0}^{1}z z_\rho d\rho dx\\
%=& \int_{0}^{L}u_xudx+\int_{0}^{L}u_{xxx}udx-\int_{0}^{L}a(x)u^2(x)dx-\int_{0}^{L}\xi b(x)u^2(x)dx\\
%&+\int_{0}^{L}\xi \|b\|_\infty z(x,0)u(x)dx-\int_{0}^{L}u_{5x}udx+\frac{\xi\|b\|_\infty}{2}\int_{0}^{L}[z^2(x,1)-z^2(x,0)]dx\\
%=& \frac{1}{2}\int_{0}^{L}(u^2)_{x}dx+u_{xx}u|_{0}^{L}-\int_{0}^{L}u_{xx}u_xdx-\int_{0}^{L}a(x)u^2(x)dx-\int_{0}^{L}\xi b(x)u^2(x)dx\\
%&+\int_{0}^{L}\xi \|b\|_\infty z(x,0)u(x)dx+\int_{0}^{L}u_{4x}u_xdx \\
%&+\frac{\xi\|b\|_\infty}{2}\int_{0}^{L} \left(-\frac{b(x) }{\xi \|b\|_\infty }u(x)\right)^2dx-\frac{\xi\|b\|_\infty}{2}\int_{0}^{L}z^2(x,0)dx\\
%\leq & -\int_{0}^{L}a(x)u^2(x)dx-\int_{0}^{L}\xi b(x)u^2(x)dx+\frac{\xi\|b\|_\infty}{2}\int_{0}^{L}u^2(x)dx+\frac{\xi\|b\|_\infty}{2}\int_{0}^{L}z^2(x,0)dx\\
%&-\int_{0}^{L} u_{xxx}u_{xx}dx+\frac{\xi\|b\|_\infty}{2}\int_{0}^{L}\frac{b^2(x)}{\xi^2 \|b\|_{\infty}^{2}}u^2(x)dx-\frac{\xi\|b\|_\infty}{2}\int_{0}^{L}z^2(x,0)dx\\
%=& -\int_{0}^{L}a(x)u^2(x)dx-\int_{0}^{L}\xi b(x)u^2(x)dx+\frac{\xi\|b\|_\infty}{2}\int_{0}^{L}u^2(x)dx+\frac{\xi\|b\|_\infty}{2}\int_{0}^{L}z^2(x,0)dx\\
%&-\frac{1}{2}\int_{0}^{L} (u_{xx}^2)_xdx+\frac{1}{2\xi\|b\|_\infty}\int_{0}^{L}b^2(x)u^2(x)dx-\frac{\xi\|b\|_\infty}{2}\int_{0}^{L}z^2(x,0)dx\\
%\leq& -\frac{1}{2}u_{xx}^{2}(L)-\int_{0}^{L}a(x)u^2(x)dx-\int_{0}^{L}\xi b(x)u^2(x)dx\\&+\frac{\xi\|b\|_\infty}{2}\int_{0}^{L}u^2(x)dx+\frac{1}{2\xi}\int_{0}^{L}b(x)u^2(x)dx\\
\leq & \frac{(1+\xi)}{2} \|b\|_\infty \int_{0}^{L}u^2(x)dx.
\end{align*}
Hence, for $\lambda=\frac{(1+\xi)}{2} \|b\|_\infty$,
\begin{equation*}
((\mathcal{A}_0-\lambda I)U,U)_{\mathcal{H}} \leq 0	 \hbox{ and } ((\mathcal{A}_0-\lambda I)^*V,V)_{\mathcal{H}} \leq 0,
\end{equation*}
for all $U \in \mathscr{D}(\mathcal{A}_0)$ and $V \in \mathscr{D}(\mathcal{A}_{0}^{*})$. Finally, since $\mathcal{A}_0-\lambda I$ is a densely defined closed linear operator, and both $\mathcal{A}_0-\lambda I$ and $(\mathcal{A}_0-\lambda I)^*$ are dissipative, then $\mathcal{A}_0$ is the infinitesimal generator of a $C_0$-semigroup on $\mathcal{H}$ (see for instance Corollary 4.4 and remark before Corollary 3.8 in \cite{Pazy}).
\end{proof}

\subsection{Exponential stability of the auxiliary system}
We denote by $\{e^{\mathcal{A}_0t}, t \geq 0\}$ the $C_0$-semigroup associated with $\mathcal{A}_0$. To prove the exponential stability of the system \eqref{J44}, we closely follow the Subsection \ref{ss3.1}. Precisely, we choose the following Lyapunov functional
\begin{equation}\label{J46}
V(t)=E(t)+\alpha V_1(t)+\beta V_2(t),
\end{equation}
where $\alpha$ and $\beta$ are positive constants that will be fixed small enough later on, $E$ is the energy defined by \eqref{J43}, $V_1$ is defined by \eqref{J27} and $V_2$ is defined by
\begin{equation}\label{J47}
V_2(t)=\frac{h}{2}\int_{0}^{L}\int_{0}^{1}(1-\rho)b(x)u^2(x,t-\rho h)d\rho dx.
\end{equation}
It is clear that the two energies $E$ and $V$ are equivalent, in the sense that
\begin{equation}\label{J48}
E(t)\leq V(t) \leq \left(1+\max\left\{2\alpha L, \frac{\beta}{\xi}\right\}\right)E(t).
\end{equation}
Thus, the following result gives a positive answer for the exponential stability to the system \eqref{J44}.

\begin{proposition}\label{P7J}
	Assume that $a$ and $b$ are nonnegative function in $L^\infty(0,L)$, $b(x)\geq b_0>0$ in $\omega$, $L<\pi \sqrt{3}$ and $\xi>1$. Then, for every $(u_0,z_0(\cdot,-h(\cdot))) \in \mathcal{H}$, the energy of system \eqref{J44}, denoted by $E$ and defined by \eqref{J43}, decays exponentially. More precisely, there exists two positive constants $\gamma$ and $\kappa$ such that
	\begin{equation*}
		E(t)\leq \kappa E(0) e^{-2\gamma t} \hbox{ for all }  t>0,
	\end{equation*}
	with 
	\begin{align*}
		\gamma \leq & \min\left\{\frac{(3\pi^2-L^2)\alpha}{L^2(1+2\alpha L)},\frac{\beta }{2h(\xi+\beta)} \right\}\\
		\kappa = & \left(1+\max\left\{2\alpha L,\frac{\beta}{\xi}\right\}\right),
	\end{align*}
	where $\alpha$ and $\beta$ are positive constants such that
	\begin{align*}
		\alpha<&\frac{\xi-1}{2L(1+2\xi)},\\
		\beta<& \xi-1-2\alpha L(1+2\xi).
	\end{align*}
\end{proposition}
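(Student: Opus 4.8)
The plan is to carry out the Lyapunov argument of Subsection~\ref{ss3.1} for the \emph{linear} system \eqref{J44}, using the functional $V=E+\alpha V_1+\beta V_2$ from \eqref{J46}--\eqref{J47}. Since \eqref{J44} is linear, the cubic term $\tfrac23\int_0^L u^3\,dx$ that occurs in \eqref{J32} is now absent, which only simplifies the bookkeeping. I would first work with classical solutions, i.e.\ $U_0\in\mathscr D(\mathcal A_0)$ (their existence and regularity being provided by Theorem~\ref{T4.2J}), and extend the estimate to all of $\mathcal H$ at the end by density of $\mathscr D(\mathcal A_0)$ in $\mathcal H$.

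The first task is to produce the three differential identities. Multiplying $\eqref{J44}_1$ by $u$, integrating over $(0,L)$, integrating by parts using the boundary conditions, combining with the transport equation for $z(x,\rho,t)=u(x,t-\rho h)$, and using Young's inequality on the term $\int_0^L b(x)u(x,t)u(x,t-h)\,dx$ together with $\xi>1$, one obtains, exactly as in the computation preceding Subsection~\ref{ss3.1},
\begin{align*}
E'(t)\leq{}&-\tfrac12 u_{xx}^2(0,t)-\int_0^L a(x)u^2(x,t)\,dx\\
&+\tfrac{1-\xi}{2}\int_0^L b(x)u^2(x,t)\,dx+\tfrac{1-\xi}{2}\int_0^L b(x)u^2(x,t-h)\,dx.
\end{align*}
Multiplying $\eqref{J44}_1$ by the Morawetz multiplier $xu$ and integrating by parts gives
\begin{align*}
V_1'(t)={}&\int_0^L u^2\,dx-3\int_0^L u_x^2\,dx-5\int_0^L u_{xx}^2\,dx\\
&-2\int_0^L xa(x)u^2\,dx-2\int_0^L xb(x)u(x,t)u(x,t-h)\,dx-2\xi\int_0^L xb(x)u^2\,dx.
\end{align*}
Finally, differentiating $V_2$, using the transport equation and integrating by parts in $\rho$ as for \eqref{J33}, one gets
\begin{equation*}
V_2'(t)=\tfrac12\int_0^L b(x)u^2(x,t)\,dx-\tfrac12\int_0^L\!\int_0^1 b(x)u^2(x,t-\rho h)\,d\rho\,dx.
\end{equation*}

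Next I would assemble $V'(t)+2\gamma V(t)$ from these three identities. Bounding $V_1(t)\leq L\int_0^L u^2\,dx$ and $V_2(t)\leq\tfrac h2\int_0^L\!\int_0^1 b(x)u^2(x,t-\rho h)\,d\rho\,dx$, invoking the Poincaré inequality $\int_0^L u^2\,dx\leq\tfrac{L^2}{\pi^2}\int_0^L u_x^2\,dx$ (valid because $u(0)=u(L)=0$), and absorbing the cross terms in $u(x,t)u(x,t-h)$ by Young's inequality with suitably tuned parameters, one reduces $V'+2\gamma V$ to a linear combination of $u_{xx}^2(0)$, $\int_0^L a u^2$, $\int_0^L u_x^2$, $\int_0^L u_{xx}^2$, $\int_0^L b u^2(\cdot,t)$, $\int_0^L b u^2(\cdot,t-h)$ and $\int_0^L\!\int_0^1 b u^2(\cdot,t-\rho h)$. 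The hypotheses then make each coefficient nonpositive: the restriction $L<\pi\sqrt3$ (so $3\pi^2-L^2>0$) together with $\gamma\leq\tfrac{(3\pi^2-L^2)\alpha}{L^2(1+2\alpha L)}$ controls the $\int u_x^2$ coefficient; $\alpha<\tfrac{\xi-1}{2L(1+2\xi)}$ and $\beta<\xi-1-2\alpha L(1+2\xi)$ control the coefficients of the two $b$-terms at times $t$ and $t-h$; $\gamma\leq\tfrac{\beta}{2h(\xi+\beta)}$ controls the distributed-delay coefficient; and the $\int a u^2$ and $\int u_{xx}^2$ terms are already nonpositive and are discarded. Hence $V'(t)+2\gamma V(t)\leq0$, so $V(t)\leq V(0)e^{-2\gamma t}$, and the equivalence \eqref{J48} yields $E(t)\leq V(t)\leq\kappa E(0)e^{-2\gamma t}$ with $\kappa=1+\max\{2\alpha L,\beta/\xi\}$. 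A density argument then gives the same conclusion for every $(u_0,z_0(\cdot,-h(\cdot)))\in\mathcal H$.

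The three identities are routine integration-by-parts computations, so the only genuinely delicate point is the combination step: one must split the cross terms by Young's inequality with just the right weights so that precisely the stated constraints on $\alpha$, $\beta$ and $\gamma$ (and nothing stronger) suffice to render every coefficient nonpositive. Incidentally, the condition $b\geq b_0>0$ on $\omega$ plays no role in this argument --- the dissipation is produced entirely by the zero-order term $\xi b(x)u$ with $\xi>1$ --- it is inherited from the well-posedness statement, Theorem~\ref{T4.2J}.
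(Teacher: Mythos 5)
Your proposal is correct and follows essentially the same route as the paper: the Lyapunov functional $V=E+\alpha V_1+\beta V_2$ of \eqref{J46}--\eqref{J47}, the three identities for $E'$, $V_1'$, $V_2'$, Young and Poincaré to make every coefficient in $V'+2\gamma V$ nonpositive under the stated constraints on $\alpha,\beta,\gamma$, then the equivalence \eqref{J48} and density. Your side remark that $b\geq b_0$ on $\omega$ is not used in this estimate is also consistent with the paper's argument.
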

\begin{proof}
Let $u$ be a solution of \eqref{J44} with $(u_0,z_0(\cdot,-h(\cdot))) \in \mathscr{D}(\mathcal{A}_0)$. Differentiating \eqref{J27} and using the first equation of \eqref{J44},  we have that
\begin{align*}
	\frac{d}{dt}V_1(t)=&-3\int_{0}^{L}u_{x}^{2}(x,t)dx+\int_{0}^{L}u^2(x,t)dx-5\int_{0}^{L}u_{xx}^{2}(x,t)dx\\
	&-2\int_{0}^{L}xa(x)u^2(x,t)dx-2\int_{0}^{L}xb(x)u(x,t)u(x,t-h)dx-2\int_{0}^{L}x\xi b(x)u^2(x,t)dx.
\end{align*}
Moreover, differentiating \eqref{J47}, using integration by parts, we obtain
\begin{align*}
	\frac{d}{dt}V_2(t)
	%=&h\int_{0}^{L}\int_{0}^{1}(1-\rho)b(x)u(x,t-\rho h)u_t(x,t-\rho h)d\rho dx\\
%	=&-\int_{0}^{L}\int_{0}^{1}(1-\rho)b(x)u(x,t-\rho h)u_\rho(x,t-\rho h)d\rho dx\\
%	=&-\int_{0}^{L}b(x)\int_{0}^{1}(1-\rho)u(x,t-\rho h)u_\rho(x,t-\rho h)d\rho dx\\
%	=&-\frac{1}{2}\int_{0}^{L}b(x)\left[-u^2(x,t)+\int_{0}^{1}u^2(x,t-\rho h)d\rho \right]dx\\
	=& \frac{1}{2}\int_{0}^{L}b(x)u^2(x,t)dx-\frac{1}{2}\int_{0}^{L}\int_{0}^{1}b(x)u^2(x,t-\rho h)d\rho dx.
\end{align*}
Consequently, for any $\gamma>0$, we get
%
%\begin{align*}
%	\frac{d}{dt}V(t)+2 \gamma V(t)
%%	\leq &-u_{xx}^{2}(0)-\int_{0}^{L}a(x)u^2(x,t)dx+\frac{1}{2}\int_{0}^{L}(b(x)-\xi b(x))u^2(x,t)dx\\
%%	&+\frac{1}{2}\int_{0}^{L}(b(x)-\xi b(x))u^2(x,t-h)dx \\
%%	&-3\alpha\int_{0}^{L}u_{x}^{2}(x,t)dx+\alpha\int_{0}^{L}u^2(x,t)dx-5\alpha\int_{0}^{L}u_{xx}^{2}(x,t)dx\\
%%	&-2\alpha\int_{0}^{L}xa(x)u^2(x,t)dx-2\alpha\int_{0}^{L}xb(x)u(x,t)u(x,t-h)dx-2\alpha\int_{0}^{L}x\xi b(x)u^2(x,t)dx\\
%%	&+\frac{\beta}{2}\int_{0}^{L}b(x)u^2(x,t)dx-\frac{\beta}{2}\int_{0}^{L}\int_{0}^{1}b(x)u^2(x,t-\rho h)d\rho dx\\
%%	&+\gamma\int_{0}^{L}u^2(x,t)dx+\gamma \xi h \int_{0}^{L}\int_{0}^{1}b(x)u^2(x,t-\rho h)d\rho dx\\
%%	&+2\gamma \alpha \int_{0}^{L}xu^2(x,t)dx+\beta \gamma h\int_{0}^{L}\int_{0}^{1}(1-\rho)b(x)u^2(x,t-\rho h)d\rho dx\\
%	\leq & -\frac{1}{2}u_{xx}^{2}(0)-\int_{0}^{L}a(x)u^2(x,t)dx\\&+\frac{1}{2}\int_{0}^{L}b(x)(1-\xi)u^2(x,t)dx+\frac{1}{2}\int_{0}^{L}b(x)(1-\xi)u^2(x,t-h)dx\\
%	&-3\alpha\int_{0}^{L}u_{x}^{2}(x,t)dx+\alpha\int_{0}^{L}u^2(x,t)dx-5\alpha\int_{0}^{L}u_{xx}^{2}(x,t)dx-2\alpha\int_{0}^{L}xa(x)u^2(x,t)dx\\
%	&+\alpha \int_{0}^{L}xb(x)u^2(x,t)dx+\alpha \int_{0}^{L}xb(x)u^2(x,t-h)dx-2\alpha\int_{0}^{L}x\xi b(x)u^2(x,t)dx\\
%	&+ \frac{\beta}{2}\int_{0}^{L}b(x)u^2(x,t)dx-\frac{\beta}{2}\int_{0}^{L}\int_{0}^{1}b(x)u^2(x,t-\rho h)d\rho dx+\gamma \int_{0}^{L}u^2(x,t)dx\\
%	&+\gamma \xi h\int_{0}^{L}\int_{0}^{1}b(x)u^2(x,t-\rho h)d\rho dx+2\alpha \gamma \int_{0}^{L}x u^2(x,t)dx\\
%	&+\beta \gamma h \int_{0}^{L}\int_{0}^{1}(1-\rho)b(x)u^2(x,t-\rho h)d\rho dx.
%\end{align*}
%Hence,
\begin{align*}
	\frac{d}{dt}V(t)+2 \gamma V(t)
%	\leq & -\frac{1}{2}u_{xx}^{2}(0)-\int_{0}^{L}a(x)(1+2\alpha x)u^2(x,t)dx\\&+\frac{1}{2}\int_{0}^{L}b(x)(1-\xi+\beta+2\alpha L+4\alpha L \xi)u^2(x,t)dx\\
%	&+\frac{1}{2}\int_{0}^{L}b(x)(1-\xi + 2\alpha L)u^2(x,t-h)dx+(\alpha + \gamma +2 \alpha \gamma L)\int_{0}^{L}u^2(x,t)dx\\
%	&-3\alpha \int_{0}^{L}u_{x}^{2}(x,t)dx-5\alpha \int_{0}^{L}u_{xx}^{2}(x,t)dx+\int_{0}^{L}\int_{0}^{1}b(x)\left(\gamma \xi h+\gamma \beta h -\frac{\beta}{2}\right)u^2(x,t-h)d\rho dx\\
	\leq & \frac{1}{2}\int_{0}^{L}b(x)(1-\xi+\beta+2\alpha L(1+2\xi))u^2(x,t)dx \\&+\frac{1}{2}\int_{0}^{L}b(x)(1-\xi + 2\alpha L)u^2(x,t-h)dx\\
	&+\left(\frac{L^2}{\pi^2}(\alpha + \gamma +2 \alpha \gamma L)-3\alpha \right)\int_{0}^{L}u_{x}^{2}(x,t)dx\\
	&+\int_{0}^{L}\int_{0}^{1}b(x)\left(\gamma \xi h+\gamma \beta h -\frac{\beta}{2}\right)u^2(x,t-h)d\rho dx.
\end{align*}
Therefore, for $\alpha, \beta$ and $\gamma$ chosen as in the statement of proposition we have
\begin{equation}\label{Vn0}
	V'(t)+2\gamma V(t) \leq 0.
\end{equation}
From \eqref{J48} and \eqref{Vn0}, we obtain
\begin{equation*}
	E(t) \leq V(t) \leq e^{-2\gamma t }V(0) \leq \left(1+\max\left\{2\alpha L,\frac{\beta}{\xi}\right\}\right)e^{-2\gamma t}E(0), \hbox{ for all } t>0.
\end{equation*}
By the density of $\mathscr{D}(\mathcal{A})$ in $\mathcal{H}$ the result extend to arbitraty $(u_0,z_{0}(\cdot,-h(\cdot))) \in \mathcal{H}$.
\end{proof}

\subsection*{Acknowledgments} Capistrano–Filho was supported by CNPq grant 408181/2018-4, CAPES- PRINT grant 88881.311964/2018-01, MATHAMSUD grants 88881.520205/2020-01, 21-MATH-03 and Propesqi (UFPE).  Gonzalez Martinez was supported by FACEP grant BFP-0065-1.01/21.  This work was done during the postdoctoral visit of the second author at the Universidade Federal de Pernambuco, who thanks the host institution for the warm hospitality.

\end{document}